\newcommand{\db}[2]{#1\!\leftrightarrow\!#2}
\newcommand{\ru}[1]{\textbf{R#1}}
\newtheorem{theorem}{Theorem}
\newtheorem{corollary}[theorem]{Corollary}
\newtheorem{definition}[theorem]{Definition}
\newtheorem{proposition}[theorem]{Proposition}
\crefname{proposition}{Proposition}{Propositions}
\newtheorem{lemma}[theorem]{Lemma}
\newtheorem{conjecture}[theorem]{Conjecture}
\newtheorem{observation}[theorem]{Observation}
\newtheorem{claim}[theorem]{Claim}
\crefname{claim}{claim}{claims}
\Crefname{claim}{Claim}{Claims}
\renewcommand{\thesubfigure}{\roman{subfigure}}
\DeclareMathOperator{\ad}{ad}
\DeclareMathOperator{\mad}{mad}
\title{$2$-distance $(\Delta+1)$-coloring of sparse graphs using the potential method}
\author[1]{Hoang La\thanks{xuan-hoang.la@lirmm.fr}}
\author[1]{Mickael Montassier\thanks{mickael.montassier@lirmm.fr}}
\affil[1]{LIRMM, Université de Montpellier, CNRS, Montpellier, France}
\begin{document}
  \maketitle

\begin{abstract}
A $2$-distance $k$-coloring of a graph is a proper $k$-coloring of the vertices where vertices at distance at most 2 cannot share the same color. We prove the existence of a $2$-distance ($\Delta+1$)-coloring for graphs with maximum average degree less than $\frac{18}{7}$ and maximum degree $\Delta\geq 7$. As a corollary, every planar graph with girth at least $9$ and $\Delta\geq 7$ admits a $2$-distance $(\Delta+1)$-coloring. The proof uses the potential method to reduce new configurations compared to classic approaches on $2$-distance coloring.
\end{abstract}

\section{Introduction}

A \emph{$k$-coloring} of the vertices of a graph $G=(V,E)$ is a map $\phi:V \rightarrow\{1,2,\dots,k\}$. A $k$-coloring $\phi$ is a \emph{proper coloring}, if and only if, for all edge $xy\in E,\phi(x)\neq\phi(y)$. In other words, no two adjacent vertices share the same color. The \emph{chromatic number} of $G$, denoted by $\chi(G)$, is the smallest integer $k$ such that $G$ has a proper $k$-coloring.  A generalization of $k$-coloring is $k$-list-coloring.
A graph $G$ is {\em $L$-list colorable} if for a
given list assignment $L=\{L(v): v\in V(G)\}$ there is a proper
coloring $\phi$ of $G$ such that for all $v \in V(G), \phi(v)\in
L(v)$. If $G$ is $L$-list colorable for every list assignment $L$ with $|L(v)|\ge k$ for all $v\in V(G)$, then $G$ is said to be {\em $k$-choosable} or \emph{$k$-list-colorable}. The \emph{list chromatic number} of a graph $G$ is the smallest integer $k$ such that $G$ is $k$-choosable. List coloring can be very different from usual coloring as there exist graphs with a small chromatic number and an arbitrarily large list chromatic number.

In 1969, Kramer and Kramer introduced the notion of 2-distance coloring \cite{kramer2,kramer1}. This notion generalizes the ``proper'' constraint (that does not allow two adjacent vertices to have the same color) in the following way: a \emph{$2$-distance $k$-coloring} is such that no pair of vertices at distance at most 2 have the same color (similarly to proper $k$-list-coloring, one can also define \emph{$2$-distance $k$-list-coloring}). The \emph{$2$-distance chromatic number} of $G$, denoted by $\chi^2(G)$, is the smallest integer $k$ so that $G$ has a 2-distance $k$-coloring.

For all $v\in V$, we denote $d_G(v)$ the degree of $v$ in $G$ and by $\Delta(G) = \max_{v\in V}d_G(v)$ the maximum degree of a graph $G$. For brevity, when it is clear from the context, we will use $\Delta$ (resp. $d(v)$) instead of $\Delta(G)$ (resp. $d_G(v)$). 
One can observe that, for any graph $G$, $\Delta+1\leq\chi^2(G)\leq \Delta^2+1$. The lower bound is trivial since, in a 2-distance coloring, every neighbor of a vertex $v$ with degree $\Delta$, and $v$ itself must have a different color. As for the upper bound, a greedy algorithm shows that $\chi^2(G)\leq \Delta^2+1$. Moreover, this bound is tight for some graphs, for example, Moore graphs of type $(\Delta,2)$, which are graphs where all vertices have degree $\Delta$, are at distance at most two from each other, and the total number of vertices is $\Delta^2+1$. See \Cref{tight upper bound figure}.

\begin{figure}[htbp]
\begin{center}
\begin{subfigure}[t]{5cm}
\centering
\begin{tikzpicture}[every node/.style={circle,thick,draw,minimum size=1pt,inner sep=2}]
  \graph[clockwise, radius=1.5cm] {subgraph C_n [n=5,name=A] };
\end{tikzpicture}
\caption{The Moore graph of type (2,2):\\ the odd cycle $C_5$}
\end{subfigure}
\qquad
\begin{subfigure}[t]{5cm}
\centering
\begin{tikzpicture}[every node/.style={circle,thick,draw,minimum size=1pt,inner sep=1}]
  \graph[clockwise, radius=1.5cm] {subgraph C_n [n=5,name=A] };
  \graph[clockwise, radius=0.75cm,n=5,name=B] {1/"6", 2/"7", 3/"8", 4/"9", 5/"10" };

  \foreach \i [evaluate={\j=int(mod(\i+6,5)+1)}]
     in {1,2,3,4,5}{
    \draw (A \i) -- (B \i);
    \draw (B \j) -- (B \i);
  }
\end{tikzpicture}
\caption{The Moore graph of type (3,2):\\ the Petersen graph.}
\end{subfigure}
\qquad
\begin{subfigure}[t]{5cm}
\centering
\includegraphics[scale=0.12]{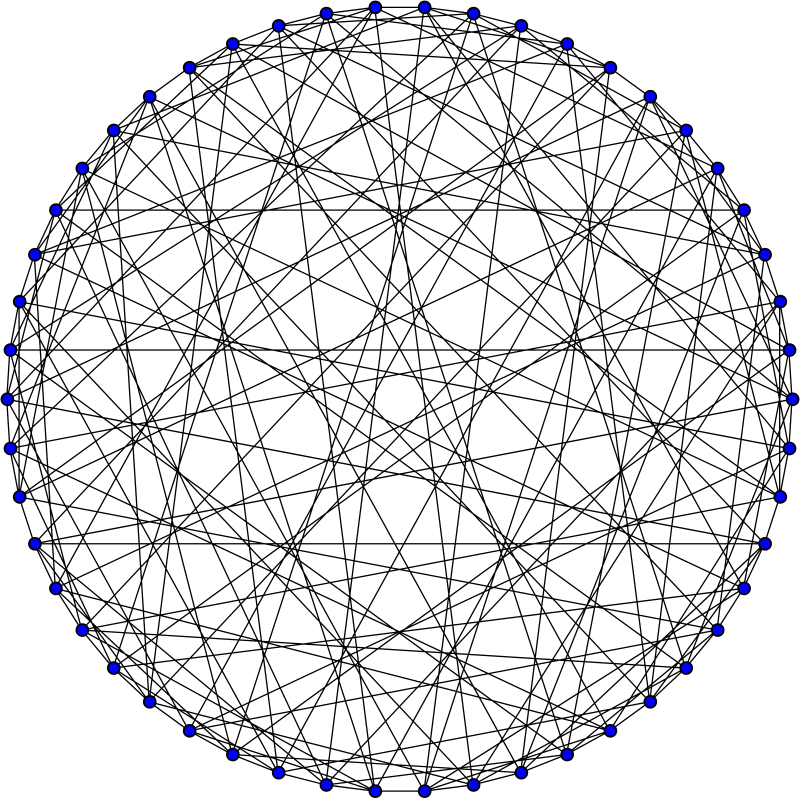}
\caption{The Moore graph of type (7,2):\\ the Hoffman-Singleton graph.}
\end{subfigure}
\caption{Examples of Moore graphs for which $\chi^2=\Delta^2+1$.}
\label{tight upper bound figure}
\end{center}
\end{figure}

By nature, $2$-distance colorings and the $2$-distance chromatic number of a graph depend a lot on the number of vertices in the neighborhood of every vertex. More precisely, the ``sparser'' a graph is, the lower its $2$-distance chromatic number will be. One way to quantify the sparsity of a graph is through its maximum average degree. The \emph{average degree} $\ad$ of a graph $G=(V,E)$ is defined by $\ad(G)=\frac{2|E|}{|V|}$. The \emph{maximum average degree} $\mad(G)$ is the maximum, over all subgraphs $H$ of $G$, of $\ad(H)$. Another way to measure the sparsity is through the girth, i.e. the length of a shortest cycle. We denote $g(G)$ the girth of $G$. Intuitively, the higher the girth of a graph is, the sparser it gets. These two measures can actually be linked directly in the case of planar graphs.

A graph is \emph{planar} if one can draw its vertices with points on the plane, and edges with curves intersecting only at its endpoints. When $G$ is a planar graph, Wegner conjectured in 1977 that  $\chi^2(G)$ becomes linear in $\Delta(G)$:

\begin{conjecture}[Wegner \cite{wegner}]
\label{conj:Wegner}
Let $G$ be a planar graph with maximum degree $\Delta$. Then,
$$
\chi^2(G) \leq \left\{
    \begin{array}{ll}
        7, & \mbox{if } \Delta\leq 3, \\
        \Delta + 5, & \mbox{if } 4\leq \Delta\leq 7,\\
        \left\lfloor\frac{3\Delta}{2}\right\rfloor + 1, & \mbox{if } \Delta\geq 8.
    \end{array}
\right.
$$
\end{conjecture}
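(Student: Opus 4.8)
The plan is to reformulate the problem as an ordinary colouring question, using $\chi^2(G)=\chi(G^2)$ where $G^2$ is the square of $G$, and then to attack the three regimes of \Cref{conj:Wegner} by the discharging method applied to a minimal counterexample. For a fixed target $k(\Delta)$, I would take a planar graph $G$ that is not $2$-distance $k(\Delta)$-colourable and minimises $|V(G)|+|E(G)|$, and first establish the usual structural lemmas: $G$ is connected, has no vertex whose deletion leaves a graph that can be recoloured greedily, and contains no small \emph{reducible} configuration. A configuration is reducible when, after deleting or identifying a bounded number of vertices, the smaller graph is $2$-distance colourable by minimality and the removed vertices can be reinserted one by one, each seeing fewer than $k(\Delta)$ colours in its distance-$2$ ball. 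The potential method used elsewhere in this paper is exactly the tool I would push here to certify reducibility of denser configurations than a naive neighbour-count can handle.

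With enough reducible configurations in hand, I would run a discharging argument. Assigning each vertex $v$ the charge $d(v)-4$ and each face $f$ the charge $\ell(f)-4$, Euler's formula forces a total charge of $-8$. The discharging rules would move charge from high-degree vertices and long faces towards the low-degree vertices and short faces that the reducibility lemmas have shown to be scarce, aiming to prove that every vertex and face ends nonnegative — the desired contradiction. The case $\Delta\le 3$ concerns subcubic planar graphs and the bound $7$; here the list of reducible configurations is short and a direct discharging analysis (in the spirit of Thomassen's resolution of this case) should suffice. The range $4\le\Delta\le 7$ with target $\Delta+5$ is harder, because the additive slack stays fixed as $\Delta$ grows, so the rules must be calibrated against the densest neighbourhoods still permitted by planarity; I would expect to treat these finitely many values with increasingly delicate weightings.

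The case $\Delta\ge 8$, with target $\lfloor\tfrac{3\Delta}{2}\rfloor+1$, is where I expect the decisive difficulty, and it is the main obstacle. The asymptotically correct leading constant $3/2$ is already reachable — Havet, van den Heuvel, McDiarmid and Reed show one can colour $G^2$ with $(\tfrac{3}{2}+o(1))\Delta$ colours by decomposing the planar structure and colouring probabilistically — but the error term is not explicit, while the cleanest finite bounds (Molloy and Salavatipour's $\lceil\tfrac{5\Delta}{3}\rceil+O(1)$) carry the wrong constant. Closing this gap to the exact value $\lfloor\tfrac{3\Delta}{2}\rfloor+1$ for every $\Delta\ge 8$ is precisely what no current method achieves. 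My plan would be to combine the potential-based reducibility with a sharp local analysis of a $\Delta$-vertex's distance-$2$ ball: the configurations that force the $3/2$ threshold behave like the planar near-analogues of Moore graphs, and I would try to show that any ball dense enough to block a greedy extension must embed a non-planar obstruction ($K_5$ or $K_{3,3}$) back in $G$, contradicting planarity. Turning this local-to-global heuristic into a quantitative statement that caps the colours seen at the $3/2$ threshold, rather than the $5/3$ threshold, is the crux — and is exactly the point at which every known attack on Wegner's conjecture currently stalls.
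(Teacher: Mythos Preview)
The statement you are attempting to prove is a \emph{conjecture}, and the paper does not claim to prove it. Wegner's conjecture is stated here only as motivation and context; the paper notes that the case $\Delta\le 3$ has been settled by Thomassen and by Hartke et al., and that the $\Delta\ge 8$ case is known only asymptotically via Havet--van den Heuvel--McDiarmid--Reed. There is no proof in the paper against which to compare your proposal.

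Your write-up is not a proof but a research outline, and you say so yourself: the step ``turning this local-to-global heuristic into a quantitative statement that caps the colours seen at the $3/2$ threshold'' is, in your own words, ``exactly the point at which every known attack on Wegner's conjecture currently stalls.'' That is the genuine gap. The discharging-plus-reducibility framework you describe is the standard machinery, but the specific reducible configurations needed to force the exact constants $\Delta+5$ (for $4\le\Delta\le 7$) and $\lfloor 3\Delta/2\rfloor+1$ (for $\Delta\ge 8$) are not known, and your suggestion that a dense distance-$2$ ball must contain a $K_5$ or $K_{3,3}$ minor is a heuristic, not an argument --- the extremal examples in the paper's Figure~2 are planar and already sit at $\lfloor 3\Delta/2\rfloor\pm 1$, so density at the $3/2$ threshold does not by itself create a non-planar obstruction.
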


The upper bound for the case where $\Delta\geq 8$ is tight (see \Cref{wegner figure}(i)). Recently, the case $\Delta\leq 3$ was proved by Thomassen \cite{tho18}, and by Hartke \textit{et al.} \cite{har16} independently. For $\Delta\geq 8$, Havet \textit{et al.} \cite{havet} proved that the bound is $\frac{3}{2}\Delta(1+o(1))$, where $o(1)$ is as $\Delta\rightarrow\infty$ (this bound holds for 2-distance list-colorings). \Cref{conj:Wegner} is known to be true for some subfamilies of planar graphs, for example $K_4$-minor free graphs \cite{lwz03}.

\begin{figure}[htbp]
\begin{subfigure}[b]{0.48\textwidth}
\centering
\begin{tikzpicture}[scale=0.4]
\begin{scope}[every node/.style={circle,thick,draw,minimum size=1pt,inner sep=2}]
    \node[fill] (y) at (0,0) {};
    \node[fill] (z) at (5,0) {};
    \node[fill] (x) at (2.5,4.33) {};

    \node[fill] (xy) at (1.25,2.165) {};
    \node[fill] (yz) at (2.5,0) {};
    \node[fill] (zx) at (3.75,2.165) {};

    \node[fill,label={[label distance=-1cm]above:$\lfloor\frac{\Delta}{2}\rfloor-1$ vertices}] (xy1) at (-2.5,4.33) {};
    \node[fill] (xy2) at (-1.5625,3.78875) {};
    \node[fill] (xy3) at (-0.625,3.2475) {};

    \node[fill,label={[label distance=-0.7cm]above:$\lceil\frac{\Delta}{2}\rceil$ vertices}] (zx1) at (7.5,4.33) {};
    \node[fill] (zx2) at (6.5625,3.78875) {};
    \node[fill] (zx3) at (5.625,3.2475) {};

    \node[fill,label={[label distance = -0.7cm]below:$\lfloor\frac{\Delta}{2}\rfloor$ vertices}] (yz1) at (2.5,-4.33) {};
    \node[fill] (yz2) at (2.5,-3.2475) {};
    \node[fill] (yz3) at (2.5,-2.165) {};

\end{scope}

\begin{scope}[every edge/.style={draw=black,thick}]
    \path (x) edge (y);
    \path (y) edge (z);
    \path (z) edge (x);

    \path (x) edge[bend left] (y);

    \path (x) edge (xy1);
    \path (x) edge (xy2);
    \path (x) edge (xy3);

    \path (y) edge (xy1);
    \path (y) edge (xy2);
    \path (y) edge (xy3);

    \path (x) edge (zx1);
    \path (x) edge (zx2);
    \path (x) edge (zx3);

    \path (z) edge (zx1);
    \path (z) edge (zx2);
    \path (z) edge (zx3);

    \path (y) edge (yz1);
    \path (y) edge (yz2);
    \path (y) edge (yz3);

    \path (z) edge (yz1);
    \path (z) edge (yz2);
    \path (z) edge (yz3);

    \path[dashed] (xy) edge (xy3);
    \path[dashed] (yz) edge (yz3);
    \path[dashed] (zx) edge (zx3);
\end{scope}
\draw[rotate=-30] (-0.625-1.4,3.2475-0.7) ellipse (3cm and 0.5cm);
\draw[rotate=30] (5+0.625+1,3.2475-3.25) ellipse (3cm and 0.5cm);
\draw[rotate=90] (-2,-2.5) ellipse (3cm and 0.5cm);
\end{tikzpicture}
\vspace{-0.9cm}
\caption{A graph with girth 3 and $\chi^2=\lfloor\frac{3\Delta}{2}\rfloor+1$}
\end{subfigure}
\begin{subfigure}[b]{0.48\textwidth}
\centering
\begin{tikzpicture}[scale=0.4]
\begin{scope}[every node/.style={circle,thick,draw,minimum size=1pt,inner sep=2}]
    \node[fill] (y) at (0,0) {};
    \node[fill] (z) at (5,0) {};
    \node[fill] (x) at (2.5,4.33) {};

    \node[fill] (xy) at (1.25,2.165) {};
    \node[fill] (yz) at (2.5,0) {};
    \node[fill] (zx) at (3.75,2.165) {};

    \node[fill,label={[label distance=-1cm]above:$\lfloor\frac{\Delta}{2}\rfloor-1$ vertices}] (xy1) at (-2.5,4.33) {};
    \node[fill] (xy2) at (-1.5625,3.78875) {};
    \node[fill] (xy3) at (-0.625,3.2475) {};

    \node[fill,label={[label distance=-0.7cm]above:$\lceil\frac{\Delta}{2}\rceil$ vertices}] (zx1) at (7.5,4.33) {};
    \node[fill] (zx2) at (6.5625,3.78875) {};
    \node[fill] (zx3) at (5.625,3.2475) {};

    \node[fill,label={[label distance = -0.7cm]below:$\lfloor\frac{\Delta}{2}\rfloor$ vertices}] (yz1) at (2.5,-4.33) {};
    \node[fill] (yz2) at (2.5,-3.2475) {};
    \node[fill] (yz3) at (2.5,-2.165) {};

\end{scope}

\begin{scope}[every edge/.style={draw=black,thick}]
    \path (x) edge (y);
    \path (y) edge (z);
    \path (z) edge (x);

    \path (x) edge (xy1);
    \path (x) edge (xy2);
    \path (x) edge (xy3);

    \path (y) edge (xy1);
    \path (y) edge (xy2);
    \path (y) edge (xy3);

    \path (x) edge (zx1);
    \path (x) edge (zx2);
    \path (x) edge (zx3);

    \path (z) edge (zx1);
    \path (z) edge (zx2);
    \path (z) edge (zx3);

    \path (y) edge (yz1);
    \path (y) edge (yz2);
    \path (y) edge (yz3);

    \path (z) edge (yz1);
    \path (z) edge (yz2);
    \path (z) edge (yz3);

    \path[dashed] (xy) edge (xy3);
    \path[dashed] (yz) edge (yz3);
    \path[dashed] (zx) edge (zx3);
\end{scope}
\draw[rotate=-30] (-0.625-1.4,3.2475-0.7) ellipse (3cm and 0.5cm);
\draw[rotate=30] (5+0.625+1,3.2475-3.25) ellipse (3cm and 0.5cm);
\draw[rotate=90] (-2,-2.5) ellipse (3cm and 0.5cm);
\end{tikzpicture}
\vspace{-0.9cm}
\caption{A graph with girth 4 and $\chi^2=\lfloor\frac{3\Delta}{2}\rfloor-1$.}
\end{subfigure}
\caption{Graphs with $\chi^2\approx \frac32 \Delta$}
\label{wegner figure}
\end{figure}
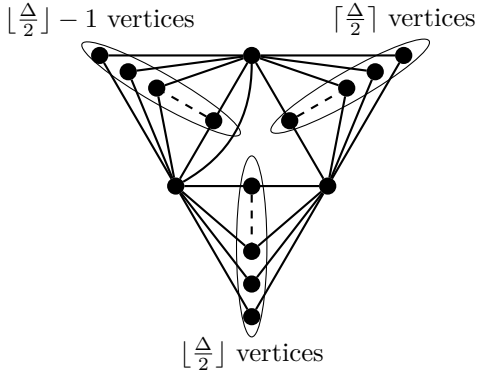
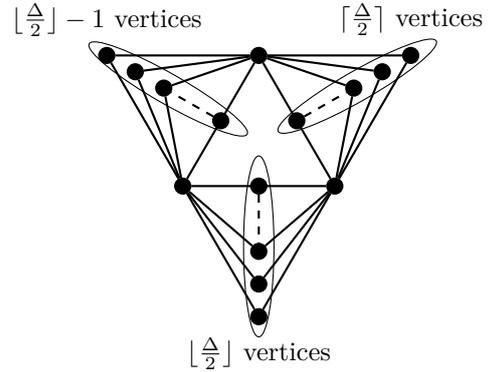

Wegner's conjecture motivated extensive researches on $2$-distance chromatic number of sparse graphs, either of planar graphs with high girth or of graphs with upper bounded maximum average degree which are directly linked due to \Cref{maximum average degree and girth proposition}.

\begin{proposition}[Folklore]\label{maximum average degree and girth proposition}
For every planar graph $G$, $(\mad(G)-2)(g(G)-2)<4$.
\end{proposition}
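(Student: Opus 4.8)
The plan is to realise $\mad(G)$ on a subgraph and then run the standard Euler's-formula density bound on it. Write $\mad(G)=\ad(H)$ for a subgraph $H\subseteq G$ maximising the average degree (such $H$ exists since $G$ is finite). Every cycle of $H$ is a cycle of $G$, so $g(H)\ge g(G)$ and hence $g(H)-2\ge g(G)-2>0$; consequently it is enough to prove $\ad(H)-2<\frac{4}{g(H)-2}$, since multiplying by $g(G)-2$ then yields $(\mad(G)-2)(g(G)-2)<4$. If $\ad(H)<2$ this holds trivially (the left-hand side is negative), and the same remark settles the degenerate case in which $G$ is a forest (then $g(G)=\infty$ by convention and $\mad(G)<2$). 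So from now on assume $\ad(H)\ge 2$; in particular $H$ contains a cycle and $3\le g(H)<\infty$.

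Next I would clean up $H$. Deleting an isolated vertex, deleting a whole acyclic component, or deleting a vertex of degree at most $1$ never decreases the average degree of a graph whose average degree is already at least $2$ (a one-line computation in each case), and each such operation keeps the graph planar, keeps its girth at least $g(G)$, and cannot raise its average degree above $\mad(G)$. Iterating until none applies, we may therefore assume in addition that $\delta(H)\ge 2$; the cycle witnessing $\ad(H)\ge 2$ survives all these deletions, so $H$ is still nonempty.

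Now fix a plane embedding of $H$, with $n$ vertices, $m$ edges, $f$ faces and $c\ge 1$ components, so Euler's formula gives $n-m+f=1+c$. Counting edge--face incidences with multiplicity gives $\sum_{F}\ell(F)=2m$, where $\ell(F)$ is the length of the facial walk of $F$. The key point is that $\ell(F)\ge g(H)$ for every face $F$: because $\delta(H)\ge 2$, the set of edges traversed by any facial walk contains the edge set of a cycle of $H$, hence has at least $g(H)$ elements, hence $\ell(F)\ge g(H)$. Therefore $2m\ge g(H)\,f=g(H)(1+c-n+m)$, which rearranges (using $g(H)>2$ and $c\ge1$) to $m\le\frac{g(H)(n-1-c)}{g(H)-2}\le\frac{g(H)(n-2)}{g(H)-2}$. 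Hence $\ad(H)=\frac{2m}{n}<\frac{2g(H)}{g(H)-2}$, i.e.\ $\ad(H)-2<\frac{4}{g(H)-2}$, which completes the proof by the first paragraph.

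The only step that is not completely mechanical is the bound $\ell(F)\ge g(H)$ in the presence of bridges: a bridge is traversed twice by the same facial walk, so that walk need not itself be a cycle, and one must check that its edge set nevertheless contains a cycle — which it does precisely because $\delta(H)\ge 2$ forbids the facial walk from living inside a tree (a leaf of such a tree would be incident to an extra edge of $H$ pointing into the face, contradicting that it is a face). Everything else is routine manipulation of Euler's formula, so I do not anticipate a genuine obstacle; the main care is simply in organising the reductions so that Euler's formula is applied to a graph that really does contain a cycle.
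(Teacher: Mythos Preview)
The paper does not supply a proof of this proposition; it is simply quoted as folklore and used to pass from the $\mad$ bound to the girth corollary. So there is nothing to compare against, and the question is only whether your argument stands on its own.

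It does. Realising $\mad(G)$ on a subgraph $H$, reducing to $\delta(H)\ge 2$ while keeping $\ad(H)=\mad(G)$ and $g(H)\ge g(G)$, and then running Euler's formula with the face--length bound $\ell(F)\ge g(H)$ is exactly the standard folklore proof, and your bookkeeping (the inequality chain $\ad(H)-2<\tfrac{4}{g(H)-2}\le\tfrac{4}{g(G)-2}$, the strict inequality coming from the factor $(n-2)/n<1$) is clean. The one genuinely delicate point is the justification of $\ell(F)\ge g(H)$ in the presence of cut-vertices and bridges, and you have identified it correctly: once $\delta(H)\ge 2$, the boundary walk of any face cannot be supported on a tree, because a leaf of that tree would have all remaining incident edges forced into the face and hence onto its boundary, a contradiction. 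That argument is sound; the only cosmetic improvement would be to say explicitly that for disconnected $H$ you apply it to one component touching $F$, whose own face containing $F$ already contributes a cycle to $\partial F$.
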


As a consequence, any theorem with an upper bound on $\mad(G)$ can be translated to a theorem with a lower bound on $g(G)$ under the condition that $G$ is planar. Many results have taken the following form: \textit{every graph $G$ of $\mad(G)\leq m_0$ and $\Delta(G)\geq \Delta_0$ satisfies $\chi^2(G)\leq \Delta(G)+c(m_0,\Delta_0)$ where $c(m_0,\Delta_0)$ is a small constant depending only on $m_0$ and $\Delta_0$}. Due to \Cref{maximum average degree and girth proposition}, as a corollary, we have the same results on planar graphs of girth $g\geq g_0(m_0)$ where $g_0$ depends on $m_0$. \Cref{recap table 2-distance} shows all known such results, up to our knowledge, on the $2$-distance chromatic number of planar graphs with fixed girth, either proven directly for planar graphs with high girth or came as a corollary of a result on graphs with bounded maximum average degree.

\begin{table}[H]
\begin{center}
\scalebox{0.7}{%
\begin{tabular}{||c||c|c|c|c|c|c|c|c||}
\hline
\backslashbox{$g_0$ \kern-1em}{\kern-1em $\chi^2(G)$} & $\Delta+1$ & $\Delta+2$ & $\Delta+3$ & $\Delta+4$ & $\Delta+5$ & $\Delta+6$ & $\Delta+7$ & $\Delta+8$\\
\hline \hline
$3$ & \slashbox{\phantom{\ \ \ \ \ }}{} & & &$\Delta=3$ \cite{tho18,har16}& & & & \\
\hline
$4$ & \slashbox{\phantom{\ \ \ \ \ }}{} & & & & & & & \\
\hline
$5$ & \slashbox{\phantom{\ \ \ \ \ }}{} &$\Delta\geq 10^7$ \cite{bon19}\footref{list footnote} &$\Delta\geq 339$ \cite{don17b} &$\Delta\geq 312$ \cite{don17} &$\Delta\geq 15$ \cite{bu18b}\tablefootnote{\label{other footnote}Corollaries of more general colorings of planar graphs.} &$\Delta\geq 12$ \cite{bu16}\footref{list footnote} & $\Delta\neq 7,8$ \cite{don17} &all $\Delta$ \cite{dl16}\\
\hline
$6$ & \slashbox{\phantom{\ \ \ \ \ }}{} &$\Delta\geq 17$ \cite{bon14}\footref{mad footnote} &$\Delta\geq 9$ \cite{bu16}\footref{list footnote} & &all $\Delta$ \cite{bu11} & & & \\
\hline
$7$ & $\Delta\geq 16$ \cite{iva11}\tablefootnote{\label{list footnote}Corollaries of 2-distance list-colorings of planar graphs.} & & &$\Delta=4$ \cite{cra13}\tablefootnote{\label{mad list footnote}Corollaries of 2-distance list-colorings of graphs with a bounded maximum average degree.} & & & & \\
\hline
$8$ & $\Delta\geq 9$\cite{lmpv19}\footref{other footnote}& &$\Delta=5$ \cite{bu15}\footref{mad list footnote} & & & & & \\
\hline
$9$ &\makecell{$\Delta\geq 8$ \cite{bon13}\footref{mad footnote}\\\textcolor{red}{$\Delta\geq 7$}\tablefootnote{Corollary of our result.}} &$\Delta=5$ \cite{bu15}\footref{mad list footnote} &$\Delta=3$ \cite{cra07}\footref{list footnote} & & & & & \\
\hline
$10$ & $\Delta\geq 6$ \cite{iva11}\footref{list footnote} & & & & & & & \\
\hline
 $11$ & &$\Delta=4$ \cite{cra13}\footref{mad list footnote} & & & & & & \\
\hline
$12$ & $\Delta=5$ \cite{iva11}\footref{list footnote} &$\Delta=3$ \cite{bi12}\footref{list footnote} & & & & & &\\
\hline
$13$ & & & & & & & &\\
\hline
$14$ &$\Delta\geq 4$ \cite{bon13}\tablefootnote{\label{mad footnote}Corollaries of 2-distance colorings of graphs with a bounded maximum average degree.} & & & & & & &\\
\hline
$\dots$ & & & & & & & & \\
\hline
$22$ & $\Delta=3$ \cite{bi12bis}\footref{list footnote} & & & & & & & \\
\hline
\end{tabular}}
\caption{The latest results with a coefficient 1 before $\Delta$ in the upper bound of $\chi^2$.}
\label{recap table 2-distance}
\end{center}
\end{table} 

For example, the result from line ``7'' and column ``$\Delta + 1$'' from \Cref{recap table 2-distance} reads as follows : ``\emph{every planar graph $G$ of girth at least 7  and of $\Delta$ at least 16 satisfies $\chi^2(G)\leq \Delta+1$}''. The crossed out cases in the first column correspond to the fact that, for $g_0\leq 6$, there are planar graphs $G$ with $\chi^2(G)=\Delta+2$ for arbitrarily large $\Delta$ \cite{bor04,dvo08b}. The lack of results for $g = 4$ is due to the fact that the graph in \Cref{wegner figure}(ii) has girth 4, and $\chi^2=\lfloor\frac{3\Delta}{2}\rfloor-1$ for all $\Delta$.

We are interested in the case $\chi^2(G)=\Delta+1$ as $\Delta+1$ is a trivial lower bound for $\chi^2(G)$. In particular, we were looking for the smallest integer $\Delta_0$ such that every graph with maximum degree $\Delta\geq \Delta_0$ and $\mad\leq \frac{18}{7}$ (which contains planar graphs with $\Delta\geq\Delta_0$ and girth at least $9$) can be $2$-distance colored with $\Delta+1$ colors. Borodin \textit{et al.}~\cite{bor08} showed that planar graphs of girth at least $9$ and $\Delta\geq 10$ are $2$-distance $(\Delta+1)$-colorable in 2008. In 2011, Ivanova~\cite{iva11} improved on the result with a simpler proof that planar graphs of girth at least $8$ and $\Delta\geq 10$ are $2$-distance $(\Delta+1)$-colorable. Later on, in 2014, Bonamy \textit{et al.}~\cite{bon13} improved on this result once again by proving that graphs with $\mad<\frac{18}{7}$ and $\Delta\geq 8$ are $2$-distance $\Delta+1$-colorable. In this paper, we will improve this result to graphs with $\mad<\frac{18}{7}$ and $\Delta\geq 7$ in \Cref{main theorem}. But most importantly, that breakthrough is obtained by using a new approach based on the potential method.

All of these results and most of the results in \Cref{recap table 2-distance} are proven using the discharging method. Due to the extensive amount of work done on this subject, the classic discharging method is reaching its limit. The \emph{discharging method} assigns a certain charge to each object (often vertices and faces when the graph is planar) of a minimal counter-example $G$ to the result we want to prove. Then, using either Euler's formula or the upper bound on the maximum average degree, we can prove that the total amount of charges is negative. However, by redistributing these charges via \emph{discharging rules} that do not modify the total sum, we can prove that we have a nonnegative amount of charges under the reducibility of some configurations, which results in a contradiction. Since the initial total amount of charges is fixed, the improvements on these type of results rely on the reduction of new configurations and reducing a configuration relies on extending a precoloring of a subgraph of $G$. Until now, we have always assumed the worst case scenario for the precoloring. However, these assumptions can only get us so far when we can find unextendable precolorations. In order to avoid the worst case scenario, we need to add some vertices and edges to our subgraph but we might run into the risk of increasing our maximum average degree. Then came the \emph{potential method}, which introduces a potential function that can, more precisely, quantify the local maximum average degree in our subgraph, thus allowing us to add edges and vertices while staying in the same class of graphs. This breakthrough allowed for new configurations to become reducible and thus, improving on the limit of what the classic discharging method was able to reach.  

Our main result is the following:
\begin{theorem} \label{main theorem}
If $G$ is a graph with $\mad(G)< \frac{18}{7}$, then $G$ is $2$-distance $(\Delta(G)+1)$-colorable for $\Delta(G)\geq 7$.
\end{theorem}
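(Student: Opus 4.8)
The plan is a minimal-counterexample argument combining structural reducibility with discharging, where the \emph{potential method} supplies exactly the reductions that the classical approach cannot reach. \textbf{Setup and potential.} Fix $\Delta\ge 7$; if \Cref{main theorem} fails for this $\Delta$, pick among all graphs with $\mad<\frac{18}{7}$ and maximum degree at most $\Delta$ that are not $2$-distance $(\Delta+1)$-colorable one, call it $G$, minimizing $|V(G)|+|E(G)|$. (Any $G_0$ with $\mad(G_0)<\frac{18}{7}$ and $\Delta(G_0)\ge 7$ lies in this class for $\Delta:=\Delta(G_0)$, so a contradiction for each $\Delta\ge 7$ yields the theorem.) Since $\mad$ is monotone under subgraphs, every proper subgraph of $G$ is $2$-distance $(\Delta+1)$-colorable; a short argument shows $G$ is connected with minimum degree at least $2$. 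Following the potential method, I recast $\mad(G)<\frac{18}{7}$ as follows: with $\rho(H):=9|V(H)|-7|E(H)|$ on subgraphs $H\subseteq G$, one has $\frac{2|E(H)|}{|V(H)|}<\frac{18}{7}\iff 7|E(H)|<9|V(H)|$, so by integrality $\rho(H)\ge 1$ for every nonempty $H\subseteq G$. For a vertex set $S$, set $\rho_G(S):=\min\{\rho(H): H\subseteq G,\ S\subseteq V(H)\}$; this quantity measures how much room there is to add edges or vertices touching $S$ without leaving the class $\mad<\frac{18}{7}$, which is precisely what rigid worst-case reasoning cannot exploit.

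\textbf{Reducible configurations.} The heart of the proof is a list of configurations that cannot occur in $G$. The simpler ones are classical ($1$-vertices; overly long chains of $2$-vertices; $3$-vertices carrying too many $2$-neighbors that hang sparse appendages; and so on): delete a suitable subgraph $R$ --- a vertex, a $2$-path, or a small tree-like piece --- $2$-distance $(\Delta+1)$-color the rest by minimality, and extend greedily, the number of colors forbidden at each uncolored vertex being at most $\Delta$. When the naive deletion leaves some uncolored vertex facing all $\Delta+1$ colors --- the wall that stalls classical discharging near $\Delta=7$ --- I would instead (a) enlarge the deleted appendage $R$, using a lower bound on $\rho_G$ to certify that the remainder is still in the class and that the extension now has slack, or (b) form an auxiliary graph $G'$ from $G-R$ by identifying or joining by edges the vertices formerly adjacent to $R$, so that any $2$-distance coloring of $G'$ encodes the color separations needed in $G$; here a bound $\rho_G(S)\ge c$ for the affected set $S$ (with $c$ the number of edges/vertices added) certifies $\mad(G')<\frac{18}{7}$, so minimality applies to $G'$. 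The list of forbidden configurations has to be rich enough that, once they are all excluded, every vertex of $G$ is ``heavy enough'' for the discharging to close.

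\textbf{Discharging.} Give each $v$ the charge $\mu(v)=d(v)-\frac{18}{7}$, so $\sum_v\mu(v)=2|E(G)|-\frac{18}{7}|V(G)|<0$. Here $\mu(v)=-\frac47$ when $d(v)=2$, $\mu(v)=\frac37$ when $d(v)=3$, and $\mu(v)\ge\frac{10}{7}$ when $d(v)\ge 4$, so only $2$-vertices are in deficit. I would route charge from $\ge 3$-vertices toward the nearby $2$-vertices (relayed, if necessary, along short $2$-chains), with the fractions tuned --- roughly $\frac27$ from each side of a $2$-vertex --- so that, invoking the absence of the reducible configurations, a $2$-vertex ends at exactly $0$, a $3$-vertex keeps a nonnegative remainder because it has boundedly many $2$-neighbors, and $\ge 4$-vertices stay comfortably positive. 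Then $0\le\sum_v\mu(v)<0$, a contradiction, proving \Cref{main theorem}; the corollary for planar graphs of girth at least $9$ then follows from \Cref{maximum average degree and girth proposition}.

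\textbf{Main obstacle.} The genuine difficulty, I expect, is the reducibility of the potential-driven configurations: one must simultaneously (i) choose precisely the right subgraph to delete or contract, (ii) verify the modified graph stays in the class via a tight lower bound on $\rho_G$ of the relevant vertex set --- which itself forces ruling out still smaller sub-configurations inside $G$ --- and (iii) control the extension, i.e.\ bound by $\Delta$ the number of colors forbidden at each uncolored vertex, a count that is razor-thin exactly at $\Delta=7$. Because the discharging weights and the configuration list must be engineered to fit one another, the real work is this mutual calibration rather than any single estimate.
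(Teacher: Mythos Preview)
Your plan matches the paper's architecture --- minimal counterexample, potential $\rho(H)=9|V(H)|-7|E(H)|$, reducible configurations, discharging --- but it remains a plan, and the parts you do commit to contain concrete errors. The discharging you sketch does not close: with $\mu(v)=d(v)-\tfrac{18}{7}$ and ``roughly $\tfrac27$ from each side of a $2$-vertex'', already a $(1,1,1)$-vertex goes negative ($\tfrac37-3\cdot\tfrac27<0$), and your assertion that ``$\ge 4$-vertices stay comfortably positive'' is false. In the paper's proof (equivalently scaled to $\mu(v)=7d(v)-18$), vertices of \emph{every} degree $2$ through $7$ can finish with charge exactly zero, and the contradiction only arrives via a second pass showing that universal tightness forces $G$ to be a cycle. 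The actual rules distinguish large/medium/small $2$-vertices, $1$- and $2$-path \emph{bridges}, and specific vertex types such as $(2,2,0)$, $(2,1,0)$, $(2,2,2,0)$, each with its own transfer; a $(2,2,0)$-vertex, for instance, must receive $4$ from a forced $6^+$-neighbor \emph{and} $\tfrac12$ from each incident $2$-path bridge just to reach zero.

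More substantively, you locate the potential method entirely on the reducibility side, but the paper injects it into the discharging itself. A $3$-path has two $7$-endvertices and only one of them --- the \emph{sponsor}, chosen by comparing $\rho^*_{G-P}$ at the two ends (\Cref{sponsor definition}) --- pays for the small $2$-vertex in the middle. This potential-defined asymmetry is exactly what lets the heavy lemmas (\Cref{sponsor lemma0,sponsor lemma1,sponsor lemma2}) assume $\rho^*_{G-P}(u)\le\rho^*_{G-P}(v)$, add auxiliary paths or edges on the high-potential side, and still balance the sponsor's charge; nothing in your outline anticipates this coupling of potential and discharging, and without it the $7$-vertex case does not close. (A smaller point: the paper only argues $\Delta=7$, invoking Bonamy--L\'ev\^eque--Pinlou for $\Delta\ge 8$; attacking all $\Delta\ge 7$ uniformly as you propose is unnecessary.)
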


For planar graphs, we obtain the following corollary:
\begin{corollary} \label{main corollary}
If $G$ is a planar graph with $g(G)\geq 9$, then $G$ is 2-distance $(\Delta(G)+1)$-colorable for $\Delta(G)\geq 7$.
\end{corollary}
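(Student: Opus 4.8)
\Cref{main corollary} follows at once from \Cref{main theorem}: by \Cref{maximum average degree and girth proposition} a planar graph $G$ with $g(G)\ge 9$ satisfies $(\mad(G)-2)(g(G)-2)<4$, and since $g(G)-2\ge 7$ this forces $\mad(G)<2+\tfrac47=\tfrac{18}{7}$ (the case $\mad(G)\le 2$ being trivial), so \Cref{main theorem} applies. Hence all the substance is in \Cref{main theorem}, which I would prove by the discharging method powered by a potential function. Suppose it fails and let $G$ be a counterexample minimizing $|V(G)|+|E(G)|$; set $\Delta=\Delta(G)\ge 7$. Routine arguments show $G$ is connected with $\delta(G)\ge 1$. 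Introduce the potential $\rho(H)=9|V(H)|-7|E(H)|$ for $H\subseteq G$: the hypothesis $\mad(G)<\tfrac{18}{7}$ is exactly the assertion that $\rho(H)\ge 1$ for every nonempty $H\subseteq G$. The role of $\rho$ is quantitative: building an auxiliary graph from a fragment of $G$ by adding $k$ vertices and $\ell$ edges shifts $\rho$ by $9k-7\ell$, so as long as every subgraph keeps $\rho\ge 1$ we stay in the class $\mad<\tfrac{18}{7}$ and may apply minimality to the auxiliary graph.

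\textbf{Reducible configurations.} The heart of the proof is a list of configurations forbidden in $G$. The guiding local estimate is that the number of colors blocked at a vertex $v$ is at most $d(v)+\sum_{u\sim v}(d(u)-1)$, the number of vertices within distance $2$ of $v$, so a low-degree vertex surrounded by low-degree vertices can be colored last; this immediately kills, e.g., $1$-vertices and short threads attached to small-degree vertices. First I would establish the classical reductions of this flavor: leaves with a low-degree neighbor, long threads (maximal paths of $2$-vertices), and $d$-vertices incident to too many $1$- and $2$-vertices. Then comes the new input: for the configurations that the potential method unlocks, I would delete a carefully chosen set $X\subseteq V(G)$, produce a $2$-distance $(\Delta+1)$-coloring of $G-X$ by minimality—possibly after first identifying certain vertices or inserting edges between vertices forced to receive distinct colors, a modification that is legitimate precisely because the $\rho$-bookkeeping certifies the modified graph still lies in $\mad<\tfrac{18}{7}$—and finally show the coloring extends over $X$ by a counting/greedy argument. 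Each such reduction is a short self-contained lemma; the delicate point in each is verifying that $\rho\ge 1$ is preserved on \emph{every} subgraph of the modified graph.

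\textbf{Discharging and conclusion.} Assign each vertex the initial charge $\mu(v)=d(v)-\tfrac{18}{7}$; then $\sum_v\mu(v)=2|E(G)|-\tfrac{18}{7}|V(G)|<0$. I would then design discharging rules moving charge from high-degree vertices to nearby $1$-, $2$- and $3$-vertices along threads, with the transfer amounts calibrated so that the absence of the reducible configurations forces every vertex to finish with charge $\mu^*(v)\ge 0$. Summing gives $0\le\sum_v\mu^*(v)=\sum_v\mu(v)<0$, a contradiction, which proves \Cref{main theorem}, and \Cref{main corollary} follows as above.

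\textbf{Main obstacle.} I expect the difficulty to be concentrated in two intertwined places. One is choosing the right family of reducible configurations: strong enough that the discharging balances at the tight ratio $\tfrac{18}{7}$ and, most importantly, all the way down to $\Delta=7$—the binding case, since the slack $\Delta+1-\sum_{u\sim v}(d(u)-1)$ is smallest there—yet with each configuration still reducible. The other is the potential accounting inside the new reductions: after deleting $X$ and modifying $G-X$ to rule out unextendable precolorings, one must verify $9|V|-7|E|\ge 1$ for every subgraph of the new graph while simultaneously ensuring the modification really does pin down the precoloring enough to extend over $X$. Trading off ``add enough structure to control the precoloring'' against ``add so little that $\mad$ stays below $\tfrac{18}{7}$'' is the crux, and it is exactly what the potential method is designed to manage.
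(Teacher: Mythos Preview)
Your reduction of \Cref{main corollary} to \Cref{main theorem} via \Cref{maximum average degree and girth proposition} is correct and is exactly how the paper derives the corollary (indeed, the paper does not even spell this out, taking it as immediate from the folklore proposition). Your subsequent outline of the proof of \Cref{main theorem} is at the right level of generality and matches the paper's strategy---potential function $\rho=9|V|-7|E|$, a catalogue of reducible configurations obtained by deleting a fragment and sometimes re-attaching paths or edges while tracking $\rho$, followed by discharging with initial charge proportional to $d(v)-\tfrac{18}{7}$---though of course the actual work lies in the specific configurations and rules, which you have (appropriately) not attempted to reproduce.
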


Since Bonamy, Lévêque, and Pinlou has already proven in \cite{bon13} that:
\begin{theorem}[Bonamy, Lévêque, Pinlou \cite{bon13}]
If $G$ is a graph with $\mad(G)< \frac{18}{7}$, then $G$ is list $2$-distance $(\Delta(G)+1)$-colorable for $\Delta(G)\geq 8$.
\end{theorem}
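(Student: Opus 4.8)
The plan is to argue by contradiction via the discharging method. Let $G$ be a counterexample minimizing $|V(G)|+|E(G)|$: so $\mad(G)<\frac{18}{7}$, $\Delta(G)\geq 8$, $G$ is connected, but $G$ has no list $2$-distance $(\Delta+1)$-coloring. Every proper subgraph $H\subsetneq G$ satisfies $\mad(H)\leq\mad(G)<\frac{18}{7}$ and $\Delta(H)\leq\Delta$, hence by minimality $H$ is list $2$-distance $(\Delta+1)$-colorable; this will be used throughout. The basic quantitative fact is that a vertex $v$ has at most $\sum_{u\in N(v)}d_G(u)$ vertices at distance $\leq 2$ from it other than $v$ itself, so if $\sum_{u\in N(v)}d_G(u)\leq\Delta$ then any list $2$-distance $(\Delta+1)$-coloring of $G-v$ extends to $v$; a slightly more delicate version of this count handles deleting a short path through $2$-vertices and re-inserting it, provided one colors the path's vertices in an order for which the last one still sees fewer than $\Delta+1$ forbidden colors.

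The first step is to establish a list of \emph{reducible configurations}, i.e. subgraphs that cannot appear in $G$. I expect to need, among others: (a) $\delta(G)\geq 2$, since a pendant vertex is removable by the count above ($d_G(u)\leq\Delta$); (b) strong restrictions on \emph{threads} (maximal paths whose internal vertices all have degree $2$): a sufficiently long thread, or a thread whose two endpoints both have small degree, can be deleted and recolored; (c) a $3$-vertex cannot be incident to too many threads, nor adjacent to too many $2$-vertices; (d) a vertex of larger degree $k$ cannot be surrounded by too many $2$-vertices relative to $k$. Each such claim is proved by deleting the offending gadget, invoking minimality on the remainder, and then greedily completing the coloring in a carefully chosen order so that every recolored vertex has a free color when its turn comes.

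The second step is the discharging. Assign to each vertex $v$ the charge $\omega(v)=d_G(v)-\frac{18}{7}$; then $\sum_{v}\omega(v)=2|E(G)|-\frac{18}{7}|V(G)|=|V(G)|\bigl(\ad(G)-\frac{18}{7}\bigr)<0$ since $\ad(G)\leq\mad(G)<\frac{18}{7}$. Only $2$-vertices are in deficit ($\omega=-\frac47$), while a $k$-vertex with $k\geq 3$ has surplus $\omega=k-\frac{18}{7}\geq\frac37$. I would route charge from $3^+$-vertices to the $2$-vertices along threads — each $3^+$-vertex sending a fixed amount (of order $\frac27$) into each incident thread, distributed among that thread's $2$-vertices, with an additional top-up rule for threads emanating from high-degree vertices. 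Using the reducible configurations of step one, one then checks that after discharging every vertex has nonnegative charge: each $2$-vertex gathers its missing $\frac47$ from the (at most two) ends of its thread; a $3$-vertex is incident to few enough threads, by (c), not to be drained below $0$; and larger vertices keep a nonnegative balance by (d). As discharging preserves the total, this contradicts $\sum_v\omega(v)<0$, proving the theorem.

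The main obstacle is calibrating the two steps against each other at the tight density $\frac{18}{7}$ (the exact threshold corresponding, for planar graphs, to girth $9$ via \Cref{maximum average degree and girth proposition}): one must reduce \emph{precisely} the configurations whose absence makes some consistent set of discharging rules close, and since neither the rules nor the configuration list is forced, there is genuine design freedom and a real risk of getting an inconsistent pairing. The truly delicate reductions are those where, after deleting a thread or a small neighborhood of a $2$- or $3$-vertex, the precolored graph does not obviously extend; there one has to pinpoint the right vertex (or pair of vertices) to color last and argue it retains a free color, typically splitting into sub-cases according to how many of the relevant neighbors coincide or lie at distance $2$. Making these extension arguments work uniformly for all $\Delta\geq 8$ while keeping the configuration list small enough for the discharging to succeed is where essentially all the effort goes — and it is exactly here that the arguments fail when one tries to push $8$ down to $7$, which is why the present paper replaces the plain charge $d(v)-\frac{18}{7}$ by a potential function.
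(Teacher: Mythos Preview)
The paper does not prove this theorem at all: it is quoted verbatim from \cite{bon13} as a known result, precisely so that the present paper can restrict attention to the remaining case $\Delta=7$ (Theorem~\ref{main theorem2}). There is therefore no ``paper's own proof'' to compare your attempt against.

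As for your proposal itself, it is not a proof but a description of the \emph{shape} of a discharging argument. You correctly identify the two halves (reducible configurations; charge redistribution with initial charge $d(v)-\tfrac{18}{7}$) and you correctly anticipate that the work lives in calibrating the configuration list against a specific rule set. But you do not actually carry out either half: you give no concrete list of configurations, no concrete discharging rules, and no verification that any particular rule set closes. Phrases like ``I expect to need, among others'' and ``I would route charge \ldots\ of order $\tfrac{2}{7}$'' are planning, not proof. At the threshold $\tfrac{18}{7}$ the margins are tight enough that the reductions for $3$-vertices and for threads ending at low-degree vertices require genuine case analysis (compare the paper's Lemmas~\ref{small vertex lemma}--\ref{weird cases lemma2} for the harder $\Delta=7$ setting), and nothing in your outline shows that any specific choice works.

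One small correction to your final paragraph: the present paper does \emph{not} replace the charge $d(v)-\tfrac{18}{7}$ by the potential function. The discharging still uses the ordinary charge $\mu(v)=7d(v)-18$ (a rescaling of $d(v)-\tfrac{18}{7}$). The potential function $\rho^*_G$ is used only inside the \emph{reducibility} arguments (Lemmas~\ref{pot 3-path lemma}--\ref{sponsor lemma2}), to certify that after deleting a path and adding an auxiliary path the graph still has $\mad\leq\tfrac{18}{7}$, so that minimality can be invoked on a graph that is not literally a subgraph of $G$.
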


We will prove the following, which is a stronger version with $\mad(G)\leq \frac{18}{7}$ instead of $\mad(G)<\frac{18}{7}$:
\begin{theorem} \label{main theorem2}
If $G$ is a graph with $\mad(G)\leq \frac{18}{7}$, then $G$ is $2$-distance $(\Delta(G)+1)$-colorable for $\Delta(G) = 7$.
\end{theorem}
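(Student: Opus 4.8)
The plan is to argue by contradiction via the discharging method, enhanced by the potential method as advertised in the introduction. Let $G$ be a counterexample to \Cref{main theorem2} minimizing $|V(G)|+|E(G)|$: so $\Delta(G)=7$, $\mad(G)\le\frac{18}{7}$, and $G$ has no $2$-distance $8$-coloring, but every proper subgraph does. First I would record the basic structural consequences of minimality: $G$ is connected, has minimum degree at least $2$, and no two $2$-vertices are adjacent-unless forced otherwise-with the usual ``a small-degree vertex has few constraints'' counting. The key quantitative device is a potential function $\rho(H)=a|E(H)|-b|V(H)|$ (with $a,b$ chosen so that $\mad(G)\le\frac{18}{7}$ reads as $\rho(H)\ge 0$ for all $H\subseteq G$, e.g. scaling $7|E(H)|-9|V(H)|\ge -\text{const}$ on components); one then defines, for a subgraph $S$ and a vertex set, the notion of $S$ being ``reducible by potential'' when adding a bounded gadget keeps $\rho\ge 0$ while creating a configuration already known to be reducible in the classical sense.

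Next I would set up the list of reducible configurations. Each reducibility claim has the shape: ``if $G$ contains configuration $C$, then a $2$-distance $8$-coloring of $G-X$ (or of $G$ with some edges contracted/added, chosen so the potential bound is preserved) extends to $G$, contradicting minimality.'' The routine cases are the ones already handled in \cite{bon13} for $\Delta\ge 8$: paths of $2$-vertices of bounded length hanging off higher-degree vertices, $3$-vertices with two $2$-neighbors, etc.; for these the extension argument is a counting of forbidden colors at distance $\le 2$ against the palette of size $8$. The genuinely new work-and the place where the potential method earns its keep-is pushing these through with $\Delta=7$ exactly and with the slack in $\mad$ reduced to ``$\le$'' rather than ``$<$'': here I would, for each stubborn configuration, add auxiliary vertices/edges to the precolored subgraph so as to control the colors appearing in the second neighborhood, verifying each time that the potential of the enlarged subgraph stays nonnegative (this is exactly the bookkeeping $\rho(H+\text{gadget})\ge 0$).

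Then comes the discharging phase. Assign to each vertex $v$ the charge $\mu(v)=d(v)-\frac{18}{7}$; by $\mad(G)\le\frac{18}{7}$ the total charge $\sum_{v}\mu(v)=2|E(G)|-\frac{18}{7}|V(G)|\le 0$. I would design discharging rules sending charge from vertices of degree $\ge 3$ (positive charge: $\frac{3}{7}$, $\frac{10}{7}$, $\ldots$, up to $\frac{31}{7}$ for a $7$-vertex) to nearby $2$-vertices (deficit $\frac{4}{7}$ each), typically routed along the threads of $2$-vertices. Using the reducible configurations to bound how many $2$-vertices can lean on a given high-degree vertex and how long threads can be, I would show every vertex ends with charge $\ge 0$, and that some vertex ends strictly positive (or that the $\le 0$ inequality is violated), the desired contradiction. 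Because the margin is tight ($\frac{18}{7}$ exactly, $\Delta=7$ exactly), I expect the main obstacle to be precisely this last step: finding a rule set and a reducible-configuration list that are simultaneously consistent-i.e.\ rich enough that discharging closes, yet every listed configuration is actually reducible, with the potential-method gadgets doing the heavy lifting for the few configurations that classical arguments cannot reduce at $\Delta=7$. A secondary delicate point is handling the boundary case $\mad=\frac{18}{7}$ with equality, which forces the final contradiction to come from a strict-positivity witness rather than from slack in the hypothesis.
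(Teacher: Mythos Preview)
Your outline matches the paper's approach: a minimal counterexample, the potential function $\rho_G(A)=9|A|-7|E(G[A])|$ (your sign convention is inverted but equivalent), a list of reducible configurations where the new ones are obtained by adding short paths to a subgraph while keeping $\rho\ge 0$, the charge $\mu(v)=7d(v)-18$, and a final argument that all vertices end at charge exactly $0$, which is then shown to be impossible. The paper's substance, however, lies entirely in the execution you defer: roughly twenty specific structural lemmas (several of which---Lemmas~20 through~28---use the potential method in a delicate way, adding one or two auxiliary paths and then deriving infeasible systems of inequalities on $\rho^*$ via submodularity), three tiers of discharging rules involving ``bridge'' and ``sponsor'' vertices, and a lengthy case analysis for $7$-vertices; your proposal does not anticipate any of these specifics, so as written it is a correct strategic plan rather than a proof.
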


To prove \Cref{main theorem2}, let us define the potential function, which is the key to the potential method.

Let $A\subseteq V(G)$, we define $\rho_G(A) = 9|A| - 7|E(G[A])|$.
Note that $\rho_G(A)\geq 0$ for all $A\subseteq V(G)$ if and only if $\mad(G)\leq \frac{18}{7}$. 
We define \emph{the potential function} $\rho^*_G(A) = \min\{\rho_G(S)|A\subseteq S\subseteq V(G)\}$ for all $A\subseteq V(G)$. Since $\rho_G(A)\geq 0$ for all $A\subseteq V(G)$, the same holds for $\rho^*_G(A)$.

Thus, we will prove the following equivalent version of \Cref{main theorem2}.

\begin{theorem} \label{theorem}
Let $G$ be a graph such that $\rho^*_G(A)\geq 0$ for all $A\subseteq V(G)$, then $G$ is $2$-distance $(\Delta(G)+1)$-colorable for $\Delta(G) = 7$.
\end{theorem}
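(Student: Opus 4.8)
The plan is to argue by contradiction: let $G$ be a counterexample to \Cref{theorem} minimizing $|V(G)|+|E(G)|$, so $G$ is connected, $\Delta(G)=7$, every proper subgraph is $2$-distance $8$-colorable, and $\rho^*_G(A)\ge 0$ for all $A\subseteq V(G)$. The proof then has the classic two-phase shape. In the \emph{structural phase} I would establish a list of reducible configurations — small subgraphs that cannot occur in $G$. Here the potential function does the heavy lifting: whenever I want to delete a subgraph $H$, color $G-H$ by minimality, and extend the coloring, the naive count of forbidden colors on the boundary may be too large; so instead I first \emph{add} carefully chosen vertices/edges to the deleted piece (e.g.\ identifying vertices, or adding a pendant structure) to control how many already-colored neighbors the uncolored vertices see, using $\rho^*_G$ to certify that the modified graph still lies in the class $\{\,G : \rho^*_G\ge 0\,\}$ (equivalently $\mathrm{mad}\le \tfrac{18}{7}$). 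Concretely I expect lemmas of the form: $G$ has no $1^{-}$-vertex; no two adjacent $2$-vertices unless forced; precise bounds on the number and configuration of $2$- and $3$-vertices ("weak" neighbors) around a vertex of each degree $3,\dots,7$; and more global statements such as bounds on $\rho^*_G(N[v])$ or $\rho^*_G$ of paths of small-degree vertices. Standard facts I would use repeatedly: $\rho_G$ is submodular, $\rho^*_G$ is monotone and also submodular, $\rho^*_G(A)=\rho_G(A)$ is impossible to beat only when $A$ is "closed", and an uncolored vertex $v$ with $d(v)=d$ can be colored greedily as long as the number of colors seen at distance $\le 2$ is at most $\Delta=7$, i.e.\ as long as $\sum_{u\in N(v)}(d(u)-1)\le 7$ in the worst case — the potential method is exactly what lets us beat this worst case.

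In the \emph{discharging phase} I would assign to each vertex $v$ the initial charge $\mu(v)=d(v)-\tfrac{18}{7}$; by $\mathrm{mad}(G)\le \tfrac{18}{7}$ (equivalently $\rho_G(V(G))\ge 0$) the total initial charge is $\sum_v \mu(v)=2|E|-\tfrac{18}{7}|V|\le 0$. Then I design discharging rules \ru{1}, \ru{2}, \dots\ sending charge from high-degree vertices ($d\ge 4$, which start with positive charge $d-\tfrac{18}{7}>0$) to nearby low-degree vertices ($2$-vertices start with $-\tfrac4 7$, $3$-vertices with $\tfrac37$), plausibly routed through the $3$-vertices and along paths, so that after discharging every vertex has charge $\ge 0$ — using the reducible configurations from the structural phase to guarantee each low-degree vertex has enough high-degree support. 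Since the rules conserve total charge, this yields $0\le \sum_v \mu^*(v)=\sum_v\mu(v)\le 0$, forcing equality everywhere; one then extracts from the equality case either a direct contradiction or that $G$ must be a very restricted graph (e.g.\ $\mathrm{mad}(G)=\tfrac{18}{7}$ exactly with a rigid local structure) which a final ad hoc argument rules out — this last point is where proving the sharp version with $\mathrm{mad}\le \tfrac{18}{7}$ rather than $<\tfrac{18}{7}$ requires extra care.

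The main obstacle, and the novel part, is the reducibility analysis for the configurations that were \emph{not} reducible under the classical approach — these are precisely the configurations that force $\Delta\ge 8$ in the earlier result of Bonamy, Lévêque, and Pinlou. For such a configuration $H$, after deleting $H$ and $2$-distance-coloring $G-H$, some uncolored vertex may a priori see up to $8$ colors in $G-H$, leaving no color available. The resolution is to perform a \emph{potential-guided modification}: choose a set $S\supseteq V(H)$ (or a small superset of the neighborhood) realizing $\rho^*_G(V(H))=\rho_G(S)$, delete $G[S]$ instead, and/or glue auxiliary edges among the boundary vertices; the inequality $\rho_G(S)\ge 0$ together with the explicit structure of $S$ then both (a) shows the modified graph still has $\rho^*\ge 0$ so minimality applies, and (b) limits how many distinct colors reach each uncolored vertex, so a greedy/Hall-type extension (possibly ordering the uncolored vertices, or invoking a small list-coloring/matching argument on the uncolored part) completes the coloring. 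Getting the bookkeeping right — which auxiliary vertices and edges to add, verifying $\rho^*_G\ge 0$ is preserved in every case, and checking the color count at every boundary vertex of every new configuration — is the bulk of the technical work, and it is what allows $\Delta=7$ to be reached.
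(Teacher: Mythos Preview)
Your high-level plan matches the paper's: minimal counterexample, structural lemmas via the potential method, discharging with initial charge proportional to $d(v)-\tfrac{18}{7}$, and a final equality-case analysis showing $G$ degenerates to a cycle. The submodularity and monotonicity facts you list are exactly the tools used.

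However, your description of the potential-guided modification misses the actual mechanism and, as stated, would not yield the new reducibility lemmas. You write ``choose a set $S\supseteq V(H)$ realizing $\rho^*_G(V(H))=\rho_G(S)$, delete $G[S]$ instead''. The paper never deletes the realizing set. Instead it deletes a \emph{fixed} small piece to get a subgraph $H$, and then tries to \emph{add} an auxiliary $k$-path (or edge) between two designated vertices $x,y\in V(H)$. By \Cref{potlemma}, this is legal precisely when $\rho^*_H(xy)\ge 7-2k$; if so, minimality gives a coloring of $H+P$, and the colors on the auxiliary path are transplanted back into $G$ to force a specific color inequality (e.g.\ $\phi(p_3)\ne\phi(q'_1)$) that makes the greedy/Hall extension succeed. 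If instead $\rho^*_H(xy)\le 6-2k$, you record this as an \emph{upper bound} on $\rho^*_H$. The heart of the new lemmas (\Cref{weird cases lemma3}--\Cref{sponsor lemma2}) is then a feasibility argument: collect several such upper bounds on $\rho^*_H$ of pairs, sum them via submodularity (\Cref{potadd}), and contradict the lower bound coming from \Cref{potlowerbound}. This ``either the auxiliary path works, or you get an inequality; the system of inequalities is infeasible'' dichotomy is the engine that reduces the configurations blocking $\Delta=7$, and it is not what ``delete $G[S]$'' would give you --- deleting a larger unknown set $S$ destroys the control over which boundary vertices see which colors, which is exactly the information you need.
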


We study some elementary operations of the potential function in \Cref{sec2}. Then, we will use this potential function coupled with the discharging method as mentionned above to prove that result in \Cref{sec3}.

\section{Elementary operations with the potential function}
\label{sec2}


In this section, we will prove some useful inequalities, that we will use repeatedly in our proof, involving this potential function on a graph $G$ with $\mad(G)\leq \frac{18}{7}$ .


Let $A,S\subseteq V(G)$ such that $A\subseteq S$. Since any $K\subseteq V(G)$ that contains $S$ will also contain $A$, by definition of $\rho^*_G$, we have:
\begin{equation}\label{potsuperset}
\rho^*_G(S) \geq \rho^*_G(A).
\end{equation}


Let $A\subseteq V(G)$ and $H$ be a subgraph of $G$ that contains $A$. Since any subset $S\subseteq V(H)$ that contains $A$ is also a subset of $V(G)$, by definition of $\rho^*_G$, the following ensues:
\begin{equation}\label{potsubgraph}
\rho^*_H(A)\geq \rho^*_G(A).
\end{equation}


Let $A,B\subseteq V(G)$. Since $|A|+|B| = |A\cup B| + |A\cap B|$ and $|E(G[A])|+|E(G[B])| \leq |E(G[A\cup B])| + |E(G[A\cap B])|$, we have $\rho_G(A)+\rho_G(B) \geq \rho_G(A\cup B) +\rho_G(A\cap B)$.

Now, let $A\subseteq S\subseteq V(G)$ and $B\subseteq T\subseteq V(G)$ such that $\rho_G(S) = \rho_G^*(A)$ and $\rho_G(T)=\rho_G^*(B)$. By the previous observation, we have $\rho_G^*(A) + \rho_G^*(B) = \rho_G(S) + \rho_G(T) \geq \rho_G(S\cup T) +\rho_G(S\cap T)$. Since $(A\cup B) \subseteq (S\cup T)$ and $(A \cap B) \subseteq (S\cap T)$, by definition of $\rho_G^*$, we have the following:

\begin{equation}\label{potadd}
\rho_G^*(A)+\rho_G^*(B) \geq \rho_G^*(A\cup B) +\rho_G^*(A\cap B).
\end{equation}


Let $A$ and $S$ be disjoint subsets of $V(G)$ such that $S$ contains (at least) every vertex (not in $A$) that is adjacent to a vertex in $A$. We denote $E(A,S)$ the set of edges between vertices of $A$ and $S$. By definition, $\rho_G(A\cup S) = 9|A\cup S| - 7|E(G[A\cup S])| = 9(|A| + |S|) - 7(|E(G[A])|+|E(G[S])|+|E(A,S)|) = (9|S| - 7|E(G[S])|) + (9|A| - 7|E(G[A])|) - 7|E(A,S)| = \rho_G(S) + \rho_G(A) - 7|E(A,S)|$. Since $\mad(G)\leq \frac{18}{7}$, we know that $\rho_G(A\cup S)\geq 0$. Thus, $\rho_G(S)\geq 7|E(A,S)| - \rho_G(A)$. Observe that $S\subseteq V(G-A)$ and the previous inequality holds for any $K\subseteq V(G-A)$ that contains $S$. Moreover for every $K$ that contains $S$ we have $|E(A,S)|=|E(A,K)|$ by definition of $S$. Hence, the following also holds:


\begin{equation}\label{potlowerbound}
\rho_{G-A}^*(S)\geq 7|E(A,S)| - \rho_G(A).
\end{equation}

\begin{lemma} \label{potlemma}
Suppose graph $H$ verifies $\mad(H)\leq \frac{18}{7}$. Let $k\geq 0$ and $u,v\in V(H)$. Moreover assume that $\rho_H^*(\{u,v\})\geq 7 - 2k$. Let $H'=H+P$ be the graph obtained from $H$ in which we add a path $P$ with $k$ internal vertices of degree 2 between $u$ and $v$ ($P$ is just an edge when $k=0$), then $\mad(H')\leq \frac{18}{7}$ (equivalently, $\forall T \subseteq V(H'), \rho_{H'}(T)\ge 0$). 
\end{lemma}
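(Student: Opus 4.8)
The plan is to take an arbitrary $T \subseteq V(H')$ and show $\rho_{H'}(T) \geq 0$, splitting into cases according to how many of the new path vertices $T$ contains and whether $u, v \in T$. Write $P$'s internal vertices as $w_1, \dots, w_k$ (so $P$ has edges $uw_1, w_1w_2, \dots, w_{k-1}w_k, w_kv$; when $k = 0$, $P$ is the single edge $uv$). The easy case is when $T$ contains none of the $w_i$ and at most one of $u, v$: then $G'[T]$ and $G[T]$ differ in at most nothing (the edge $uv$ is present only when $k = 0$ and both endpoints are in $T$), so $\rho_{H'}(T) = \rho_H(T) \geq 0$ directly, since $\mad(H) \leq \frac{18}{7}$ gives $\rho_H \geq 0$ on all subsets.

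The substantive case is when $T$ contains some internal vertices of $P$ and possibly $u$ and/or $v$. First I would reduce to the case $\{u,v\} \subseteq T$: if $T$ misses, say, $v$, then the component of $T \cap P$ hanging off $u$ is a subpath $u, w_1, \dots, w_j$ with $w_{j+1} \notin T$; removing $w_1, \dots, w_j$ from $T$ deletes $j$ vertices and $j$ edges, changing $\rho$ by $9j - 7j = 2j \geq 0$, so it suffices to check the smaller set. Hence assume $u, v \in T$ and let $a \in \{0, 1, \dots, k\}$ be the number of $w_i$'s in $T$. Write $T = T_0 \cup W$ where $T_0 = T \setminus \{w_1, \dots, w_k\}$ (so $u, v \in T_0 \subseteq V(H)$) and $|W| = a$. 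The edges of $H'[T]$ are the edges of $H[T_0]$ together with the edges of $P[T \cap V(P)]$; the latter number is at most $a + 1$ (a subset of a path on $k$ internal vertices plus two endpoints, with $u, v$ present, spans at most $a+1$ edges), and is exactly $a+1$ only when all of $w_1, \dots, w_k$ are in $T$, i.e. $a = k$. So
\[
\rho_{H'}(T) = \rho_H(T_0) + 9a - 7\cdot(\#\text{edges of }P\text{ inside }T) \geq \rho_H(T_0) + 9a - 7(a+1) = \rho_H(T_0) + 2a - 7.
\]
Since $\rho_H(T_0) \geq \rho_H^*(\{u,v\})$ by definition of the potential function (as $\{u,v\} \subseteq T_0$), and the hypothesis gives $\rho_H^*(\{u,v\}) \geq 7 - 2k \geq 7 - 2a$ when... wait, that inequality goes the wrong way, so this is where care is needed: the clean bound $\#\text{edges} \leq a+1$ is wasteful. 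The point is that if $a < k$ then at least one $w_i$ is missing, so $P[T\cap V(P)]$ is a disjoint union of subpaths and has at most $a$ edges, giving $\rho_{H'}(T) \geq \rho_H(T_0) + 9a - 7a = \rho_H(T_0) + 2a \geq 0$. The only remaining case is $a = k$ (all internal vertices of $P$ in $T$), where indeed there are $k+1$ edges of $P$ inside $T$ and
\[
\rho_{H'}(T) \geq \rho_H(T_0) + 9k - 7(k+1) = \rho_H(T_0) + 2k - 7 \geq \rho_H^*(\{u,v\}) + 2k - 7 \geq (7 - 2k) + 2k - 7 = 0,
\]
using $\rho_H(T_0) \geq \rho_H^*(\{u,v\})$ and the hypothesis $\rho_H^*(\{u,v\}) \geq 7 - 2k$. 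This closes all cases.

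The main obstacle, such as it is, is bookkeeping: being careful that the case "$T$ misses $v$ (or $u$)" genuinely reduces to a strictly smaller instance without circularity, and that the count of $P$-edges inside $T$ is $k+1$ \emph{only} when every internal vertex and both endpoints lie in $T$ — it is precisely this configuration that forces us to invoke the hypothesis $\rho_H^*(\{u,v\}) \geq 7 - 2k$, and every other configuration is handled by the trivial slack $9 > 7$ on vertex-versus-edge weights. One should also note the equivalence stated parenthetically in the lemma: $\mad(H') \leq \frac{18}{7} \iff \rho_{H'}(T) \geq 0$ for all $T$, which is exactly the remark made just after the definition of $\rho_G$, applied to $H'$; so proving the displayed inequality for all $T$ is the whole task.
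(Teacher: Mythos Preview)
Your argument is correct and follows the same route as the paper: reduce to the case $V(P)\subseteq T$, then compute $\rho_{H'}(T)=\rho_H(T\setminus P)+2k-7\ge \rho_H^*(\{u,v\})+2k-7\ge 0$. One small bookkeeping gap: your reduction ``if $T$ misses $v$, peel off the subpath $w_1,\dots,w_j$ hanging off $u$'' does not cover the possibility that $T$ contains further $w_i$'s disconnected from $u$ inside $P[T\cap V(P)]$ (nor the case $u\notin T$); after your peel-off the smaller set need not land in any of your declared cases. The fix is immediate --- either iterate the removal on any remaining degree-$\le 1$ vertex of $T\cap\{w_1,\dots,w_k\}$, or simply observe that whenever $\{u,v\}\not\subseteq T$ the set $T\cap V(P)$ has at most $a+1$ vertices and hence $P[T\cap V(P)]$, being a forest, has at most $a$ edges, giving $\rho_{H'}(T)\ge \rho_H(T_0)+9a-7a\ge 0$ directly. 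The paper compresses all of this into one line by taking $T$ of \emph{minimum} potential and noting that every vertex of such a $T$ must have degree at least $2$ in $H'[T]$, which forces $V(P)\subseteq T$ whenever $T$ meets $P$.
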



\begin{proof}

Indeed, every subset $S\subseteq V(H')$ that does not contain any vertex or edges of $P$ is a subset of $V(H)$ so $\rho_{H'}(S)=\rho_H(S)\geq 0$. Now, consider a vertex set $T$, intersecting with $P$, with the minimum potential. Observe that vertices in $T$ have degree, in $H'[T]$, at least 2. Otherwise, it suffices to remove a vertex of degree 0 or 1 from $T$ and we obtain a set with lower potential (which contradicts the minimality of $\rho(T)$) as removing an isolated vertex decreases the potential by 9 and removing a vertex of degree 1 decreases the potential by $9-7=2$. Consequently, $T$ must contain the whole path $P$ as well as $u,v$. Observe that $T-P$ is a subset of $V(H)$ that contains $u,v$ and $\rho_{H'}(T)=\rho_H(T-P)+9k-7(k+1) = \rho_H(T-P) + 2k - 7 \geq \rho^*_H(\{u,v\}) + 2k - 7 \geq 7 - 2k + 2k - 7 =0$.

\end{proof}


\begin{observation} \label{pot observation}
Let $0\leq k\leq 3$, observe that in the proof of \Cref{potlemma}, adding a path $P$ between $u$ and $v$ in $H$ with $k$ internal 2-vertices decreases the potential of every set containing $\{u,v\}$ by at most $7-2k$ in $H+P$. In other words, for every $S\subseteq V(H+P)$ such that $\{u,v\}\subseteq S$, $\rho^*_{H+P}(S) \geq \rho^*_{H}(S-P)-(7-2k)$ ($S-P$ still contains $\{u,v\}$).

\end{observation}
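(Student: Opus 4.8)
The plan is to rerun the computation in the proof of \Cref{potlemma}, but this time tracking $\rho^*$ of an arbitrary set rather than merely certifying that all potentials in $H+P$ stay nonnegative. Fix $S\subseteq V(H+P)$ with $\{u,v\}\subseteq S$ and choose $T\supseteq S$ attaining the minimum in $\rho^*_{H+P}(S)$, so that $\rho^*_{H+P}(S)=\rho_{H+P}(T)$. Write the added path as $P=u p_1 p_2\cdots p_k v$ with internal $2$-vertices $p_1,\dots,p_k$, and put $T_0=T-P=T\setminus\{p_1,\dots,p_k\}$; then $T_0\subseteq V(H)$ and $\{u,v\}\subseteq S-P\subseteq T_0$.

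First I would isolate the effect of $P$ on the fixed set $T$. Since $E(H+P)$ is $E(H)$ together with the $k+1$ edges of $P$, and the $p_i$ carry no edge of $H$, one gets $\rho_{H+P}(T)=\rho_H(T_0)+9j-7e$, where $j=|T\cap\{p_1,\dots,p_k\}|$ and $e$ is the number of edges of $P$ with both endpoints in $T$. The crucial combinatorial point is how large $e$ can be given that $u$ and $v$ are always in $T$: writing $X=\{u,v\}\cup(T\cap\{p_1,\dots,p_k\})$, the vertices of $X$ form a union of maximal runs of consecutive path-vertices, and a single run would force $X=V(P)$; hence $e\le |X|-2=j$ whenever $j<k$, while $e=k+1$ when $j=k$. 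Substituting, $9j-7e\ge 2j\ge 0$ if $j<k$, and $9j-7e=2k-7=-(7-2k)$ if $j=k$; since $0\le k\le 3$ makes $7-2k\ge 1>0$, in all cases $\rho_{H+P}(T)\ge \rho_H(T_0)-(7-2k)$.

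To finish, I would pass from $T_0$ to $S-P$: as $S-P\subseteq T_0\subseteq V(H)$, the definition of $\rho^*_H$ gives $\rho_H(T_0)\ge\rho^*_H(S-P)$, and chaining the two inequalities yields $\rho^*_{H+P}(S)=\rho_{H+P}(T)\ge\rho^*_H(S-P)-(7-2k)$, which is exactly the claim. (One even gets equality, by feeding the $H$-minimizer of $\rho^*_H(S-P)$, enlarged by $p_1,\dots,p_k$, back into the definition of $\rho^*_{H+P}(S)$; but only the displayed inequality is used later.)

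The one step that needs care is the edge-count bound $e\le j$ for $j<k$ — i.e. controlling how many edges of $P$ can survive inside an induced vertex set that is forced to contain both endpoints of $P$; the rest is the same bookkeeping as in \Cref{potlemma}. It is also worth checking the degenerate case $k=0$ separately, where $P$ is the single edge $uv$ (so $j=0=k$, $e=1$, and the formula reads $-(7-0)=-7$), under the usual tacit assumption $uv\notin E(H)$ so that $H+P$ remains simple.
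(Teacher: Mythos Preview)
Your proof is correct and follows essentially the same computation as the paper's Lemma~7 (\Cref{potlemma}); the paper's observation is really just a pointer back to that calculation, so your argument is, if anything, more explicit than what the paper writes. The one place you differ slightly is in handling the case $j<k$: the paper's intended route (as in the proof of \Cref{potlemma}) is to use minimality of $T$ among supersets of $S$ to force $T$ to contain all of $P$---filling a gap of $g\le k\le 3$ missing internal vertices changes the potential by $9g-7(g+1)=2g-7<0$, contradicting minimality---so that only the case $j=k$ survives. Your direct edge-count bound $e\le j$ via the run decomposition reaches the same conclusion without invoking minimality at that step, and in fact shows the stronger pointwise statement that $\rho_{H+P}(T)\ge\rho_H(T-P)-(7-2k)$ for \emph{every} $T\supseteq\{u,v\}$, not just the minimizer; this is harmless here but worth noting.
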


\begin{lemma} \label{potlemma2}
Let $H$ be a graph, $u,v\in V(H)$ and $0\leq k\leq 3$. Let $H'=H+P$ be the graph obtained from $H$ in which we add a path $P$ with $k$ internal 2-vertices between $u$ and $v$ ($P$ is just an edge when $k=0$), then for every $A\subseteq V(H)$, $\rho^*_H(A)=\rho^*_{H'}(A)$ or $\rho^*_{H'}(A)\leq \rho^*_H(A)\leq \rho^*_H(A\cup\{u,v\})\leq \rho^*_{H'}(A) + (7-2k)$.
\end{lemma}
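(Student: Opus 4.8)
The plan is to split into the two cases that the statement itself suggests, according to whether a minimum-potential superset of $A$ in $H'$ can be taken to avoid $P$ or not. Fix $A\subseteq V(H)$ and let $S'\subseteq V(H')$ attain $\rho^*_{H'}(A)$, i.e. $A\subseteq S'$ and $\rho_{H'}(S')=\rho^*_{H'}(A)$. By the same argument as in the proof of \Cref{potlemma}, we may assume every vertex of $S'$ has degree at least $2$ in $H'[S']$ (removing a vertex of degree $0$ or $1$ only decreases the potential). Hence either $S'\cap V(P)=\emptyset$, in which case $S'\subseteq V(H)$ and $\rho^*_{H'}(A)=\rho_H(S')\geq\rho^*_H(A)$; or $S'$ contains all of $P$ together with $u$ and $v$.

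First I would handle the easy direction $\rho^*_{H'}(A)\leq\rho^*_H(A)$: this holds always, by \Cref{potsubgraph} applied to the subgraph $H$ of $H'$ containing $A$ (every candidate set for $\rho^*_H(A)$ lies in $V(H)\subseteq V(H')$). Combined with the previous paragraph, if the minimizer $S'$ avoids $P$ we get $\rho^*_{H'}(A)=\rho^*_H(A)$, which is the first alternative in the statement. So assume from now on that the minimizer $S'$ must contain $P\cup\{u,v\}$. Then $S'-P\subseteq V(H)$ contains $A\cup\{u,v\}$, and exactly as in \Cref{potlemma}, $\rho_{H'}(S')=\rho_H(S'-P)+9k-7(k+1)=\rho_H(S'-P)-(7-2k)\geq\rho^*_H(A\cup\{u,v\})-(7-2k)$. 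The inequality $\rho^*_H(A)\leq\rho^*_H(A\cup\{u,v\})$ is \Cref{potsuperset}. This yields $\rho^*_{H'}(A)\geq\rho^*_H(A\cup\{u,v\})-(7-2k)$, i.e. $\rho^*_H(A\cup\{u,v\})\leq\rho^*_{H'}(A)+(7-2k)$, which is the last inequality of the chain. Together with $\rho^*_{H'}(A)\leq\rho^*_H(A)\leq\rho^*_H(A\cup\{u,v\})$ this is the second alternative.

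For the remaining middle inequality $\rho^*_{H'}(A)\leq\rho^*_H(A)$ in that chain we reuse the bound just established, and for $\rho^*_H(A)\leq\rho^*_H(A\cup\{u,v\})$ we again invoke \Cref{potsuperset}; no new work is needed. The one point that needs a little care — and which I expect to be the main (minor) obstacle — is the reduction to minimizers $S'$ with all degrees at least $2$: one must check this can be done while still keeping $A\subseteq S'$, i.e. that the vertices we are tempted to delete are not forced to stay. This is fine because deleting a vertex of degree $\leq 1$ strictly decreases $\rho_{H'}$, so such a vertex cannot lie in a minimizer unless deleting it would violate $A\subseteq S'$; but a vertex of $A$ of degree $\leq 1$ in $H'[S']$ that we are forbidden to delete causes no problem, since in that situation we can instead argue directly. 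Actually the cleanest route is: among all sets realizing $\rho^*_{H'}(A)$, pick one, call it $S'$, of minimum size; then any vertex of $S'\setminus A$ has degree $\geq2$ in $H'[S']$, and if some vertex of $A$ has degree $\leq 1$ we can still push through the computation because the path $P$, if it meets $S'$ at all, meets it in an internal $2$-vertex which is not in $A$ and whose neighbours along $P$ must then also be in $S'$ (else that $2$-vertex would have degree $\leq 1$), forcing all of $P\cup\{u,v\}\subseteq S'$ and reducing us to the second case. Spelling this dichotomy out carefully is the only delicate part; the arithmetic is identical to \Cref{potlemma}.
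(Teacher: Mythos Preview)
Your proposal is correct and follows essentially the same route as the paper's proof: pick a minimizer $S'$ for $\rho^*_{H'}(A)$, argue it either misses the new path entirely (giving $\rho^*_{H'}(A)=\rho^*_H(A)$) or contains all of $P\cup\{u,v\}$, and in the latter case strip $P$ and compare to $\rho^*_H(A\cup\{u,v\})$ via the arithmetic $9k-7(k+1)=-(7-2k)$.

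Two small remarks. First, the paper phrases the dichotomy as ``$\{u,v\}\subseteq S$'' versus ``$\{u,v\}\not\subseteq S$'', which is slightly cleaner than your ``$S'\cap V(P)=\emptyset$'' versus ``$S'\supseteq P\cup\{u,v\}$'': for $k=0$ your two cases overlap (there are no internal vertices, so $S'\cap V(P)=\emptyset$ is vacuous even when $u,v\in S'$ and the new edge $uv$ contributes), and for $k\ge 1$ the intermediate case ``$u\in S'$, $v\notin S'$, no internal vertex in $S'$'' sits in neither of your stated alternatives as written. The paper's split avoids both issues at once. Second, your third paragraph overthinks the degree-reduction step. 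The only vertices you ever want to delete to force the dichotomy are \emph{internal} vertices of $P$, and those are new vertices not in $V(H)\supseteq A$; hence they are never in $A$ and can always be removed from a minimizer without violating $A\subseteq S'$. No further care is needed.
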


\begin{proof}
First, by \Cref{potsubgraph}, $\rho^*_H(A)\geq \rho^*_{H+P}(A)$.
Let us consider $S\subseteq V(H+P)$ such that $A\subseteq S$ and $\rho_{H+P}(S)=\rho_{H+P}^*(A)$. Note that $\rho_{H+P}(S)=\rho^*_{H+P}(S)$ since $\rho_{H+P}(S)\geq \rho^*_{H+P}(S)$ by definition of $\rho^*$, and $\rho_{H+P}(S)\leq \rho^*_{H+P}(S)$ or else, it means there exists $T$ such that $A\subseteq S \subset T$ and $\rho_{H+P}(T)<\rho_{H+P}(S)=\rho^*_{H+P}(A)$ which is a contradiction.  

If $\{u,v\}\not\subseteq S$, then $S\subseteq V(H)$ as $S$ cannot intersect $P$ by minimality of $\rho_{H+P}(S)$ (or else it would contain a vertex, of degree 0 or 1 in $H'[S]$, whose removal would decrease the potential). Thus, $\rho_H(S)=\rho_{H+P}(S)$. As a result, $\rho^*_H(A)\leq \rho_H(S)=\rho_{H+P}(S) = \rho_{H+P}^*(A)$.

If $\{u,v\}\subseteq S$, then $S$ contains $P$ by minimality of $\rho_{H+P}(S)$ (or else by adding $P$ to $S$, we would decrease the potential by $7(k+1)-9k=7-2k\geq 1$). By \Cref{pot observation}, $\rho_H^*(S-P)-(7-2k)\leq \rho^*_{H+P}(S)$. So, by \Cref{potsuperset}, $\rho^*_H(A)\leq \rho^*_H(A\cup\{u,v\}) \leq \rho^*_H(S-P)\leq \rho_{H+P}^*(S)+(7-2k) = \rho_{H+P}(S)+(7-2k) = \rho_{H+P}^*(A) +(7-2k)$.

\end{proof}

From now on, we will write $\rho^*_G(v_0v_1\dots v_i)$ instead of $\rho^*_G(\{v_0,v_1,\dots,v_i\})$ for conciseness. Also, for a graph $H$, we will say $\mad(H)\leq \frac{18}{7}$ instead of ``for all $S\subseteq V(H)$, $\rho^*_H(S)\geq 0$'' which is equivalent.


\section{Proof of \Cref{theorem}}
\label{sec3}

\paragraph{Notations and drawing conventions.} For $v\in V(G)$, the \emph{2-distance neighborhood} of $v$, denoted $N^*_G(v)$, is the set of 2-distance neighbors of $v$, which are vertices at distance at most two from $v$, not including $v$. We also denote $d^*_G(v)=|N^*_G(v)|$. We will drop the subscript and the argument when it is clear from the context. Also for conciseness, from now on, when we say ``to color'' a vertex, it means to color such vertex differently from all of its colored neighbors at distance at most two. Similarly, any considered coloring will be a 2-distance coloring.

Some more notations:
\begin{itemize}
\item A \emph{$d$-vertex} ($d^+$-vertex, $d^-$-vertex) is a vertex of degree $d$ (at least $d$, at most $d$). A $(\db{d}{e})$-vertex is a vertex with degree between $d$ and $e$ included.
\item A \emph{$k$-path} ($k^+$-path, $k^-$-path) is a path of length $k+1$ (at least $k+1$, at most $k+1$) where the $k$ internal vertices are 2-vertices.
\item A \emph{$(k_1,k_2,\dots,k_d)$-vertex} is a $d$-vertex incident to $d$ different paths, where the $i^{\rm th}$ path is a $k_i$-path for all $1\leq i\leq d$.
\end{itemize}
As a drawing convention for the rest of the figures, black vertices will have a fixed degree, which is represented, and white vertices may have a higher degree than what is drawn.


Let $G$ be a counterexample to \Cref{theorem} with the fewest number of vertices plus edges. Graph $G$ has maximum degree $\Delta=7$. The purpose of the proof is to prove that $G$ cannot exist. In the following we will study the structural properties of $G$ (\Cref{tutu}). We will then apply a discharging procedure (\Cref{tonton}). 

Since we have $\rho_G(V) = 9|V| - 7|E| \geq 0$, we must have 
\begin{equation}\label{equation}
\sum_{v\in V(G)} \left(7d(v)-18\right) \leq 0
\end{equation}

We assign to each vertex $v$ the charge $\mu(v)=7d(v)-18$. To prove the non-existence of $G$, we will redistribute the charges preserving their sum and obtaining a positive total charge, which will contradict \Cref{equation}.


\subsection{Structural properties of $G$\label{tutu}}

\begin{lemma}\label{connected}
Graph $G$ is connected.
\end{lemma}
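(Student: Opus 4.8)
The plan is to argue by the minimality of $G$ in the usual way. Suppose $G$ were disconnected, with connected components $G_1, G_2, \dots, G_m$ for some $m \geq 2$. Each component $G_i$ is a proper subgraph of $G$ (fewer vertices plus edges), and since $\rho_G^*(A) \geq 0$ for all $A \subseteq V(G)$, the same holds for each $G_i$: indeed any $A \subseteq V(G_i)$ satisfies $\rho_{G_i}^*(A) \geq \rho_G^*(A) \geq 0$ by \Cref{potsubgraph}. Moreover each component has maximum degree at most $\Delta = 7$.

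The mild subtlety is that \Cref{theorem} is stated for graphs with $\Delta$ exactly equal to $7$, whereas a component $G_i$ might have maximum degree strictly less than $7$. I would handle this by the standard observation that a $2$-distance $(\Delta(G)+1)$-coloring only gets easier when the maximum degree drops: if $\Delta(G_i) < 7$, then $G_i$ has $\mad \le \frac{18}{7}$, so we may attach to $G_i$ a small gadget (for instance a path of $2$-vertices, or a pendant structure) raising its maximum degree to exactly $7$ while keeping $\mad \le \frac{18}{7}$ and keeping it a proper subgraph-sized instance — or, more cleanly, simply invoke the fact that being a counterexample with the fewest vertices plus edges forces every component already to be colorable: apply minimality to $G_i$ directly, noting $G_i$ satisfies the hypothesis ``$\rho^*\ge 0$'' and either has $\Delta = 7$ (done by minimality) or $\Delta \le 6$, in which case one appeals to a monotonicity remark. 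I expect the cleanest write-up is: each $G_i$ has strictly fewer vertices plus edges than $G$, satisfies $\rho^*_{G_i} \ge 0$, and has $\Delta(G_i) \le 7$; by minimality of $G$ (together with the trivial fact that the result for smaller $\Delta$ reduces to the $\Delta = 7$ case, or is anyway easier), $G_i$ admits a $2$-distance $(\Delta(G_i)+1)$-coloring, hence a $2$-distance $(\Delta(G)+1)$-coloring.

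Finally, combine the colorings: since the $G_i$ are pairwise at infinite distance in $G$, no $2$-distance constraint links a vertex of $G_i$ to a vertex of $G_j$ for $i \neq j$, so the union of the colorings of the $G_i$ (each using colors from $\{1, \dots, \Delta(G)+1\}$) is a valid $2$-distance $(\Delta(G)+1)$-coloring of $G$. This contradicts the assumption that $G$ is a counterexample, so $G$ must be connected.

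The main obstacle, such as it is, is purely bookkeeping: making sure the induction hypothesis is applicable to a component whose maximum degree may be smaller than $7$. I would resolve this with a one-line monotonicity remark rather than a gadget construction, since the statement ``$2$-distance $(\Delta+1)$-colorable'' with a smaller palette constraint for smaller $\Delta$ is not automatically weaker — but a disconnected counterexample with a low-degree component can be padded, or one simply notes that the proof's discharging argument never uses $\Delta = 7$ in a way that breaks for a component, so the minimal counterexample is necessarily connected.
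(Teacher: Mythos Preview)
Your approach is the same as the paper's: argue that a component of a disconnected $G$ would be a smaller counterexample, contradicting minimality. The paper dispatches this in a single line (``Otherwise a component of $G$ would be a smaller counterexample'') and does not address the $\Delta(G_i) < 7$ subtlety you raise; your extra discussion of that point is more scrupulous than the original, though you ultimately leave it at the same level of rigor by deferring to a monotonicity remark you do not spell out.
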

Otherwise a component of $G$ would be a smaller counterexample.

\begin{lemma}\label{minimumDegree}
The minimum degree of $G$ is at least 2.
\end{lemma}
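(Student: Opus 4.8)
The plan is to argue by minimality of $G$, in the style of the preceding two lemmas. Suppose for contradiction that $G$ contains a vertex $v$ of degree at most $1$. We want to produce a smaller counterexample or directly extend a coloring, obtaining a contradiction with the minimality of $G$ (and the fact that $G$ is a counterexample to \Cref{theorem}).

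First I would handle the degree-$0$ case: if $v$ is isolated, then since $G$ is connected (\Cref{connected}) and presumably has more than one vertex, this is immediate; if $G$ is a single vertex it is trivially $2$-distance $(\Delta+1)$-colorable. So the substantive case is $d(v)=1$. Let $u$ be the unique neighbor of $v$. Consider $G' = G - v$. Then $G'$ has fewer vertices (and edges) than $G$, still has $\mad(G') \le \mad(G) \le \frac{18}{7}$ — equivalently $\rho^*_{G'}(A) \ge 0$ for all $A \subseteq V(G')$, which follows from \eqref{potsubgraph} since $G'$ is a subgraph of $G$ — and has maximum degree at most $7$. I need $\Delta(G') = 7$ to invoke minimality directly; if deleting $v$ drops the maximum degree (only possible if $d(u) = 7$ and $u$ is the unique vertex of degree $7$, which forces $v$ to be counted, impossible since $d(v)=1 \ne 7$), so in fact $\Delta(G') = 7$ still holds unless $G'$ has no vertex of degree $7$ at all; in that degenerate case one can add a pendant structure or simply note $G'$ is colorable by the greedy/trivial bound, but cleanest is to observe $u$ keeps its degree $\ge 1$ and if $\Delta(G)=7$ is witnessed by a vertex other than $v$ (which it must be) that vertex survives in $G'$. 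Hence by minimality $G'$ admits a $2$-distance $(\Delta+1)$-coloring $\phi$ with $\Delta = 7$, i.e. with $8$ colors.

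Now I would extend $\phi$ to $v$. The vertices at distance at most $2$ from $v$ in $G$ that are already colored are: $u$, and the neighbors of $u$ other than $v$. Since $d(u) \le 7$, the neighbors of $u$ number at most $7$, one of which is $v$, so at most $6$ are colored; together with $u$ itself that is at most $7$ forbidden colors for $v$. As we have $8 = \Delta + 1$ colors available, there is a free color for $v$, and the extended coloring is a $2$-distance $(\Delta+1)$-coloring of $G$, contradicting that $G$ is a counterexample.

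The main obstacle — really the only subtlety — is the bookkeeping around whether $\Delta(G-v) = 7$ so that the minimality hypothesis applies verbatim, and making sure the ``$\mad \le \frac{18}{7}$'' hypothesis (in its $\rho^*$ form) is correctly inherited by the subgraph via \eqref{potsubgraph}. Both are routine: the degree count never decreases below $7$ because $v$ has degree $1$ and $\Delta(G)=7$ is witnessed elsewhere, and the potential condition is monotone under taking subgraphs. The counting argument for the extension ($|N^*_G(v)| \le d(u) \le 7 < 8$) is the same worst-case-neighborhood reasoning used throughout such proofs, so I would present it tersely.
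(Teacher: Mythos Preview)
Your approach is essentially the paper's: reduce by deleting $v$ (the paper deletes the edge $uv$, a cosmetic difference), color the smaller graph by minimality, then extend to $v$ using $|N^*_G(v)| \le d(u) \le 7 < 8$. The extension count is correct.

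However, your discussion of $\Delta(G')$ contains an error. You claim that if deleting $v$ drops the maximum degree, then $u$ is the unique degree-$7$ vertex, ``which forces $v$ to be counted, impossible since $d(v)=1 \ne 7$''; this is a non sequitur. Nothing prevents $u$ from being the sole vertex of degree $7$: in that case $u$ survives in $G'$ but with degree $6$, so $\Delta(G') = 6$, and your fallback (``a witness other than $v$ survives in $G'$'') fails for exactly this reason --- the witness can be $u$ itself, whose degree drops. The paper's own proof simply does not address this point; it removes the edge and invokes minimality without comment, so your instinct that something needs checking is sharper than the paper, but your attempted fix is wrong. The standard resolution is to phrase minimality with respect to a fixed palette of $8$ colors (i.e., take $G$ minimal among graphs with $\mad \le \tfrac{18}{7}$, $\Delta \le 7$, and no $2$-distance $8$-coloring); then any subgraph $G'$ automatically satisfies the hypotheses regardless of whether $\Delta(G')$ equals $7$ or not.
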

By \Cref{connected}, the minimum degree is at least 1. If $G$ contains a degree 1 vertex $v$, then we can simply remove the unique edge incident to $v$ and 2-distance color the resulting graph, which is possible by minimality of $G$. Then, we add the edge back and color $v$ (at most 7 constraints and 8 colors).

\medskip
\begin{lemma}\label{counting lemma}
Let $w$ be a vertex of $V$ that is adjacent to $k$  vertices $u_i$ $(k\le d(w))$, each satisfying $d^*(u_i)\le \Delta +i-1$ for $1\le i\le k$. Then we have $d^*(w)\ge \Delta+k+1$.
\end{lemma}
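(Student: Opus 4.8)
The plan is to prove this by contradiction, combined with the minimality of $G$: suppose $d^*(w) \le \Delta + k$, and use this to build a smaller graph that is still $2$-distance $(\Delta+1)$-colorable, then extend the coloring back to $G$. First I would delete from $G$ the edge $wu_1$ (or, if it helps the potential bookkeeping, all the edges $wu_i$ at once — but let me describe the one-vertex-at-a-time version, which I expect is what the authors do). By minimality, $G - wu_1$ admits a $2$-distance $(\Delta+1)$-coloring $\phi$. Now I want to recolor, in order, the vertices $u_1, u_2, \dots, u_k$ and finally re-examine $w$. Actually the cleaner approach: remove the edges $wu_1, \dots, wu_k$, color the smaller graph, then color $w$ first (it now has small enough $2$-distance degree in the reduced graph, since each $u_i$ contributes less) — wait, that is not quite it either, because deleting $wu_i$ changes distances. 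Let me instead argue greedily in the right order.

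The key observation is the following greedy/counting argument. Uncolor $w$ and $u_1, \dots, u_k$ in a valid coloring of $G$ minus suitable edges; then recolor $u_k, u_{k-1}, \dots, u_1$ and then $w$. When we come to color $u_i$, its forbidden colors come from its at most $d^*(u_i) \le \Delta + i - 1$ two-distance neighbors; but among those neighbors are $u_{i+1}, \dots, u_k$ and $w$ (all currently uncolored when we reach $u_i$ in this order? no — $w$ is colored last, and $u_{i+1},\dots,u_k$ are already colored when we process $u_i$). I need to set up the order so that each vertex, when colored, sees at most $\Delta$ colored neighbors. Process $w$ first: its two-distance degree is assumed $\le \Delta + k$, but $u_1, \dots, u_k$ are uncolored, and they are all two-distance neighbors of $w$ (in fact neighbors), so $w$ sees at most $\Delta + k - k = \Delta$ colors — good, color $w$. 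Then process $u_1$: it has $\le d^*(u_1) \le \Delta$ two-distance neighbors, so at most $\Delta$ colors forbidden — color it. Then $u_2$: $\le d^*(u_2) \le \Delta + 1$ two-distance neighbors, but $u_1$ is among them (is it? $u_1$ and $u_2$ are at distance $\le 2$ via $w$), so at most $\Delta + 1 - 1 = \Delta$ colored neighbors when we reach it — color it. Inductively, when we reach $u_i$, the vertices $u_1, \dots, u_{i-1}$ are all within distance $2$ of $u_i$ (through $w$) and all colored, contributing at most $i-1$ to the count, leaving at most $d^*(u_i) - (i-1) \le \Delta$ forbidden colors. Hence all of $w, u_1, \dots, u_k$ get colored, contradicting that $G$ is a counterexample — but this requires $G$ to actually fail to be colorable, so I should phrase it as: this shows any $2$-distance $(\Delta+1)$-precoloring of $G - \{w, u_1, \dots, u_k\}$ (more precisely, of $G$ with these $k+1$ vertices uncolored, which exists by minimality after deleting an edge or vertex) extends, so $G$ is not a counterexample.

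Let me restate the minimality setup carefully since that is where the subtlety lies. Take $G' = G - w$ (delete $w$ and all incident edges). Then $|V(G')| + |E(G')| < |V(G)| + |E(G)|$, and $\mad(G') \le \mad(G) \le \frac{18}{7}$, and $\Delta(G') \le 7$; if $\Delta(G') < 7$ we may need a small separate argument, but generically $G'$ is still a valid instance, so by minimality $G'$ has a $2$-distance $8$-coloring $\phi$. Now uncolor $u_1, \dots, u_k$ in $\phi$ (their colors in $G'$ may conflict with what $w$ will need) and add $w$ back with no color. Then run the greedy coloring in the order $w, u_1, u_2, \dots, u_k$ exactly as above: each vertex sees at most $\Delta = 7$ colored two-distance neighbors at the moment it is colored, by the counting in the previous paragraph (for $w$: all $k$ of the $u_i$ are uncolored; for $u_i$: the $i-1$ vertices $u_1,\dots,u_{i-1}$ are pairwise-with-$u_i$ within distance $2$ through $w$ and already colored, so they eat into the budget of $d^*(u_i) \le \Delta + i - 1$). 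Since we have $\Delta + 1 = 8$ colors available, each step succeeds, yielding a $2$-distance $8$-coloring of $G$ — contradiction.

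The main obstacle I anticipate is not the greedy count itself (which is the clean heart of the argument) but the bookkeeping around the two-distance neighborhoods: one must verify that $u_i$ and $u_j$ really are at distance at most $2$ in $G$ (immediate, since both are adjacent to $w$), and, more delicately, that uncoloring $u_1, \dots, u_k$ and re-adding $w$ does not leave some other vertex of $G$ improperly colored — it does not, because the only vertices whose colors we touch are $w$ (new) and the $u_i$, and every constraint involving them is re-checked when they are (re)colored. A secondary point to handle is the edge-case $\Delta(G-w) < \Delta(G)$, but deleting a single vertex from a $\Delta$-regular-ish sparse graph and re-adding it is standard, and in any case the statement only needs colorability with $\Delta(G)+1 = 8$ colors, which is monotone under the degree dropping. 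So the proof is: delete $w$, invoke minimality, uncolor the $u_i$, greedily recolor $w, u_1, \dots, u_k$ using the distance-$2$ adjacencies among them to absorb the surplus in each $d^*(u_i)$.
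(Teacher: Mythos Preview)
Your overall strategy matches the paper's: assume $d^*(w)\le\Delta+k$, pass to a smaller graph, color it by minimality, then greedily extend to $w$ and the $u_i$. (The paper removes the edges $wu_1,\dots,wu_k$ rather than the vertex $w$, but that is a minor difference.) Coloring $w$ first is also correct, for exactly the reason you give.

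The gap is in the order in which you recolor the $u_i$. You process them as $u_1,u_2,\dots,u_k$, and justify the step for $u_i$ by saying ``$u_1,\dots,u_{i-1}$ are all within distance $2$ of $u_i$ and all colored, contributing at most $i-1$ to the count, leaving at most $d^*(u_i)-(i-1)\le\Delta$ forbidden colors.'' This is backwards: colored two-distance neighbors \emph{add} to the forbidden count, they do not get subtracted. What you may subtract are the currently \emph{uncolored} two-distance neighbors, which in your order are $u_{i+1},\dots,u_k$. That yields at most $d^*(u_i)-(k-i)\le\Delta+2i-1-k$ forbidden colors, and for $i=k$ this is $\Delta+k-1$, which exceeds $\Delta$ as soon as $k\ge 2$. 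So the greedy extension fails at the last $u_i$ in your order.

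The fix is exactly what the paper does: after coloring $w$, process the $u_i$ in \emph{decreasing} index order $u_k,u_{k-1},\dots,u_1$. Then when you reach $u_i$, the still-uncolored vertices $u_1,\dots,u_{i-1}$ all lie in $N^*_G(u_i)$ (via $w$), so the number of forbidden colors is at most $d^*(u_i)-(i-1)\le(\Delta+i-1)-(i-1)=\Delta$, and the step goes through. With this single change your argument is complete.
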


\begin{proof}
Suppose by contradiction that $w$ is adjacent to $u_i$ with $d^*_G(u_i)\leq \Delta+i-1$ for $1\leq i \leq k$, but $d^*_G(w)\leq \Delta+k$ (see \Cref{fig:01}). We remove the edges $wu_i$ for $1\le i \le k$. By minimality of $G$, let $\phi_H$ be a coloring of $H=(V,E\setminus\{wu_1,\ldots,wu_k\})$.

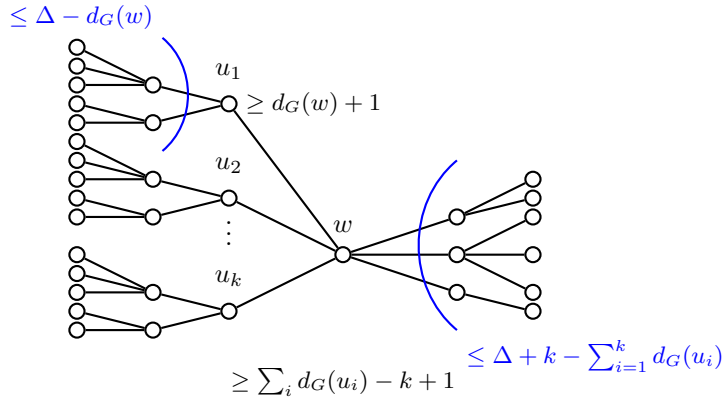
\begin{figure}[H]
\begin{center}
\begin{tikzpicture}[scale=0.5]
\begin{scope}[every node/.style={circle,thick,draw,minimum size=1pt,inner sep=2}]
    \node[label={above:$w$},label={below:\small $\geq \sum_i d_G(u_i)-k+1$}] (0) at (1,0) {};
    \node (10) at (4,1) {};
    \node (20) at (4,0) {};
    \node (30) at (4,-1) {};
    \node (11) at (6,2) {};
    \node (12) at (6,1.5) {};
    \node (21) at (6,1) {};
    \node (22) at (6,0) {};
    \node (23) at (6,-1) {};
    \node (31) at (6,-1.5) {};

    \node[label={above:$u_1$},label={right:\small $\geq d_G(w)+1$}] (100) at (-2,4) {};
    \node (110) at (-4,4.5) {};
    \node (120) at (-4,3.5) {};
    \node (111) at (-6,5.5) {};
    \node (112) at (-6,5) {};
    \node (113) at (-6,4.5) {};
    \node (121) at (-6,4) {};
    \node (122) at (-6,3.5) {};

    \node[label={above:$u_2$}] (200) at (-2,1.5) {};
    \node (210) at (-4,2) {};
    \node (220) at (-4,1) {};
    \node (211) at (-6,3) {};
    \node (212) at (-6,2.5) {};
    \node (213) at (-6,2) {};
    \node (221) at (-6,1.5) {};
    \node (222) at (-6,1) {};

    \node[label={above:$u_k$}] (300) at (-2,-1.5) {};
    \node (310) at (-4,-1) {};
    \node (320) at (-4,-2) {};
    \node (311) at (-6,0) {};
    \node (312) at (-6,-0.5) {};
    \node (313) at (-6,-1) {};
    \node (321) at (-6,-1.5) {};
    \node (322) at (-6,-2) {};
\end{scope}

\begin{scope}[every edge/.style={draw=black,thick}]
    \path (0) edge (10);
    \path (0) edge (20);
    \path (0) edge (30);
    \path (10) edge (11);
    \path (10) edge (12);
      \path (20) edge (21);
      \path (20) edge (22);
      \path (20) edge (23);
      \path (30) edge (31);
        \path (0) edge (100);
      \path (100) edge (110);
      \path (100) edge (120);
      \path (110) edge (111);
      \path (110) edge (112);
      \path (110) edge (113);
      \path (120) edge (121);
      \path (120) edge (122);
        \path (0) edge (200);
      \path (200) edge (210);
      \path (200) edge (220);
      \path (210) edge (211);
      \path (210) edge (212);
      \path (210) edge (213);
      \path (220) edge (221);
      \path (220) edge (222);
        \path (0) edge (300);
      \path (300) edge (310);
      \path (300) edge (320);
      \path (310) edge (311);
      \path (310) edge (312);
      \path (310) edge (313);
      \path (320) edge (321);
      \path (320) edge (322);
\end{scope}

\coordinate[label={below right:\small \textcolor{blue}{$\leq \Delta+k-\sum_{i=1}^k d_G(u_i)$}}] (A) at (4,-2);
\coordinate (B) at (4,2.5);
\coordinate[label={above left:\small \textcolor{blue}{$\leq \Delta-d_G(w)$}}] (C) at (-3.75,5.75);
\coordinate (D) at (-3.75,2.75);
\coordinate[label={above:$\vdots$}] (G) at (-2,0);

\draw[thick,blue] (A) to[bend left=50] (B);
\draw[thick,blue] (C) to[bend left=50] (D);
\end{tikzpicture}
\vspace{-1.5cm}
\caption{The configuration of~\Cref{counting lemma}.\label{fig:01}} 
\end{center}
\end{figure}


We extend $\phi$ to $G$ as follows :
\begin{enumerate}
\item We define $\phi_G(v)=\phi_H(v)$ for all $v\in V\setminus\{w,u_1,\ldots,u_k\}$.
\item We choose $\phi_G(w)$ a color that does not appear in $F_w = N^*_G(w)\setminus\{u_1,\dots,u_k\}$. We have $|F_w| = d^*_G(w) - k$. By hypothesis, we have $d^*_G(w)\leq \Delta+k$ and thus $|F_w| \le \Delta$. Thus, we can color $w$ since we have $\Delta+1$ colors.
\item One by one, from $k$ to 1, we choose $\phi_G(u_i)$ a color that does not appear in $F_{u_i} = N^*_G(u_i)\setminus\{u_1,\dots,u_{i-1}\}$. Since $d^*_G(u_i)\leq \Delta+i-1$, we have $|F_{u_i}| = d^*_G(u_i) - (i-1) \le (\Delta +i-1)-(i-1) = \Delta$. So, there remains at least one color for $u_i$.
\end{enumerate}

Observe that we 2-distance colored the vertices $w$, $u_1, \ldots, u_k$. Hence the obtained coloring $\phi_G$ is a 2-distance coloring of $G$, which is a contradiction.
\end{proof}

\begin{observation}\label{counting observation}
Let $w$ be a vertex of $V$ that is adjacent to $k$  vertices $u_i$, each satisfying $d^*(u_i)\le 7 = \Delta$ ($\le \Delta+i-1$) for $1\le i\le k$. Then we have $d^*(w)\ge \Delta+k+1 = 8+k$ due to \Cref{counting lemma}.
\end{observation}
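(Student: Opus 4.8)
The plan is to recognise \Cref{counting observation} as nothing more than the specialisation of \Cref{counting lemma} to the situation at hand, where $\Delta=7$; the entire proof is then a one-line verification that the hypothesis of \Cref{counting lemma} is met.

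First I would recall precisely what \Cref{counting lemma} requires: a vertex $w$ adjacent to $k\le d(w)$ vertices $u_1,\dots,u_k$ with $d^*(u_i)\le \Delta+i-1$ for every $1\le i\le k$. In the setting of the observation we are handed the \emph{stronger}, index-free bound $d^*(u_i)\le 7=\Delta$ for all $i$. Since $i\ge 1$ forces $i-1\ge 0$, we get $\Delta+i-1\ge \Delta=7\ge d^*(u_i)$, so each $u_i$ does satisfy $d^*(u_i)\le \Delta+i-1$; the side condition $k\le d(w)$ holds automatically because the $u_i$ are distinct neighbours of $w$. Hence the configuration of the observation is literally an instance of the configuration of \Cref{counting lemma} (for \emph{any} choice of ordering of the $u_i$), and that lemma immediately yields $d^*(w)\ge \Delta+k+1=8+k$, as claimed.

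I do not expect any obstacle here: all the real work was already carried out in \Cref{counting lemma} by the colour-counting argument (delete $wu_1,\dots,wu_k$, extend a colouring of the smaller graph by first colouring $w$ — using $|F_w|=d^*(w)-k\le\Delta$ — and then colouring $u_k,\dots,u_1$ in turn, using $|F_{u_i}|=d^*(u_i)-(i-1)\le\Delta$). The only reason to isolate the observation is practical: in the discharging argument of \Cref{tonton} one repeatedly wants to apply this bound to a bunch of neighbours of $w$ that are all $2$-distance-light in the strong sense $d^*\le\Delta$, and the hypothesis ``$d^*(u_i)\le\Delta$ for all $i$'' is symmetric in the $u_i$, hence trivial to check, whereas the graded hypothesis of \Cref{counting lemma} would otherwise force one to pick and justify an ordering of those neighbours.
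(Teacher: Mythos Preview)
Your proposal is correct and matches the paper's approach exactly: the observation is stated as an immediate consequence of \Cref{counting lemma}, with no separate proof given, and your verification that the uniform bound $d^*(u_i)\le\Delta$ implies the graded bound $d^*(u_i)\le\Delta+i-1$ is precisely the (trivial) check that justifies the phrase ``due to \Cref{counting lemma}''.
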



\begin{lemma} \label{4-path lemma}
Graph $G$ has no $4^+$-paths.
\end{lemma}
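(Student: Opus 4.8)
The plan is to argue by minimality, without invoking the potential machinery of \Cref{sec2}: delete one $2$-vertex from the interior of the path, $2$-distance $(\Delta(G)+1)=8$-color what remains using the minimality of $G$, and then extend the coloring greedily to the deleted vertex.

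Concretely, suppose $G$ contains a $4^+$-path; then there are vertices $w_0,w_1,w_2,w_3,w_4,w_5$ forming a path in $G$ with $w_1,w_2,w_3,w_4$ all $2$-vertices (the endpoints $w_0,w_5$, and any coincidences among the $w_i$, will play no role). Since $w_2,w_3,w_4$ are $2$-vertices, their neighborhoods are forced by the path: $N_G(w_3)=\{w_2,w_4\}$, $N_G(w_2)=\{w_1,w_3\}$ and $N_G(w_4)=\{w_3,w_5\}$, whence $N_G^*(w_3)\subseteq\{w_1,w_2,w_4,w_5\}$ and $d_G^*(w_3)\le 4$. Let $H=G-w_3$. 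As a subgraph of $G$, $H$ has $\mad(H)\le\mad(G)\le\frac{18}{7}$, equivalently $\rho_H^*(S)\ge 0$ for all $S\subseteq V(H)$ (this also follows from \eqref{potsubgraph}). Also $\Delta(H)=7$: as $\Delta(G)=7$, the graph $G$ has a $7$-vertex $z$; since $z\notin\{w_2,w_3,w_4\}$ and $z$ is not a neighbor of $w_3$, we get $d_H(z)=7$, and of course $\Delta(H)\le\Delta(G)=7$. Thus $H$ satisfies the hypotheses of \Cref{theorem} and has strictly fewer vertices and edges than $G$, so by the minimality of $G$ it admits a $2$-distance $8$-coloring $\phi$.

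Finally I would extend $\phi$ to $G$ by assigning $w_3$ a color distinct from those of the vertices of $N_G^*(w_3)$: each of them lies in $V(H)$ and is already colored by $\phi$, and there are at most $4<8$ of them, so a free color exists. This yields a $2$-distance $8$-coloring of $G$, contradicting the choice of $G$; hence $G$ has no $4^+$-path. I do not expect a genuine obstacle here — this is essentially a warm-up reduction — the only points needing (minor) care being the count $d_G^*(w_3)\le 4$ and the verification that $\Delta(H)=7$, the latter being exactly what licenses the appeal to minimality, since \Cref{theorem} speaks only about graphs of maximum degree $7$. (A heavier alternative, more in the spirit of \Cref{sec2}, would shrink the long path to a short one via \Cref{potlemma2} and \Cref{pot observation} and then recolor the removed $2$-vertices; but the direct deletion above is simpler and sufficient for this lemma.)
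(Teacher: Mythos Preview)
Your argument has a small but genuine gap. When you delete $w_3$ and take a $2$-distance coloring $\phi$ of $H=G-w_3$, you lose the constraint between $w_2$ and $w_4$: in $G$ they are at distance~$2$ (via $w_3$), but in $H$ the only neighbor of $w_2$ is $w_1$ and the only neighbor of $w_4$ is $w_5$, so generically $d_H(w_2,w_4)>2$ and $\phi$ may well assign $\phi(w_2)=\phi(w_4)$. In that case, no matter how you color $w_3$, the resulting coloring of $G$ is not a valid $2$-distance coloring, because $w_2$ and $w_4$ conflict. Your sentence ``This yields a $2$-distance $8$-coloring of $G$'' is therefore not justified as written.

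The fix is easy and in the spirit of your approach: after coloring $H$, uncolor $w_2$ as well; since $N_G^*(w_2)\subseteq\{w_0,w_1,w_3,w_4\}$ and $w_3$ is still uncolored, $w_2$ has at most three constraints and can be recolored to differ from $\phi(w_4)$; then color $w_3$ with at most four constraints. Alternatively (and this is what the paper does), appeal to \Cref{counting lemma}/\Cref{counting observation}: both $w_2$ and $w_3$ satisfy $d^*\le 4\le\Delta$, and since $w_3$ is adjacent to $w_2$ this forces $d^*(w_3)\ge\Delta+2$, a contradiction. That route avoids the $w_2$--$w_4$ issue because \Cref{counting lemma} removes edges (not a vertex) and explicitly recolors all affected endpoints.
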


\begin{proof}
Suppose $G$ contains a 4-path $v_0v_1\dots v_5$ (see \Cref{4-path lemma figure}). Then $d^*(v_2)=d^*(v_3)=4\leq \Delta$ which contradicts \Cref{counting observation}.
\end{proof}
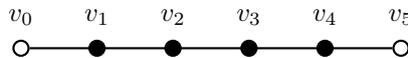
\begin{figure}[H]
\begin{center}
\begin{tikzpicture}{thick}
\begin{scope}[every node/.style={circle,thick,draw,minimum size=1pt,inner sep=2}]
    \node[label={above:$v_5$}] (0) at (0,0) {};

    \node[label={above:$v_0$}] (100) at (-5,0) {};

    \node[fill,label={above:$v_4$}] (1) at (-1,0) {};
    \node[fill,label={above:$v_3$}] (2) at (-2,0) {};
    \node[fill,label={above:$v_2$}] (3) at (-3,0) {};
    \node[fill,label={above:$v_1$}] (4) at (-4,0) {};
\end{scope}

\begin{scope}[every edge/.style={draw=black,thick}]
    \path (0) edge (100);
\end{scope}
\end{tikzpicture}
\caption{A 4-path.}
\label{4-path lemma figure}
\end{center}
\end{figure}

\begin{lemma}\label{3-path lemma}
A $3$-path has two distinct endvertices and both have degree $\Delta$.
\end{lemma}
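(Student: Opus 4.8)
The statement to prove is \Cref{3-path lemma}: a $3$-path has two distinct endvertices, and both have degree $\Delta=7$. So I need to rule out two things: (a) a $3$-path whose two endvertices coincide (i.e. a 2-vertex lying on a short cycle consisting of a few 2-vertices plus one other vertex), and (b) a $3$-path one of whose endvertices has degree $\le 6$. Let me denote the $3$-path by $v_0 v_1 v_2 v_3 v_4$, where $v_1, v_2, v_3$ are $2$-vertices and $v_0, v_4$ are the endvertices.

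Let me think about each part.

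For part (a), if $v_0 = v_4$, then $v_0 v_1 v_2 v_3$ forms a $4$-cycle with three consecutive $2$-vertices. Either delete an edge / the whole path and recolor by minimality: remove $v_1, v_2, v_3$ (and incident edges) from $G$, apply minimality to get a coloring of the smaller graph, then color $v_1, v_2, v_3$ back one at a time. Each $v_i$ has $d^*(v_i)$ at most... $v_2$ sees $v_0=v_4, v_1, v_3$, so $d^*(v_2) \le 3$; $v_1$ sees $v_0, v_2, v_3, v_4 = v_0$, so $d^*(v_1)\le 3$; similarly $v_3$. Even accounting for the already-colored neighbors this is way under $\Delta+1 = 8$, so there's always a free color. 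Actually, a cleaner route that the paper seems to favor is the potential method combined with the counting lemma, but for this degenerate case the direct recoloring argument is simplest. Alternatively note $d^*(v_2) \le \Delta$ trivially and invoke \Cref{counting observation} on... hmm, that needs a vertex adjacent to enough small-$d^*$ vertices. Let me just use the direct deletion argument for (a).

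For part (b), suppose WLOG $d(v_0) \le \Delta - 1 = 6$. I want a contradiction. The natural tool is the potential method: consider $H = G - \{v_1, v_2, v_3\}$ (remove the three internal $2$-vertices). This $H$ still has $\mad \le 18/7$ (it's a subgraph). By minimality, $H$ has a $2$-distance $8$-coloring $\phi$. Now I want to put $v_1, v_2, v_3$ back and extend. The obstruction is that $v_1$'s constraints come from $v_0$, its neighborhood $N^*(v_0)$ (the vertices at distance $\le 2$ from $v_1$ through $v_0$), plus $v_2, v_3$; and $d(v_0) \le 6$ means $|N^*_G(v_1)| \le d(v_0) + 2 \le 8$. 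Symmetrically $v_3$ sees at most $d(v_4) + 2 \le \Delta + 2 = 9$ neighbors — that's the problem endpoint, $v_4$ could have full degree. And $v_2$ sees only $v_0, v_1, v_3, v_4$, so $d^*(v_2)\le 4$. The order to recolor: color $v_2$ last (few constraints), but $v_3$ is the tight one. Hmm — if $v_4$ has degree $7$, then $v_3$ has $d^*(v_3) = 7 + 2 = 9 > 8$, so a naive greedy extension can fail; that's exactly why this needs the counting lemma or the potential method rather than pure greed.

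The resolution: actually I suspect the lemma is proved by contradiction using \Cref{counting lemma}/\Cref{counting observation} cleverly, OR by a potential argument that lets us *add* an edge (identifying or connecting vertices) to force $d^*$ down. Here's the counting approach: if $d(v_0)\le 6$, then $v_0$ is adjacent to the $2$-vertex $v_1$, and $d^*(v_1)$... I want to show some vertex is adjacent to too many low-$d^*$ vertices. Note $v_1$ has $d^*(v_1) \le d(v_0) + 2 \le 8 = \Delta+1$; not quite $\le \Delta$. Hmm. But $v_2$ has $d^*(v_2) \le 4 \le \Delta$, and $v_2$ is adjacent to $v_1$ and $v_3$. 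And $v_1$ is adjacent to $v_0$ and $v_2$. So consider $v_0$: it's adjacent to $v_1$. If I can also argue $d^*(v_1) \le \Delta$ I'd get via \Cref{counting observation} that $d^*(v_0) \ge \Delta + 2 = 9$, but $d^*(v_0) \le $ (sum over neighbors of $v_0$ of their degrees) which for $d(v_0)\le 6$ with one neighbor being the $2$-vertex $v_1$ is at most $6 + 5\cdot 6 = 36$... that's not a contradiction. So counting alone along $v_0$ doesn't obviously work.

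So I think the real proof is the **potential method**: take $H = G - \{v_1,v_2,v_3\}$, and use \Cref{potlowerbound} with $A = \{v_1,v_2,v_3\}$: $\rho_G(A) = 9\cdot 3 - 7\cdot 2 = 13$ (the path $v_1v_2v_3$ has $2$ internal edges), and $S$ = all neighbors of $A$ outside $A$, which is $\{v_0, v_4\}$, with $|E(A,S)| = 2$. So $\rho^*_{H}(\{v_0,v_4\}) = \rho^*_{G-A}(S) \ge 7\cdot 2 - 13 = 1 \ge 0$; more refined bounds give us room. Then apply \Cref{potlemma}-type reasoning / \Cref{potlemma2}: we can *add* a $2$-path (or a path with $k$ internal $2$-vertices) between $v_0$ and $v_4$ in $H$ while staying in the class $\mad \le 18/7$, provided $\rho^*_H(\{v_0,v_4\}) \ge 7 - 2k$ — and actually what we want is to modify $H$ so that $v_3$ (or whichever internal vertex) has fewer free-competing neighbors, or to add edges making $v_0$ effectively high-degree. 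The standard trick in these proofs: add edges to $H$ to "complete" the neighborhoods / create a gadget forcing colors, color that, then transfer. The main obstacle, and the step I'd budget the most care for, is getting the potential arithmetic to guarantee that the gadget we want to glue onto $v_0, v_4$ keeps $\mad \le 18/7$ — i.e. verifying $\rho^*_H(\{v_0,v_4\})$ is large enough (we have a surplus because $d(v_0) \le 6$ should buy extra potential: a $2$-vertex path hanging off a $\le 6$-vertex is "cheap"), and then checking that after adding the gadget and recoloring, the number of constraints on each of $v_1, v_2, v_3$ when we uncolor-and-recolor them is at most $\Delta = 7$, so that $8$ colors suffice. I'd carry it out as: (i) handle the degenerate $v_0=v_4$ case by direct deletion; (ii) assume $d(v_0)\le 6$, $v_0 \ne v_4$, set $H = G - \{v_1,v_2,v_3\}$; (iii) bound $\rho^*_H(\{v_0,v_4\})$ from below using \Cref{potlowerbound} and the degree hypothesis on $v_0$; (iv) add a suitable short path gadget between $v_0$ and $v_4$ (and possibly a pendant-ish gadget at $v_0$) using \Cref{potlemma}/\Cref{potlemma2} to stay in-class, invoke minimality on this strictly-smaller-or-equal graph — here one must be careful the new graph really is a smaller counterexample candidate, so the gadget must add at most as many vertices+edges as we removed, or we argue directly — then extend the coloring back to $v_0, v_4, v_3, v_2, v_1$ in a good order, using \Cref{counting lemma} to control $d^*$ values. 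The bookkeeping in (iv) — exact gadget size $k$ and the resulting constraint counts — is where essentially all the work lives.
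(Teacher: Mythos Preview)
Your argument for the case $v_0=v_4$ is fine and matches the paper. But for the case $v_0\ne v_4$ you have overlooked a one-line application of \Cref{counting observation}, and instead embarked on a potential-method detour that you never actually complete.

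The point you missed is this: you correctly observed that $d^*(v_2)\le 4\le\Delta$, but you then tried to feed this into \Cref{counting observation} with $w=v_0$, which requires knowing $d^*(v_1)\le\Delta$ first. Apply it instead with $w=v_1$ (or, as the paper does, with $w=v_3$) and $u_1=v_2$. Since $v_1$ is adjacent to $v_2$ and $d^*(v_2)\le\Delta$, \Cref{counting observation} gives $d^*(v_1)\ge\Delta+2$. But $d^*(v_1)=d(v_0)+2$, so $d(v_0)\ge\Delta$; by symmetry $d(v_4)\ge\Delta$, and we are done. No potential arithmetic, no gadgets, no minimality argument beyond what is already encapsulated in \Cref{counting lemma}.

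Your proposed route via \Cref{potlowerbound} and \Cref{potlemma} is not only unnecessary here, it is also left unfinished: you never specify which gadget to glue, you do not verify that the modified graph is strictly smaller (a path with $k$ internal vertices between $v_0$ and $v_4$ adds $k$ vertices and $k+1$ edges, while you removed $3$ vertices and $4$ edges, so you need $k\le 2$), and you do not explain how the gadget coloring would transfer back to control the constraints on $v_3$. The potential machinery is reserved in this paper for genuinely delicate configurations (see \Cref{two 3-paths lemma} onward); \Cref{3-path lemma} is not one of them.
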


\begin{proof}
Suppose that $G$ contains a 3-path $v_0v_1\dots v_4$ (see \Cref{3-path lemma figure}). 

If $v_0=v_4$, then we color $H=G-\{v_1,v_2,v_3\}$ by minimality of $G$ and extend the coloring to $G$ by coloring greedily $v_1$ and $v_3$ who has two available colors each and finish with $v_2$ who only sees three colors.

Now, suppose that $v_0\neq v_4$, since $d^*(v_2)=4\leq \Delta$, we have $d^*(v_3)\geq \Delta+2$ due to \Cref{counting observation}. Moreover, $d^*(v_3)=d(v_4)+2$, so $d(v_4)\geq \Delta$. The same holds for $v_0$ by symmetry.  
\end{proof}
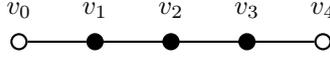
\begin{figure}[H]
\begin{center}
\begin{tikzpicture}{thick}
\begin{scope}[every node/.style={circle,thick,draw,minimum size=1pt,inner sep=2}]
    \node[label={above:$v_4$}] (0) at (-1,0) {};

    \node[label={above:$v_0$}] (100) at (-5,0) {};

    \node[fill,label={above:$v_3$}] (2) at (-2,0) {};
    \node[fill,label={above:$v_2$}] (3) at (-3,0) {};
    \node[fill,label={above:$v_1$}] (4) at (-4,0) {};
\end{scope}

\begin{scope}[every edge/.style={draw=black,thick}]
    \path (4) edge (100);
    \path (4) edge (0);
\end{scope}

\end{tikzpicture}
\end{center}
\caption{A 3-path.}
\label{3-path lemma figure}
\end{figure}

\begin{lemma} \label{2-path lemma}
At least one of the endvertices of a $2$-path has degree $\Delta$ or both of them have degree $\Delta-1$. The endvertices of a $2$-path are also distinct.
\end{lemma}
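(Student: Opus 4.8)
The plan is to argue by contradiction from the minimal counterexample $G$, using the standard deletion--recolouring--extension scheme; no potential argument is needed here, since this is one of the ``classic'' reducible configurations. Write the $2$-path as $v_0v_1v_2v_3$ with $d(v_1)=d(v_2)=2$, so that $N(v_1)=\{v_0,v_2\}$ and $N(v_2)=\{v_1,v_3\}$; in particular $v_2\notin N(v_0)$, $v_1\notin N(v_3)$, and the only vertices of $G$ adjacent to $v_1$ or $v_2$ lie in $\{v_0,v_1,v_2,v_3\}$. I would first prove distinctness: if $v_0=v_3$ then $v_0v_1v_2$ is a triangle with $d(v_1)=d(v_2)=2$, and a short count gives $N^*(v_1)=(\{v_0\}\cup N(v_0))\setminus\{v_1\}$ and likewise $N^*(v_2)=(\{v_0\}\cup N(v_0))\setminus\{v_2\}$, so $d^*(v_1)=d^*(v_2)=d(v_0)\le\Delta$. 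Since $v_1$ is adjacent to $v_2$ and $d^*(v_2)\le\Delta$, \Cref{counting observation} forces $d^*(v_1)\ge\Delta+2$, contradicting $d^*(v_1)=d(v_0)\le\Delta$. Hence $v_0\ne v_3$.

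For the degree claim, suppose neither endvertex has degree $\Delta$ and not both have degree $\Delta-1$; then, swapping $v_0$ and $v_3$ if needed, I may assume $d(v_0)\le\Delta-2=5$ and $d(v_3)\le\Delta-1=6$. Set $H=G-\{v_1,v_2\}$. Deleting $v_1,v_2$ only decreases the degrees of $v_0$ and $v_3$, and since $d_G(v_0),d_G(v_3)<\Delta$ every $\Delta$-vertex of $G$ is still a $\Delta$-vertex of $H$, so $\Delta(H)=7$; also $\mad(H)\le\mad(G)\le\frac{18}{7}$. By minimality of $G$, the graph $H$ has a $2$-distance $(\Delta+1)$-coloring $\phi$, which I extend to $G$ by colouring $v_2$ first and then $v_1$. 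At the moment $v_2$ is coloured, its coloured $2$-distance neighbours lie in $\{v_0,v_3\}\cup(N(v_3)\setminus\{v_2\})$ (the vertex $v_1$ being still uncoloured), so at most $d(v_3)+1\le 7$ colours are forbidden and one of the $8$ colours is free; once $v_2$ is coloured, all of $N^*(v_1)=\{v_0,v_2,v_3\}\cup(N(v_0)\setminus\{v_1\})$ is coloured, forbidding at most $d(v_0)+2\le 7$ colours, so a colour is again free. No new $2$-distance conflict among vertices of $H$ is created (paths through $v_1$ or $v_2$ have length at least $3$), so we obtain a $2$-distance $(\Delta+1)$-coloring of $G$, a contradiction.

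The only point requiring care is the order of extension: colouring $v_1$ before $v_2$ could leave $v_2$ facing $d(v_3)+2=8$ forbidden colours when $d(v_3)=6$, so one must colour the $2$-vertex adjacent to the endpoint of larger degree first; the reduction $d(v_0)\le 5$ extracted from the hypothesis is exactly what gives the single colour of slack at each of the two steps. Everything else is routine bookkeeping with $2$-distance neighbourhoods, using repeatedly that $v_1,v_2$ have degree $2$ and that $v_0\ne v_3$.
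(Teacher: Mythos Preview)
Your proof is correct. Interestingly, you and the paper use the same two tools---direct deletion--extension and \Cref{counting observation}---but swap which tool handles which half of the lemma: the paper proves distinctness by deleting $\{v_1,v_2\}$ and greedily extending, and proves the degree claim via \Cref{counting observation} (from $d(v_0)\le\Delta-2$ one gets $d^*(v_1)\le\Delta$, hence $d^*(v_2)\ge\Delta+2$, forcing $d(v_3)\ge\Delta$), whereas you do the reverse. The two routes are interchangeable, and your extra care in checking that $\Delta(H)=7$ before invoking minimality is a nice touch that the paper leaves implicit.
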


\begin{proof}
Consider a 2-path $v_0v_1v_2v_3$ (see \Cref{2-path lemma figure}) where $d(v_0)\leq d(v_3)$. 

If $v_0=v_3$, then we color $H=G-\{v_1,v_2\}$ by minimality of $G$ and extend the coloring to $G$ by coloring greedily $v_1$ and $v_2$ who has two available colors each.

Now, suppose that $v_0\neq v_3$. Suppose by contradiction that $d(v_3)\leq \Delta-1$ and $d(v_0)\leq \Delta-2$. Since $d(v_0)\leq \Delta-2$, $d^*(v_1)=d(v_0)+2\leq \Delta$. So, by \Cref{counting observation}, $d^*(v_2)=d(v_3)+2\geq \Delta+2$ meaning that $d(v_3)\geq \Delta$, which is a contradiction.

\end{proof}

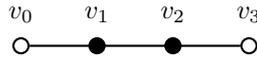
\begin{figure}[H]
\begin{center}
\begin{tikzpicture}{thick}
\begin{scope}[every node/.style={circle,thick,draw,minimum size=1pt,inner sep=2}]
    \node[label={above:$v_3$}] (0) at (-1,0) {};

    \node[label={above:$v_0$}] (100) at (-4,0) {};

    \node[fill,label={above:$v_2$}] (2) at (-2,0) {};
    \node[fill,label={above:$v_1$}] (3) at (-3,0) {};
\end{scope}

\begin{scope}[every edge/.style={draw=black,thick}]
    \path (0) edge (100);
\end{scope}
\end{tikzpicture}
\end{center}
\caption{A 2-path.}
\label{2-path lemma figure}
\end{figure}

\begin{lemma} \label{2-path lemma1}
Let $uvwx$ be a $2$-path. If $d(u)=7$ and $d(x)\leq 6$, then $u$ cannot be adjacent to $x$.
\end{lemma}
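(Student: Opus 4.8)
The plan is a standard reducibility argument by contradiction. Assume $u$ is adjacent to $x$, so that the $2$-path $uvwx$ together with the chord $ux$ forms a $4$-cycle whose two internal vertices $v$ and $w$ have degree exactly $2$. I would delete $v$ and $w$, invoke the minimality of $G$ to get a $2$-distance $(\Delta+1)$-coloring $\phi$ of $H=G-\{v,w\}$ (this is legitimate since $H$ is a proper subgraph of $G$, so $\rho^*_H\ge\rho^*_G\ge 0$ by \Cref{potsubgraph}), and then extend $\phi$ first to $v$ and then to $w$.

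The only point where the hypotheses enter is the following count. Since $d(u)=7$, write $N_G(u)=\{v,x,u_1,\dots,u_5\}$. As $w$ is a $2$-vertex, its neighbors are exactly $v$ and $x$; combined with $x\in N_G(u)$ (the chord), this gives $N^*_G(v)=\{u,w\}\cup\big(N_G(u)\setminus\{v\}\big)=\{u,w,x,u_1,\dots,u_5\}$, hence $d^*_G(v)=8$. Crucially, without the chord this set would have $9$ elements, which is exactly why the statement needs $u\sim x$. Symmetrically, $v$ is a $2$-vertex so $v\notin N_G(x)$, and $u\in N_G(x)$, so $N^*_G(w)=\{v,x\}\cup\big(N_G(x)\setminus\{w\}\big)$ has size $d(x)+1\le 7$ by the hypothesis $d(x)\le 6$.

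Now extend $\phi$. When coloring $v$: of the $8$ vertices of $N^*_G(v)$, only $w$ is still uncolored, so at most $7$ colors are forbidden and at least one of the $\Delta+1=8$ colors is available for $v$. When coloring $w$: every vertex of $N^*_G(w)$ is now colored ($v$ has just been colored, and the remaining vertices of $N^*_G(w)$ lie in $H$), and there are at most $d(x)+1\le 7$ of them, so again a color remains for $w$. This produces a $2$-distance $(\Delta+1)$-coloring of $G$, contradicting the choice of $G$; therefore $u$ is not adjacent to $x$.

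I do not anticipate a genuine obstacle: this is a short counting-plus-greedy extension in the same spirit as the earlier path lemmas, and it uses the potential method only through \Cref{potsubgraph}. The one thing to be careful about is the bookkeeping of coincidences, so that the counts $d^*_G(v)=8$ and $d^*_G(w)\le 7$ are exact rather than mere upper bounds: that $u,v,w,x$ are four distinct vertices (endvertices of a $2$-path are distinct by \Cref{2-path lemma}, and in any case $d(u)=7\ne d(x)$), that $u\not\sim w$ and $v\not\sim x$ because $v$ and $w$ are $2$-vertices, and that $u$ and $x$ do not lie among the ``far'' neighbors $N_G(u)\setminus\{v,x\}$ or $N_G(x)\setminus\{w,u\}$. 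Once these are checked, the two one-color budgets close and the contradiction follows.
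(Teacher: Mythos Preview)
Your proof is correct and follows exactly the same approach as the paper: delete $v$ and $w$, color $H=G-\{v,w\}$ by minimality, then extend greedily to $v$ and finally to $w$. The paper's own proof is two sentences and omits the explicit counts $d^*_G(v)=8$ and $d^*_G(w)\le d(x)+1\le 7$ that you spell out, but the argument is identical.
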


\begin{proof}
Suppose by contradiction that $u$ is adjacent to $x$. Let $H=G-\{v,w\}$. By minimality of $G$, we color $H$, then we finish by coloring $v$ then $w$ in this order.
\end{proof}

\begin{lemma} \label{tree lemma}
Graph $G$ has no cycles consisting of $3$-paths.
\end{lemma}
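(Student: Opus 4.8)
The plan is to turn a cycle consisting of $3$-paths into a proper $2$-distance coloring of a strictly smaller graph that extends to $G$, contradicting minimality. Suppose $G$ contains such a cycle $C$. By \Cref{4-path lemma} each arc of $C$ between two consecutive vertices of degree $\ge 3$ is genuinely a $3$-path, and by \Cref{3-path lemma} the endvertices of these $3$-paths, say $w_1,\dots,w_t$ in cyclic order, are pairwise distinct and have degree $\Delta=7$; since a single $3$-path has distinct ends it cannot close up, so $t\ge 2$. Write the $3$-path from $w_i$ to $w_{i+1}$ (indices mod $t$) as $w_i\,a_i\,b_i\,c_i\,w_{i+1}$, put $I=\{a_i,b_i,c_i: 1\le i\le t\}$ and $H=G-I$. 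By~\eqref{potsubgraph}, $\rho^*_H\ge\rho^*_G\ge 0$, and $H$ has strictly fewer vertices and edges than $G$, so by minimality $H$ has a $2$-distance $(\Delta+1)$-coloring $\phi$. Each vertex of $I$ has at most one neighbour in $V(H)$ (only the $w_i$ touch $I$), so no two vertices of $V(H)$ lie at distance $\le 2$ in $G$ without already doing so in $H$; hence $\phi$ is still a valid $2$-distance coloring of $G[V(H)]$, and it only remains to colour the vertices of $I$.

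Next I would record the $2$-distance neighbourhoods inside $G$. We have $N^*_G(a_i)=\{w_i,b_i,c_i\}\cup(N_G(w_i)\setminus\{a_i\})$; since $d(w_i)=7$ and $N_G(w_i)\cap I=\{a_i,c_{i-1}\}$, it follows that $N^*_G(a_i)\cap V(H)=\{w_i\}\cup(N_G(w_i)\setminus\{a_i,c_{i-1}\})$ has size at most $6$, so $a_i$ retains a list $L(a_i)$ of at least $2$ colours, and symmetrically so does $c_i$; moreover $N^*_G(b_i)=\{a_i,c_i,w_i,w_{i+1}\}$, of which only $w_i,w_{i+1}$ are coloured, leaving at least $6$ colours for $b_i$. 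The only distance-$\le 2$ pairs internal to $I$ are $a_ib_i$, $b_ic_i$ (edges of $G$), $a_i$--$c_i$ (common neighbour $b_i$) and $c_i$--$a_{i+1}$ (common neighbour $w_{i+1}$): distinctness of the $w_j$ rules out every other pair among the $a$'s, $b$'s and $c$'s, and in particular there are no chords. Hence the conflict graph restricted to $\{a_i,c_i: 1\le i\le t\}$ is exactly the even cycle $Q=a_1\,c_1\,a_2\,c_2\cdots a_t\,c_t\,a_1$ of length $2t\ge 4$, and each $b_i$ hangs onto the edge $a_ic_i$.

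To finish, colour $Q$ first: an even cycle in which every vertex has a list of size at least $2$ is list-colourable (even cycles are $2$-choosable), so we may choose $\phi(a_i)\in L(a_i)$ and $\phi(c_i)\in L(c_i)$ consistently along $Q$. Then colour each $b_i$: it must avoid only the at most $4$ colours of $w_i,w_{i+1},a_i,c_i$, at least $6$ were available, and distinct $b_i$'s are mutually at distance $>2$, so this succeeds. The result is a $2$-distance $(\Delta+1)$-coloring of $G$, contradicting the choice of $G$.

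The steps that need care are: (i) checking that deleting $I$ creates no new short distances among surviving vertices, so that $\phi$ is reusable verbatim — this hinges on every internal vertex of $C$ having at most one neighbour outside $I$; (ii) the list-size bounds for the $a_i$ and $c_i$, which use \Cref{3-path lemma} (each $w_i$ has degree exactly $7$) together with the fact that $C$ passes through each branch vertex only once, so that possible chords among the $w_j$ do not enlarge $N_G(w_i)\cap I$ beyond $\{a_i,c_{i-1}\}$; and (iii) pinning down the conflict graph on the $a_i,c_i$ as precisely a chordless even cycle, which is what lets $2$-choosability close the argument. The decisive structural fact — that this cycle has even length $2t$ with $t\ge 2$ — is automatic.
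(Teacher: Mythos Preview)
Your proof is correct and follows essentially the same approach as the paper's: delete the internal vertices of the $3$-paths, colour the smaller graph by minimality, then colour the $a_i,c_i$ using $2$-choosability of the even cycle they form in the conflict graph, and finish with the $b_i$. One minor imprecision: pairwise distinctness of the $w_i$ does not follow from \Cref{3-path lemma} (which only gives distinctness of the two ends of a single $3$-path) but rather from the fact that a cycle is a simple closed walk, so all its vertices are distinct.
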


\begin{proof}
Suppose that $G$ contains a cycle consisting of $k$ 3-paths (see \Cref{tree lemma figure}). We remove all vertices $v_{4i+1}$, $v_{4i+2}$, $v_{4i+3}$ for $0\leq i\leq k-1$. Consider a coloring of the resulting graph. We color greedily $v_1, v_3, v_5, \dots, v_{4k-1}$. This is possible since each of them has at least two choices of colors (as $d(v_0)=d(v_4)=\dots=d(v_{4(k-1)})=\Delta$ due to \Cref{3-path lemma}) and by 2-choosability of even cycles. Finally, it is easy to color greedily $v_2,v_6,\dots,v_{4k-2}$ since they each have at most four forbidden colors.
\end{proof}
\begin{figure}[H]
\begin{center}
\begin{tikzpicture}[scale=0.7,rotate=90]
\begin{scope}[every node/.style={circle,thick,draw,minimum size=1pt,inner sep=2}]
    \node[label={above:$v_4$}] (0) at (-1,-1) {};

    \node[label={below:$v_0$}] (100) at (-5,-1) {};

    \node[fill,label={left:$v_3$},label={right:$2$}] (1) at (-2,0) {};
    \node[fill,label={left:$v_2$},label={right:$\Delta-3$}] (2) at (-3,0) {};
    \node[fill,label={left:$v_1$},label={right:$2$}] (3) at (-4,0) {};

    \node[fill,label={below:$2$}] (4) at (-1,-2) {};
    \node[draw=none] (5) at (-1,-3) {};
    \node[fill] (6) at (-1,-4) {};

    \node[label={above:$v_{4j}$}] (200) at (-1,-5) {};

    \node[fill] (7) at (-2,-6) {};
    \node[fill] (8) at (-3,-6) {};
    \node[fill] (9) at (-4,-6) {};

    \node[label={below:$v_{4(j+1)}$}] (300) at (-5,-5) {};

    \node[fill,label={below:$v_{4k-1}$},label={above:$2$}] (14) at (-5,-2) {};
    \node[draw=none] (15) at (-5,-3) {};
    \node[fill] (16) at (-5,-4) {};
\end{scope}

\begin{scope}[every edge/.style={draw=black,thick}]
    \path (0) edge (1);
    \path (1) edge (3);
    \path (3) edge (100);
        \path (0) edge (4);
      \path[dashed] (4) edge (5);
      \path[dashed] (5) edge (6);
      \path (6) edge (200);
          \path (200) edge (7);
      \path (7) edge (9);
      \path (9) edge (300);
          \path (100) edge (14);
      \path[dashed] (14) edge (15);
      \path[dashed] (15) edge (16);
      \path (16) edge (300);
\end{scope}
\end{tikzpicture}
\vspace{-0.5cm}
\caption{A cycle consisting of consecutive 3-paths.}
\label{tree lemma figure}
\end{center}
\end{figure}
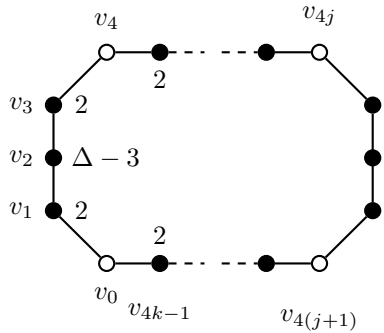

\begin{lemma} \label{small vertex lemma}
Let $v\in V$ such that $3\leq d(v)\leq \lfloor\frac{\Delta+1}{2}\rfloor$. Then $v$ cannot be a $(2,1^+,1^+,\dots,1^+)$-vertex.
\end{lemma}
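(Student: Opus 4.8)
The plan is to follow the template of the proofs of \Cref{4-path lemma,3-path lemma,2-path lemma}, i.e.\ to derive a contradiction directly from the counting estimate of \Cref{counting observation}; no deletion or potential argument should be needed. Suppose, for contradiction, that some $v$ with $3\le d(v)\le\lfloor\frac{\Delta+1}{2}\rfloor=4$ is a $(2,1^+,\dots,1^+)$-vertex. The first point is that \emph{every neighbour of $v$ is a $2$-vertex}: every edge at $v$ is the first edge of either the incident $2$-path or one of the incident $1^+$-paths, and by definition the internal vertices of such a path --- in particular the one adjacent to $v$ --- have degree $2$. Hence $N^*(v)$ consists of the $d(v)$ neighbours of $v$ together with, for each such neighbour $u$, the single neighbour of $u$ other than $v$, so
\[
d^*(v)\ \le\ 2\,d(v)\ \le\ 2\left\lfloor\tfrac{\Delta+1}{2}\right\rfloor\ \le\ \Delta+1 .
\]

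Next I would use the fact that one of the incident paths is a genuine $2$-path, not merely a $1^+$-path. Write that $2$-path as $vabw$, so that the internal vertices $a$ and $b$ both have degree $2$. Since $b$'s only neighbour besides $a$ is $w$, we get $N^*(a)\subseteq\{v,b,w\}\cup\bigl(N(v)\setminus\{a\}\bigr)$, hence $d^*(a)\le d(v)+2\le\lfloor\tfrac{\Delta+1}{2}\rfloor+2\le\Delta$. Thus $v$ has a neighbour $a$ with $d^*(a)\le\Delta$, and applying \Cref{counting observation} with $v$ in the role of the central vertex, $k=1$, and $u_1:=a$ yields $d^*(v)\ge\Delta+k+1=\Delta+2$, contradicting the bound $d^*(v)\le\Delta+1$ obtained above. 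Therefore no such $v$ can be a $(2,1^+,\dots,1^+)$-vertex.

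I do not anticipate a genuinely hard step; the argument is just the interaction of the two crude degree counts with \Cref{counting observation}. The points that need a little care are: (i) possible coincidences among the far endpoints of the incident paths (including a coincidence with $w$, or the $2$-path closing into a short cycle --- anyway excluded by \Cref{2-path lemma}) do not affect the inequalities, since collapsing vertices only shrinks the sets $N^*(v)$ and $N^*(a)$ being bounded; (ii) the hypothesis $d(v)\le\lfloor\frac{\Delta+1}{2}\rfloor$ is precisely what makes $2d(v)\le\Delta+1$, and having a full $2$-path rather than only $1^+$-paths is precisely what keeps $d^*(a)$ down to $d(v)+2$ rather than something of order $\Delta$ --- which is why both the degree bound on $v$ and the ``$2$'' appear in the statement.
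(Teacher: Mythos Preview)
Your proof is correct and follows essentially the same route as the paper: bound $d^*(a)\le d(v)+2\le\Delta$ for the neighbour $a$ on the $2$-path, invoke \Cref{counting observation} to force $d^*(v)\ge\Delta+2$, and contradict this with $d^*(v)\le 2d(v)\le\Delta+1$. The paper phrases the last step as $d(v)\ge\lceil\frac{\Delta+2}{2}\rceil$ rather than $d^*(v)\le\Delta+1$, but this is the same inequality rearranged.
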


\begin{proof}
Suppose that $G$ contains a vertex $v$ with $3\le d(v) \le \lfloor\frac{\Delta+1}{2}\rfloor$ that is a $(2,1^+,1^+,\dots,1^+)$-vertex. Let $w$ be a neighbor of $v$ that belongs to a 2-path. See \Cref{small vertex lemma figure}. We have $d^*(w)=d(v)+2$ and $d^*(v)=2d(v)$. Moreover, as $d(v)\leq \lfloor\frac{\Delta+1}{2}\rfloor$, it follows that $d^*(w)\leq \Delta$ since $\Delta > 3$. Thus, $d^*(v)\geq \Delta+2$ by \Cref{counting observation}. Since $d(v)$ is an integer and $2d(v)\geq \Delta+2$, $d(v)\geq \lceil\frac{\Delta+2}{2}\rceil$ which contradicts $d(v)\leq \lfloor\frac{\Delta+1}{2}\rfloor$.
\end{proof}
\begin{figure}[H]
\begin{center}
\begin{tikzpicture}{thick}
\begin{scope}[every node/.style={circle,thick,draw,minimum size=1pt,inner sep=2}]
    \node[fill,label={above:$v$}] (0) at (-2,0) {};
    \node[fill] (10) at (-1,1) {};
    \node (11) at (0,1) {};
    \node[fill] (20) at (-1,0) {};
    \node (21) at (0,0) {};
    \node[fill] (30) at (-1,-1) {};
    \node (31) at (0,-1) {};
    \node (100) at (-5,0) {};

    \node[fill,label={above:$w$}] (3) at (-3,0) {};
    \node[fill] (4) at (-4,0) {};
\end{scope}

\begin{scope}[every edge/.style={draw=black,thick}]
    \path (0) edge (100);
    \path (0) edge (10);
    \path (0) edge (20);
    \path (0) edge (30);
      \path (10) edge (11);
      \path (20) edge (21);
      \path (30) edge (31);
\end{scope}
\end{tikzpicture}
\caption{A $(2,1^+,\ldots,1^+)$-vertex $v$ with $3\le d(v) \le \lfloor \frac{\Delta+1}{2} \rfloor$. }
\label{small vertex lemma figure}
\end{center}
\end{figure}
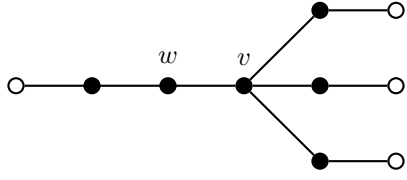

\begin{observation} \label{Hall} 
Let $u_1,u_2,\dots,u_k$ be $k$ vertices that are pairwise at distance at most two and let $L_i$ be the list of available colors of $u_i$ for $1\leq i\leq k$. By Hall's Theorem, if for all $1\leq l \leq k$, for all $i_1,i_2,\dots,i_l$, $|\cup_{j=1}^l L_{i_j}|\geq l$, then $u_1,u_2,\dots,u_k$ are colorable with the lists $L_1,L_2\dots,L_k$.  
\end{observation}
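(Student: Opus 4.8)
The plan is to recognize \Cref{Hall} as a direct instance of Hall's marriage theorem applied to a system of distinct representatives. The first step is the translation: since $u_1,\dots,u_k$ are pairwise at distance at most two, any valid (2-distance) extension must give them $k$ pairwise distinct colors, and conversely, because each $L_i$ is by definition the set of colors not yet forbidden for $u_i$ (i.e.\ avoiding all already-colored vertices within distance two of $u_i$), it suffices to find $\phi(u_i)\in L_i$ for every $i$ with all the $\phi(u_i)$ distinct. So the claim reduces to: the set family $(L_1,\dots,L_k)$ admits a system of distinct representatives.

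Next I would build the bipartite \emph{availability graph} $B$ with parts $U=\{u_1,\dots,u_k\}$ and $C=\bigcup_{i=1}^{k}L_i$, joining $u_i$ to each color $c\in L_i$. A system of distinct representatives is exactly a matching of $B$ that saturates $U$. For any nonempty $S=\{u_{i_1},\dots,u_{i_\ell}\}\subseteq U$, the neighborhood $N_B(S)$ is precisely $\bigcup_{j=1}^{\ell}L_{i_j}$, so the hypothesis ``$|\bigcup_{j=1}^{\ell}L_{i_j}|\ge \ell$ for all $\ell$ and all index choices'' is literally Hall's condition $|N_B(S)|\ge |S|$ for every $S\subseteq U$. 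Hall's theorem then provides a matching of $B$ saturating $U$, i.e.\ an injection $i\mapsto \phi(u_i)\in L_i$, and coloring each $u_i$ with $\phi(u_i)$ is the desired coloring.

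There is essentially no obstacle here: the statement already names Hall's Theorem, and the only point requiring a word of care is the equivalence between ``colorable'' in the paper's sense and ``admits an SDR'', which is immediate once one notes that the $L_i$ are the lists of currently available colors and that pairwise distance at most two forces the chosen colors to be distinct. I would not expect to need the defect version of Hall's theorem, since the hypothesis gives exactly the bound $|\bigcup L_{i_j}|\ge \ell$; the plain marriage theorem suffices.
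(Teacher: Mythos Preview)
Your proposal is correct and matches the paper's intent: the paper gives no proof for this observation at all, simply invoking Hall's Theorem in the statement itself, and your argument spells out precisely the standard reduction to a system of distinct representatives via the bipartite availability graph. There is nothing to add or fix.
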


\begin{lemma} \label{weird cases lemma}
Let $u$ be a $7$-vertex that is incident to six $2$-paths where the other endvertices are $5^-$-vertices. Then, $u$ cannot be incident to a $3$-path, a $(2,2,0)$-vertex or another $2$-path where the other endvertex is a $6^-$-vertex. 
\end{lemma}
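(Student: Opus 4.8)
The plan is to assume this configuration occurs in the minimal counterexample $G$ and to derive a $2$-distance $8$-colouring of $G$, a contradiction. Write $u$ for the $7$-vertex and let the six prescribed $2$-paths be $ua_ib_ix_i$ with $d(x_i)\le 5$, $1\le i\le 6$; the seventh edge at $u$ carries, in the three sub-cases to be ruled out, either (a) a $3$-path $up_1p_2p_3p_4$ with $d(p_4)=7$ by \Cref{3-path lemma}, (b) an edge $uy$ to a $(2,2,0)$-vertex $y$ whose two $2$-paths $yc_1d_1z_1$, $yc_2d_2z_2$ satisfy $d(z_1)=d(z_2)=7$ by \Cref{2-path lemma}, or (c) a further $2$-path $ua_7b_7x_7$ with $d(x_7)\le 6$. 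In each case I would let $H$ be $G$ with $u$, all internal $2$-vertices of these paths, and (in case (b)) $y$ deleted: $H$ is a proper subgraph with $\mad(H)\le\frac{18}{7}$, so — adding a disjoint $K_{1,7}$ first if $\Delta(H)<7$ — minimality of $G$ yields a $2$-distance $8$-colouring $\phi$ of $H$, under which all endvertices $x_i,p_4,z_1,z_2$ and their surviving neighbours are coloured.

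The organising observation is that, when $u$ is reinserted, its degree-$2$ neighbours (the $a_i$; plus $p_1$ in case (a); plus $a_7$ in case (c)) together with $u$ — and with $y$ in case (b) — form eight vertices that are pairwise at distance at most two, hence must use all eight colours. The remaining $2$-vertices (the $b_i$; resp.\ $p_2,p_3$ in case (a); $c_1,d_1,c_2,d_2$ in case (b)) are slack: coloured in the order below, each has at most $d(x_i)+1\le 6$ already-coloured vertices in its $2$-neighbourhood — at most $7$ for $b_7$ and for the vertices adjacent to the $7$-vertices $p_4,z_1,z_2$ — so it retains at least one, and usually at least two, available colours. This is exactly where the hypotheses ``$5^-$'', ``$6^-$'' and ``$d(y)=3$'' are used.

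I would extend $\phi$ thus. First colour $u$ (isolated in $H$, hence unconstrained), making the one delicate choice: if a colour $c$ occurs as $\phi(x_i)$ for many indices $i$, set $\phi(u)=c$, so that for each such $i$ only two of the colours $\phi(u),\phi(b_i),\phi(x_i)$ — rather than three — are forbidden at $a_i$. Then colour the $2$-vertices sitting next to the precoloured $7$-vertices ($p_3$ in case (a); $d_1,d_2$ then $c_1,c_2$ in case (b)), then the $b_i$ for $1\le i\le 6$, then any remaining non-final $2$-vertex ($b_7$ in case (c), $p_2$ in case (a)); each such step has at least one free colour, the $b_i$ with $i\le 6$ at least two. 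Finally colour the last batch — the $a_i$, together with $p_1$ in case (a) and with $y$ in case (b): these must receive pairwise distinct colours from the $7$-element set $\{1,\dots,8\}\setminus\{\phi(u)\}$, and each now carries a list of size at least five there (at least three for $y$). The only way Hall's condition for these lists can fail is if too many of the forbidden pairs $\{\phi(b_i),\phi(x_i)\}$ coincide or share a common colour, which I forestall by using the residual freedom in the colours of the $b_i$ to make two of those pairs differ; then \Cref{Hall} supplies the colouring, and all of $G$ is coloured — a contradiction.

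The hard part is exactly this last step: one must verify, over all precolourings $\phi$ of $H$ — in particular the unpleasant ones in which the endvertices and their neighbourhoods share a lot of colours — that recolouring $u$ (and $y$) together with the one or two free colours left at each $b_i$ genuinely restores Hall's condition for the final batch, so the slack built up in the second paragraph has to be budgeted with care. For any residual sub-case in which plain deletion is still not enough, the fallback is to strengthen $H$ before invoking minimality by adding a few short paths among the surviving endvertices — each addition justified by \Cref{potlemma} once the relevant value of $\rho^*$ has been bounded below via \eqref{potlowerbound} — so as to force those endvertices to be coloured pairwise distinctly, after which the extension is routine.
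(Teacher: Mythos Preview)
Your plan creates a difficulty that the paper sidesteps entirely. You colour $u$ first, then the ``outer'' $2$-vertices $b_i$, and leave the ``inner'' $2$-vertices $a_i$ (together with $p_1$, $y$, or $a_7$) for a final Hall step. That Hall step is genuinely delicate: seven pairwise-adjacent vertices must receive seven colours from $\{1,\dots,8\}\setminus\{\phi(u)\}$, each from a list of size $\ge 5$ (or $\ge 3$ for $y$), and you correctly identify that Hall can fail when the forbidden pairs $\{\phi(b_i),\phi(x_i)\}$ share a colour. Your fixes --- choosing $\phi(u)$ cleverly, recolouring some $b_i$, or falling back on the potential method --- are plausible but not carried out, and the potential-method fallback in particular is heavy machinery that this lemma does not need.

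The paper's proof avoids all of this by reversing the order: colour the inner ring \emph{first}. Concretely, take $H=G-(\{u\}\cup N^*_G(u))$, colour $H$ by minimality, and then greedily colour the seventh neighbour of $u$ (namely $p_1$, $y$, or $a_7$), then $a_1,\dots,a_6$, then $u$, then $b_1,\dots,b_6$, and finally the one or two leftover $2$-vertices ($p_2$; or $c_1,c_2$; or $b_7$). At each step the vertex being coloured sees at most seven already-coloured vertices in its $2$-neighbourhood, so the extension is purely greedy --- no Hall argument required. The single wrinkle is case~(c): when we reach $b_7$ it sees $a_7,u,x_7$ and up to five further neighbours of $x_7$, which is eight constraints. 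The paper handles this by choosing, back at the very first step, $\phi(a_7)$ equal to the colour of some already-coloured neighbour of $x_7$ (such a neighbour exists since $d(x_7)\ge 3$); then $b_7$ sees that colour twice and has at most seven distinct constraints.

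So the missing idea is simply the ordering: colouring $N_G(u)$ before $u$ and before the $b_i$ makes every step trivially feasible, and the only trick needed is the repeated-colour choice for $a_7$ in case~(c). Your approach could perhaps be pushed through, but it is substantially harder and the hard part is not done.
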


\begin{proof}
Suppose by contradiction that $u$ is incident to six $2$-paths where the other endvertices are $5^-$-vertices and that $u$ is also to a $3$-path, a $(2,2,0)$-vertex or another $2$-path where the other endvertex is a $6^-$-vertex.
 
First, observe that $u$ is distinct from the other endvertex of its incident $3$-path due to \Cref{3-path lemma} and from the endvertices of the $2$-paths incident to its $(2,2,0)$-neighbor due to \Cref{2-path lemma1}.

Consider $H=G-(\{u\}\cup N^*_G(u))$. By minimality of $G$, there exists a coloring of $H$ that we will extend to $G$ by coloring the vertices in the order indicated in \Cref{weird cases lemma figure} with the specification that in the case where $u$ is incident to another $2$-path with a $6^-$-endvertex, $u$'s $2$-neighbor $x$ on this path will be colored with the same color as a colored vertices at distance 3 from $x$. The indicated order verifies at each step that the considered vertex sees at most seven colors. Thus, we obtain a valid coloring of $G$ which is a contradiction.
\end{proof}

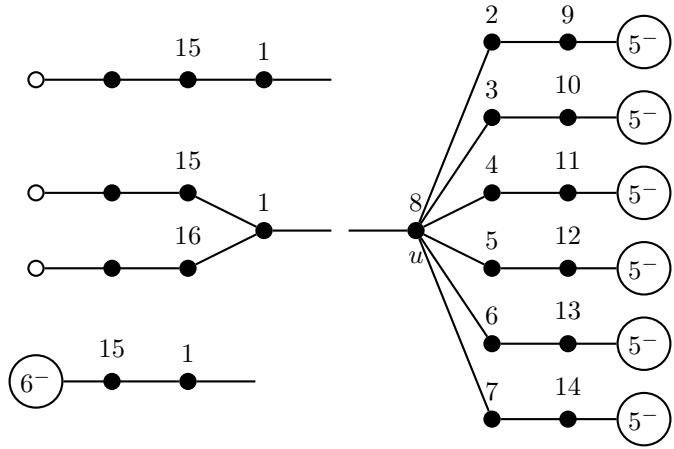
\begin{figure}[H]
\begin{center}
\begin{tikzpicture}{thick}
\begin{scope}[every node/.style={circle,thick,draw,minimum size=1pt,inner sep=2}]
    \node[fill,label={below:$u$},label={above:8}] (0) at (-2,0) {};
    \node[fill,label={above:2}] (10) at (-1,2.5) {};
    \node[fill,label={above:9}] (11) at (0,2.5) {};
    \node (12) at (1,2.5) {$5^-$};
    \node[fill,label={above:3}] (20) at (-1,1.5) {};
    \node[fill,label={above:10}] (21) at (0,1.5) {};
    \node (22) at (1,1.5) {$5^-$};
    \node[fill,label={above:4}] (30) at (-1,0.5) {};
    \node[fill,label={above:11}] (31) at (0,0.5) {};
    \node (32) at (1,0.5) {$5^-$};
    \node[fill,label={above:5}] (40) at (-1,-0.5) {};
    \node[fill,label={above:12}] (41) at (0,-0.5) {};
    \node (42) at (1,-0.5) {$5^-$};
    \node[fill,label={above:6}] (50) at (-1,-1.5) {};
    \node[fill,label={above:13}] (51) at (0,-1.5) {};
    \node (52) at (1,-1.5) {$5^-$};
    \node[fill,label={above:7}] (60) at (-1,-2.5) {};
    \node[fill,label={above:14}] (61) at (0,-2.5) {};
    \node (62) at (1,-2.5) {$5^-$};

    \node[draw=none] (1) at (-3,0) {};
    
    \node[draw=none] (2) at (-3,2) {};
    \node[fill,label={above:1}] (3) at (-4,2) {};
    \node[fill,label={above:15}] (4) at (-5,2) {};
    \node[fill] (5) at (-6,2) {};
    \node (6) at (-7,2) {};
    
    \node[fill,label={above:1}] (13) at (-4,0) {};
    \node[fill,label={above:15}] (14) at (-5,0.5) {};
    \node[fill] (15) at (-6,0.5) {};
    \node (16) at (-7,0.5) {};
    
    \node[fill,label={above:16}] (17) at (-5,-0.5) {};
    \node[fill] (18) at (-6,-0.5) {};
    \node (19) at (-7,-0.5) {};
    
    \node[draw=none] (23) at (-4,-2) {};
    \node[fill,label={above:1}] (24) at (-5,-2) {};
    \node[fill,label={above:15}] (25) at (-6,-2) {};
    \node (26) at (-7,-2) {$6^-$};
\end{scope}

\begin{scope}[every edge/.style={draw=black,thick}]
	\path (0) edge (1);
    \path (0) edge (10);
    \path (0) edge (20);
    \path (0) edge (30);
    \path (0) edge (40);
    \path (0) edge (50);
    \path (0) edge (60);
      \path (10) edge (12);
      \path (20) edge (22);
      \path (30) edge (32);
      \path (40) edge (42);
      \path (50) edge (52);
      \path (60) edge (62);
    
    \path (2) edge (3);
    \path (3) edge (6);
    
    \path (1) edge (13);
    \path (13) edge (14);
    \path (13) edge (17);
    \path (14) edge (16);
    \path (17) edge (19);
    
    \path (23) edge (24);
    \path (24) edge (26);
\end{scope}
\end{tikzpicture}
\caption{The order in which the coloring will be extended to $G$ is indicated above the vertices.}
\label{weird cases lemma figure}
\end{center}
\end{figure}

\begin{lemma} \label{weird cases lemma2}
A $6$-vertex cannot be incident to six $2$-paths where the other endvertices are $6$-vertices.
\end{lemma}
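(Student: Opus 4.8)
The plan is to show that the configuration is reducible. Suppose, for contradiction, that the minimal counterexample $G$ (so $\Delta=7$) contains a $6$-vertex $v$ incident to six $2$-paths, writing the $i$-th path as $vx_iy_iw_i$ with $x_i,y_i$ of degree $2$ and $w_i$ of degree $6$, for $1\le i\le 6$. First I would record the easy structural facts: by \Cref{2-path lemma} the endpoints $v$ and $w_i$ of each $2$-path are distinct; since $d(v)=6$ is already attained by the edges $vx_i$, the vertex $v$ is not adjacent to any $w_i$, and in fact $v$ is at distance $\ge 3$ from every $w_i$; and the twelve vertices $x_1,\dots,x_6,y_1,\dots,y_6$ are pairwise distinct, with no edges among $\{v,x_i,y_i,w_i\}$ other than the path edges except possibly coincidences $w_i=w_j$. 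I would postpone that one degeneracy to the end — it only shrinks the relevant $2$-distance neighbourhoods, so the same argument (applying the coincidence trick below to both $x_i$ and $x_j$) goes through with room to spare because $w:=w_i=w_j$ then keeps at most four neighbours in the reduced graph.

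For the generic case I would take $H=G-(\{v\}\cup N^*_G(v))=G-\{v,x_1,\dots,x_6,y_1,\dots,y_6\}$ and, by minimality of $G$, fix a $2$-distance coloring $\phi$ of $H$. Each $w_i$ lies in $H$ and keeps all its $G$-neighbours except $y_i$, so the six vertices $\{w_i\}\cup\bigl(N_G(w_i)\setminus\{y_i\}\bigr)$ get six pairwise distinct colors under $\phi$; call $S_i$ this $6$-element color set. The obstruction to a naive extension is that $d^*_G(y_i)=8=\Delta+1$: if $y_i$ is colored last it could a priori see all $8$ colors, namely $S_i\cup\{\phi(v),\phi(x_i)\}$. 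The whole point of the extension is therefore, exactly as in the proof of \Cref{weird cases lemma}, to force for each $i$ a color coincidence at distance $3$ so that $y_i$ really sees only $7$ colors.

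Concretely I would extend $\phi$ to $G$ as follows. (1) Since $v$ has no already-colored $2$-neighbour, choose $\phi(v)\in S_5\cap S_6$; this set is nonempty because $|S_5|+|S_6|-(\Delta+1)=6+6-8=4$, and this choice will take care of $y_5$ and $y_6$. (2) For $i=1,\dots,4$ in turn, color $x_i$ with $\phi(z_i)$ for some $z_i\in N_G(w_i)\setminus\{y_i\}$ whose color avoids $\{\phi(v),\phi(x_1),\dots,\phi(x_{i-1})\}$; this is possible since there are five candidates $z_i$ with five distinct colors, all different from $\phi(w_i)$, against at most $i\le 4$ forbidden colors, and the resulting color is legal for $x_i$ (its colored $2$-neighbours are among $v,w_i,x_1,\dots,x_{i-1}$); note $\phi(x_i)\in S_i$, which will handle $y_i$. (3) Color $x_5$ then $x_6$ greedily; when $x_6$ is colored it has at most $7$ colored $2$-neighbours ($v,w_6,x_1,\dots,x_5$), so a color remains. (4) Finally color $y_1,\dots,y_6$ greedily: the colors on the $2$-neighbourhood of $y_i$ form the set $S_i\cup\{\phi(v),\phi(x_i)\}$, which has size at most $7$ by step (1) (if $i\in\{5,6\}$, since then $\phi(v)\in S_i$) or step (2) (if $i\le 4$, since then $\phi(x_i)\in S_i$), so a color is available; and since the $y_i$ are pairwise at distance $\ge 3$, these choices do not interact. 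This yields a $2$-distance $(\Delta+1)$-coloring of $G$, the desired contradiction.

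The hard part is not the bookkeeping but the distance-$3$ coincidence device of steps (1)–(2) — the same trick that already powers \Cref{weird cases lemma} — which is what makes the extremely tight vertices $y_i$ (with $d^*=\Delta+1$) colorable at all; a secondary point to be careful about is splitting off and dispatching the degenerate situations $w_i=w_j$.
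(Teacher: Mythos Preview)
Your argument for the generic case (all $w_i$ distinct) is correct, but it takes a more elaborate route than the paper. The paper uses a \emph{single} distance-$3$ coincidence: after deleting $v$ and the twelve internal $2$-vertices and coloring the rest, it gives one $y_i$ and one $x_j$ (with $j\ne i$) the same color by pigeonhole ($|L(y_i)|\ge 2$ and $|L(x_j)|\ge 7$), then colors the remaining $y$'s, then $v$, then the remaining $x$'s, finishing with $x_i$. In that order only $x_i$ is tight, and the lone coincidence (visible to $x_i$ through both $y_i$ and $x_j$) resolves it. You instead color $v$ first and manufacture a separate coincidence for each of the six $y_i$'s (via $\phi(v)\in S_5\cap S_6$ and $\phi(x_i)\in S_i$ for $i\le 4$), leaving all the $y_i$'s tight at the end. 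Both work; the paper's ordering simply avoids the need for more than one saved color.

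One genuine loose end: your treatment of the degeneracy $w_i=w_j$ is too quick. If coincidences accumulate --- in the extreme, all $w_i$ equal to a single $w$ --- then $N_G(w_i)\setminus\{y_i\}$ consists entirely of deleted (hence uncolored) vertices, so in step~(2) there is no candidate $z_i$ at all, and ``room to spare'' is the opposite of what happens to your scheme. These cases are easily dispatched (e.g.\ set $\phi(v)=\phi(w)$, legal since they are at distance~$3$, and proceed greedily), but the sentence you wrote does not cover them. The paper's single-coincidence argument, by contrast, goes through unchanged in the degenerate cases since only one colored vertex near $y_1$ and one near $x_j$ are ever needed.
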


\begin{proof}
Consider $H$ the graph $G$ where we removed the $6$-vertex $u$ and all $2$-vertices on the $2$-paths incident to $u$. Consider the internal $2$-vertices $p_1$ and $p_2$ on a $2$-path incident to $u$. We color the $2$-vertex $p_2$ at distance 2 from $u$, which has at least two available colors, and a $2$-neighbor $x$ of $u$, which has seven available colors, with the same color by the pigeonhole principle. Now, we color all other $2$-vertices at distance 2 from $u$, then $u$. Finally, we color all vertices of $N_G(u)$ by finishing with $p_1$ which now sees eight colored vertices but two of them share the same color.
\end{proof}

\begin{lemma} \label{pot 3-path lemma}
Let $1\leq k\leq 3$ and $up_1\dots p_kv$ be a $k$-path in $G$ and let $P=\{p_1,\dots,p_k\}$. If $\rho^*_{G-P}(u)\leq \rho^*_{G-P}(v)$, then $\rho^*_{G-P}(v)\geq 1$.
\end{lemma}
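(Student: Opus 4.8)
The plan is to move to the graph $H=G-P$, bound the potential $\rho^*_H(\{u,v\})$ from below, and then combine that bound with submodularity of the potential and the hypothesis $\rho^*_H(u)\le\rho^*_H(v)$.

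First I would record a few immediate facts. Since $H$ is a subgraph of $G$ we have $\mad(H)\le\mad(G)\le\frac{18}{7}$, so $\rho_H(A)\ge 0$ and hence $\rho^*_H(A)\ge 0$ for every $A\subseteq V(H)$; moreover $\rho_H$ is integer-valued, so $\rho^*_H$ is integer-valued too. Because $P=\{p_1,\dots,p_k\}$ consists exactly of the $k$ internal $2$-vertices of the path $up_1\cdots p_kv$, the only vertices of $G$ lying outside $P$ that are adjacent to a vertex of $P$ are $u$ and $v$, the only edges of $G$ between $P$ and $\{u,v\}$ are $up_1$ and $p_kv$, and $\rho_G(P)=9k-7(k-1)=2k+7$.

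Next I would apply \Cref{potlowerbound} with $A=P$ and $S=\{u,v\}$ (note that $\{u,v\}$ is disjoint from $P$ and contains every vertex outside $P$ adjacent to $P$): since $|E(P,\{u,v\})|=2$, this gives
\[
\rho^*_{G-P}(\{u,v\})\ \ge\ 7\cdot 2-\rho_G(P)\ =\ 14-(2k+7)\ =\ 7-2k\ \ge\ 1,
\]
where the last inequality uses $k\le 3$ (this is exactly where the hypothesis $k\le 3$ enters; $7-2k\le 0$ for $k\ge 4$, consistent with \Cref{4-path lemma} ruling out $4^+$-paths). Then, since the endpoints of a path are distinct we have $u\ne v$, so \Cref{potadd} applied in $H$ to the singletons $\{u\}$ and $\{v\}$ gives
\[
\rho^*_H(u)+\rho^*_H(v)\ \ge\ \rho^*_H(\{u,v\})+\rho^*_H(\emptyset)\ \ge\ (7-2k)+0\ \ge\ 1 .
\]
Together with $\rho^*_H(u)\le\rho^*_H(v)$ this forces $2\rho^*_H(v)\ge 1$, and since $\rho^*_H(v)$ is an integer we conclude $\rho^*_H(v)\ge 1$.

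The argument is short, and the one step that is not purely mechanical is the last one: one cannot deduce $\rho^*_H(v)\ge 1$ merely from $\{v\}\subseteq\{u,v\}$, because \Cref{potsuperset} bounds $\rho^*_H(v)$ from above (not below) by $\rho^*_H(\{u,v\})$. It is submodularity (\Cref{potadd}) together with the ordering hypothesis $\rho^*_H(u)\le\rho^*_H(v)$ and the integrality of $\rho^*_H$ that convert the lower bound on $\rho^*_H(\{u,v\})$ into the desired lower bound on $\rho^*_H(v)$.
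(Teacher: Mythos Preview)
Your proof is correct and essentially the same as the paper's: both apply \Cref{potlowerbound} with $A=P$ and $S=\{u,v\}$ to get $\rho^*_{G-P}(uv)\ge 7-2k\ge 1$, then use submodularity (\Cref{potadd}) on the singletons $\{u\},\{v\}$ together with integrality. The paper phrases the second step by contradiction (assume $\rho^*_{G-P}(u)=\rho^*_{G-P}(v)=0$), while you give the direct version via $2\rho^*_H(v)\ge 1$, but the content is identical.
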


\begin{proof} 
Suppose by contradiction that $\rho^*_{G-P}(v)=\rho^*_{G-P}(u)=0$ (recall that $\rho^*_{G-P}(u)\geq 0$ since $\mad(G)\leq \frac{18}{7}$). Then, by \Cref{potadd}, $0=\rho^*_{G-P}(v)+\rho^*_{G-P}(u)\geq \rho^*_{G-P}(uv)$. 
However, by \Cref{potlowerbound}, $\rho^*_{G-P}(uv)\geq 7|E(P,\{u,v\})| - \rho_G(P) \geq 7\cdot 2 - (9\cdot 3 - 7\cdot 2) = 1$, which is a contradiction.
\end{proof}

\begin{lemma}\label{two 3-paths lemma}
Let $up_1p_2p_3v$ and $vp'_1p'_2p'_3w$ be two consecutive $3$-paths in $G$ and let $P=\{p_1,p_2,p_3\}$ and $P'=\{p'_1,p'_2,p'_3\}$. Then $\rho^*_{G-P}(u) = \rho^*_{G-P'}(w) = 0$.
\end{lemma}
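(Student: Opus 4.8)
I want to show that for two consecutive $3$-paths $up_1p_2p_3v$ and $vp_1'p_2'p_3'w$, both potentials $\rho^*_{G-P}(u)$ and $\rho^*_{G-P'}(w)$ vanish. Since $\mad(G) \le \frac{18}{7}$ gives $\rho^*_{G-P}(u) \ge 0$ and $\rho^*_{G-P'}(w) \ge 0$, it suffices to rule out the possibility that either one is strictly positive. The natural plan is a minimality argument: suppose, say, $\rho^*_{G-P}(u) \ge 1$. Then the "extra" potential budget around $u$ should let me reattach the path $P$ to a suitable subgraph without violating the $\mad$ bound, color that subgraph by minimality of $G$, and extend the coloring across $P$ — contradicting that $G$ is a counterexample.

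**Carrying it out.** First I would invoke \Cref{3-path lemma}: each endvertex of a $3$-path has degree $\Delta = 7$ and the two endvertices are distinct, so $u, v, w$ are $7$-vertices and (after checking, via \Cref{tree lemma} or a short direct argument) the configuration is genuinely a path and not a short cycle. Now assume $\rho^*_{G-P}(u) \ge 1$. The idea is to consider $H = G - P$ together with a short path reattached between $u$ and $v$: concretely, I would like to apply \Cref{potlemma} with $k = 3$, which needs $\rho^*_{G-P}(\{u,v\}) \ge 7 - 6 = 1$. By \Cref{potadd}, $\rho^*_{G-P}(\{u,v\}) \le \rho^*_{G-P}(u) + \rho^*_{G-P}(v)$ does not directly help, but the lower bound from \Cref{potlowerbound} applied with $A = P$ gives $\rho^*_{G-P}(\{u,v\}) \ge 7\cdot 2 - \rho_G(P) = 14 - 13 = 1$, so in fact $\rho^*_{G-P}(\{u,v\}) \ge 1$ always holds here. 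So \Cref{potlemma} already tells us $G = (G-P) + P$ stays within the $\mad$ class — which we knew. The real leverage must instead come from using the second path: I would look at $G - P$ and note that the path $vp_1'p_2'p_3'w$ is still present, so $v$ lies on a $3$-path in $G - P$, whence by \Cref{pot 3-path lemma} applied to this $3$-path in $G - P$, whichever of $\rho^*_{(G-P)-P'}(v)$, $\rho^*_{(G-P)-P'}(w)$ is smaller is $\ge 1$ unless they are equal... this is getting delicate, so let me restate the cleaner route.

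**The clean route.** Apply \Cref{pot 3-path lemma} to the $3$-path $up_1p_2p_3v$ directly: it says that if $\rho^*_{G-P}(u) \le \rho^*_{G-P}(v)$ then $\rho^*_{G-P}(v) \ge 1$; symmetrically if $\rho^*_{G-P}(v) \le \rho^*_{G-P}(u)$ then $\rho^*_{G-P}(u) \ge 1$. So in all cases $\max\{\rho^*_{G-P}(u), \rho^*_{G-P}(v)\} \ge 1$. The claim I actually want is that the minimum is $0$; so the content is to exclude $\rho^*_{G-P}(u) \ge 1$. Here I would use the \emph{other} path: since $v$ is an endvertex of the $3$-path $vp_1'p_2'p_3'w$, and this path survives in $G - P$, I can consider removing it too. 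I expect the argument to run: if $\rho^*_{G-P}(u) \ge 1$, build $H' = (G - P) + P''$ where $P''$ is a $0$-path (a single edge) from $u$ to $v$ — by \Cref{pot observation}/\Cref{potlemma} with $k=0$ this costs $7$ in potential but we can afford it because we will simultaneously delete $P'$ (which frees up potential via \Cref{potlowerbound}), keeping everything $\ge 0$; then color $H'$ by minimality (it is smaller: we replaced $3+3 = 6$ subdivision vertices by $1$ edge, net loss of vertices and edges), and extend across $P$ and $P'$ greedily since each $p_i, p_i'$ has $d^* = 4 \le \Delta$ so sees at most $4 < 8$ colors and the endpoints' constraints are controlled by the reattached structure.

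**Main obstacle.** The delicate point is making the potential bookkeeping rigorous: I need to verify that after deleting $P'$ and (possibly) adding a short connector, every vertex set still has nonnegative potential, which requires combining \Cref{potlowerbound} (to quantify how much deleting $P'$ gives back) with \Cref{pot observation} (to quantify how much a reattachment costs) and with the hypothesis $\rho^*_{G-P}(u) \ge 1$, and checking the arithmetic $14 - 13 = 1$ leaves exactly enough slack. I also need to ensure the reattached graph really is strictly smaller than $G$ (in vertices plus edges) so minimality applies, and that the greedy extension order across both $3$-paths never exceeds $\Delta+1 = 8$ available colors at any internal $2$-vertex — which follows because every such vertex has $2$-distance degree $4$, but I should double-check the endvertices $u, v, w$ themselves get colored consistently with whatever connector was added. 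I would present it as: assume WLOG $\rho^*_{G-P}(u) \ge 1$ (by the symmetric roles and \Cref{pot 3-path lemma}), derive a contradiction via this reattach-and-extend scheme, hence $\rho^*_{G-P}(u) = 0$, and symmetrically $\rho^*_{G-P'}(w) = 0$.
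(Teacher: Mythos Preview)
Your proposal has two genuine gaps that prevent it from going through.

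\textbf{Wrong connector, wrong potential.} You try to reattach something between $u$ and $v$, eventually settling on an edge after deleting $P'$. But an edge costs $7$ units of potential (\Cref{potlemma} with $k=0$), so you would need $\rho^*_{G-P-P'}(uv)\ge 7$, and nothing in your hypothesis $\rho^*_{G-P}(u)\ge 1$ gives that. The paper instead works in $H=G-(P\cup P')$ and adds a \emph{$3$-path} between $u$ and $w$ (the two outer endpoints, not $v$), which only costs $1$; the dichotomy is on $\rho^*_H(uw)$, not on $\rho^*_{G-P}(u)$. When $\rho^*_H(uw)=0$, one deduces $\rho^*_{G-P}(u)=0$ via the chain $\rho^*_H(uw)\ge \rho^*_H(u)\ge \rho^*_{G-P}(u)$ using \Cref{potsuperset} and \Cref{potsubgraph}. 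Your contradiction hypothesis never gets used in a way that buys enough potential.

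\textbf{The greedy extension fails.} You assert that each $p_i,p_i'$ has $d^*=4$, but that is only true for the middle vertices $p_2,p_2'$. The vertices $p_1,p_3,p_1',p_3'$ are each adjacent to a $\Delta$-vertex, so $d^*=\Delta+2=9$, and with $\Delta+1=8$ colors they cannot be handled greedily. The whole point of the paper's added $3$-path $up_1''p_2''p_3''w$ is that its coloring $\psi$ guarantees $\psi(p_1'')\ne\psi(p_3'')$; transferring $\phi(p_1)=\psi(p_1'')$ and $\phi(p_3')=\psi(p_3'')$ forces $\phi(p_1)\ne\phi(p_3')$, which is exactly what is needed so that $p_3$ and $p_1'$ (both seeing the same six colored vertices in $\{v\}\cup N_G(v)\setminus\{p_3,p_1'\}$) do not end up with identical forbidden sets. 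Your sketch contains no mechanism to break this symmetry, so the extension step would stall at $p_3$ and $p_1'$.
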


\begin{proof}
Note that by \Cref{3-path lemma} and \Cref{tree lemma}, $u$, $v$ and $w$ are pairwise distinct and $d(u)=d(v)=d(w)=\Delta=7$. Let $H=G-(P\cup P')$. We add the 3-path $up''_1p''_2p''_3w$ in $H$ and let $P''=\{p''_1,p''_2,p''_3\}$ and let $H+P''$ be the resulting graph. 

Suppose that $\rho^*_H(uw)\geq 1$. Then, by \Cref{potlemma} with $k=3$, $\mad(H+P'')\leq \frac{18}{7}$. Observe that $|V(H+P'')|+|E(H+P'')|<|V(G)|+|E(G)|$, so $H+P''$ is colorable with a coloring $\psi$ by minimality of $G$. We define $\phi$ a coloring of $G$ as follows:
\begin{itemize}
\item If $x\in V(H)$, then $\phi(x) = \psi(x)$.
\item Let $\phi(p_1) = \psi(p''_1)$ and $\phi(p'_3)=\psi(p''_3)$.
\item Observe that $p_3$ and $p'_1$ can be colored. Otherwise, they have to see the same seven colors at distance at most 2. Since they see the same 6 colored vertices in $v\cup N_G(v)\setminus\{p_3,p'_1\}$ (as $d(v)=\Delta=7$ by \Cref{3-path lemma}), $\phi(p_1)$ must be the same as $\phi(p'_3)$, which is impossible because $\phi(p_1) = \psi(p''_1) \neq \psi(p''_3)=\phi(p'_3)$.
\item Finally, $p_2$ and $p'_2$ can be colored greedily since they see at most 4 different colors at distance 2 each.
\end{itemize}
As $\psi$ is a valid coloring of $H+P''$, $\phi$ is a valid coloring of $G$, which is a contradiction.

Suppose that $\rho^*_H(uw) = 0$ (recall that $\rho^*_H(uw)\geq 0$ since $H$ is a subgraph of $G$). By \Cref{potsuperset}, $\rho^*_H(uw)\geq \rho^*_H(u)$ and by \Cref{potsubgraph}, $\rho^*_H(u)=\rho^*_{G-(P\cup P')}(u)\geq \rho^*_{G-P}(u)$. Hence, $0\leq \rho^*_{G-P}(u) \leq \rho^*_H(uw) = 0$. Symmetrically, the same holds for $\rho^*_{G-P'}(w)$.
\end{proof}

\begin{figure}[H]
\begin{center}
\begin{tikzpicture}{thick}
\begin{scope}[every node/.style={circle,thick,draw,minimum size=1pt,inner sep=2}]
    \node[fill,label={above:$p'_1$}] (0) at (0,0) {};

    \node[label={above:$u$}] (100) at (-5,0) {};

    \node[label={above:$v$}] (1) at (-1,0) {};
    \node[fill,label={above:$p_3$}] (2) at (-2,0) {};
    \node[fill,label={above:$p_2$}] (3) at (-3,0) {};
    \node[fill,label={above:$p_1$}] (4) at (-4,0) {};
    \node[fill,label={above:$p'_2$}] (5) at (1,0) {};
    \node[fill,label={above:$p'_3$}] (6) at (2,0) {};
    \node[label={above:$w$}] (7) at (3,0) {};
    
    \node[fill,label={above:$p''_1$}] (8) at (-3,-0.9) {};
    \node[fill,label={above:$p''_2$}] (9) at (-1,-1.2) {};
    \node[fill,label={above:$p''_3$}] (10) at (1,-0.9) {};
\end{scope}

\begin{scope}[every edge/.style={draw=black,thick}]
    \path (1) edge (100);
    \path (1) edge (7);
    \path (100) edge[bend right,dashed] (7);
\end{scope}
\end{tikzpicture}
\caption{Two consecutives $3$-paths.}
\label{two 3-paths lemma figure}
\end{center}
\end{figure}
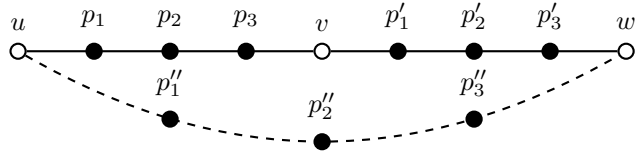

\begin{lemma} \label{three 3-paths lemma}
Graph $G$ has no three consecutive $3$-paths.
\end{lemma}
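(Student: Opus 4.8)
The idea is to combine two applications of \Cref{two 3-paths lemma} with a single application of \Cref{pot 3-path lemma}, with no new argument needed. Suppose for contradiction that $G$ contains three consecutive $3$-paths; spell the configuration out as a chain $u\,p_1p_2p_3\,v\,p'_1p'_2p'_3\,w\,p''_1p''_2p''_3\,z$, and set $P=\{p_1,p_2,p_3\}$, $P'=\{p'_1,p'_2,p'_3\}$, $P''=\{p''_1,p''_2,p''_3\}$. By \Cref{3-path lemma}, each of the four branch vertices $u,v,w,z$ has degree $\Delta=7$, and by \Cref{tree lemma} they are pairwise distinct (otherwise two or three of the consecutive $3$-paths would close up into a cycle consisting of $3$-paths), so the three $3$-paths and in particular the middle one $v\,p'_1p'_2p'_3\,w$ are genuine paths with distinct endvertices.

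First I would apply \Cref{two 3-paths lemma} to the consecutive pair $u\,p_1p_2p_3\,v$ and $v\,p'_1p'_2p'_3\,w$; its conclusion gives $\rho^*_{G-P'}(w)=0$ (it also gives $\rho^*_{G-P}(u)=0$, which we will not need). Then I would apply \Cref{two 3-paths lemma} a second time, now to the consecutive pair $v\,p'_1p'_2p'_3\,w$ and $w\,p''_1p''_2p''_3\,z$, reading the first $3$-path of this pair as the one incident to $v$; the conclusion then yields $\rho^*_{G-P'}(v)=0$ (and, symmetrically, $\rho^*_{G-P''}(z)=0$, unused). The one bookkeeping point to check is that both invocations delete exactly the same set $P'$, namely the interior of the central $3$-path, so that we simultaneously obtain $\rho^*_{G-P'}(v)=\rho^*_{G-P'}(w)=0$.

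Finally I would feed this into \Cref{pot 3-path lemma} applied to the middle $3$-path $v\,p'_1p'_2p'_3\,w$ with $k=3$ and $P=P'$: since $\rho^*_{G-P'}(v)\le\rho^*_{G-P'}(w)$ (both equal $0$), the lemma forces $\rho^*_{G-P'}(w)\ge 1$, contradicting $\rho^*_{G-P'}(w)=0$. Hence $G$ has no three consecutive $3$-paths. I do not expect any real obstacle here; the only delicate point is aligning the labels in the two applications of \Cref{two 3-paths lemma} so that both statements are about the interior $P'$ of the central $3$-path, after which \Cref{pot 3-path lemma} closes the argument immediately.
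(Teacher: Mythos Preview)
Your proposal is correct and follows essentially the same approach as the paper: two applications of \Cref{two 3-paths lemma} to the overlapping pairs of $3$-paths yield $\rho^*_{G-P'}(v)=\rho^*_{G-P'}(w)=0$ for the interior $P'$ of the middle $3$-path, and \Cref{pot 3-path lemma} then gives the contradiction. Your extra bookkeeping (distinctness of $u,v,w,z$ via \Cref{3-path lemma} and \Cref{tree lemma}, and the alignment of labels so that both applications concern $G-P'$) is accurate and only makes the argument more explicit than the paper's version.
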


\begin{proof}
Suppose by contradiction that $G$ has three consecutive 3-paths $up_1p_2p_3v$, $vp'_1p'_2p'_3w$, and $wp''_1p''_2p''_3x$. Let $P'=\{p'_1p'_2p'_3\}$. By applying \Cref{two 3-paths lemma} to $up_1p_2p_3v$ and $vp'_1p'_2p'_3w$, we get $\rho^*_{G-P'}(w)=0$. By applying \Cref{two 3-paths lemma} to $vp'_1p'_2p'_3w$ and $wp''_1p''_2p''_3x$, we get $\rho^*_{G-P'}(v)=0$. This is impossible due to \Cref{pot 3-path lemma}.
\end{proof}

\begin{lemma} \label{weird cases lemma3}
Let $u$ be a $7$-vertex and $up_1p_2v$ be a $2$-path incident to $u$ and let $P=\{p_1,p_2\}$. If $\rho^*_{G-P}(u)\leq \rho^*_{G-P}(v)$, then $u$ cannot be incident to six other $2$-paths where the other endvertices are $5^-$-vertices.
\end{lemma}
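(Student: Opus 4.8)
The plan is to assume the configuration occurs and produce a $2$-distance $(\Delta+1)=8$-colouring of $G$, contradicting minimality. Write the six ``other'' $2$-paths at $u$ as $ur_is_it_i$ with $d(t_i)\le 5$ ($1\le i\le 6$), so that $N_G(u)=\{p_1,r_1,\dots,r_6\}$ and $N^*_G(u)=\{p_1,r_1,\dots,r_6,p_2,s_1,\dots,s_6\}$ has size $14$. First I would run the easy reductions: by \Cref{2-path lemma}, $u\neq v$; since $u$ already has the seven neighbours $p_1,r_1,\dots,r_6$ it is not adjacent to $v$; and if $d(v)\le 6$ then $up_1p_2v$ is a $2$-path at $u$ with a $6^-$-endvertex, contradicting \Cref{weird cases lemma}, so $d(v)=7$ and $\{v\}\cup N_G(v)$ is a set of $8$ pairwise $2$-distant vertices (any colouring uses all $8$ colours there, and $\psi(p_2)\neq\psi(v)$). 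I would also record the potential estimates I need: \Cref{potlowerbound} with $A=P$ gives $\rho^*_{G-P}(uv)\ge 7\,|E(P,\{u,v\})|-\rho_G(P)=14-11=3$, so with \Cref{potsuperset}, \Cref{potadd} and the hypothesis $\rho^*_{G-P}(u)\le\rho^*_{G-P}(v)$ we get $\rho^*_{G-P}(v)\ge 2$ (and $\ge 1$ also from \Cref{pot 3-path lemma}); and \Cref{potlowerbound} with $A$ the set of all $14$ degree-$2$ vertices on the seven $2$-paths at $u$ yields a large lower bound on $\rho^*_{G-A}(\{p_2,v,t_1,\dots,t_6\})$.

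Next I would pass to an auxiliary graph. Delete $W=\{u,p_1,r_1,s_1,\dots,r_6,s_6\}$ (note $p_2$ is \emph{kept}) and set $H=G-W$; since $v\not\sim u$ and no neighbour of $v$ lies in $W$, $v$ still has degree $7$, so $\Delta(H)=7$ and $\mad(H)\le\frac{18}{7}$. To $H$ I would attach a single edge $p_2t_{i_0}$ for a suitable $i_0$, giving $G'$: this keeps $\Delta=7$ ($d_H(p_2)=1$, $d_H(t_{i_0})\le 4$), it strictly decreases $|V|+|E|$ (we delete $14$ vertices and $20$ edges of $G$ and add one edge), and, because $t_{i_0}$ becomes a $2$-distance neighbour of both $p_2$ and $v$ in $G'$, every $2$-distance colouring $\psi$ of $G'$ has $\psi(t_{i_0})\notin\{\psi(p_2),\psi(v)\}$. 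Checking $\mad(G')\le\frac{18}{7}$ --- i.e.\ $\rho^*_H(\{p_2,t_{i_0}\})\ge 7$ for an appropriate $i_0$, via \Cref{potlemma} --- is the step that genuinely uses the hypothesis $\rho^*_{G-P}(u)\le\rho^*_{G-P}(v)$, through the lower bounds on $\rho^*_{G-P}(v)$, $\rho^*_{G-P}(uv)$ and the boundary potential above (with \Cref{potlemma2}/\Cref{pot observation} to follow the potentials through the surgery); should no single $i_0$ suffice, one replaces the edge by a slightly larger but equally cheap gadget with the same colouring effect. By minimality $G'$ then has a $2$-distance $8$-colouring $\psi$, whose restriction to $V(H)$ is a valid $2$-distance $8$-colouring of $H$ still satisfying $\psi(t_{i_0})\notin\{\psi(p_2),\psi(v)\}$.

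Finally I would extend $\psi$ to $G$ by colouring the $14$ vertices of $W$ in the order $s_1,\dots,s_6$, then $u,p_1,r_1,\dots,r_6$. Each $s_i$ sees at most $d(t_i)\le 5$ already-coloured vertices, hence has at least three free colours. The eight vertices $u,p_1,r_1,\dots,r_6$ are exactly $\{u\}\cup N_G(u)$, so they are pairwise within distance $2$ and must receive all $8$ colours, which I would arrange by Hall's theorem (\Cref{Hall}) with the lists
\[
L(u)=\{1,\dots,8\}\setminus\{\psi(p_2),\psi(s_1),\dots,\psi(s_6)\},\qquad
L(p_1)=\{1,\dots,8\}\setminus\{\psi(p_2),\psi(v)\},\qquad
L(r_i)=\{1,\dots,8\}\setminus\{\psi(s_i),\psi(t_i)\}.
\]
All lists but $L(u)$ have size $\ge 6$, so Hall's condition can only fail on a set of $7$ or $8$ vertices, and a short analysis shows it fails precisely when $\psi(t_i)=\psi(p_2)$ for every $i$, or $\psi(t_i)=\psi(v)$ for every $i$ --- both ruled out by the edge $p_2t_{i_0}$ --- while coincidences among the $t_i$ (or among $\psi(t_i),\psi(p_2),\psi(v)$) are absorbed using the leftover freedom in the $\psi(s_i)$. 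A valid colouring of the eight-set makes $u,p_1,r_1,\dots,r_6$ satisfy all of their remaining constraints against $p_2,v$ and the $s_i,t_i$, so we obtain a $2$-distance $8$-colouring of $G$, the desired contradiction. I expect the main obstacle to be exactly the $\mad$-preservation in the middle step (the potential estimate guaranteeing the extra edge can be added); the Hall analysis for the eight-set is routine but delicate.
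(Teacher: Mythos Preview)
Your proposal has a genuine gap at exactly the point you yourself flag as ``the main obstacle'': you never prove that $\rho^*_H(\{p_2,t_{i_0}\})\ge 7$ for some $i_0$, and the fallback ``one replaces the edge by a slightly larger but equally cheap gadget with the same colouring effect'' is not an argument. In fact, your single--index strategy runs against the grain of the configuration: the paper shows (Claim~\ref{claim weird cases lemma3}) that for \emph{every} $i$ one has $\rho^*_{H'}(vv_i)\le 2$ in the graph $H'$ obtained by deleting all fourteen internal $2$-vertices; translated to your $H$ (which differs from $H'$ only by removing the isolated $u$ and keeping the pendant $p_2$), this forces $\rho^*_H(\{p_2,t_i\})\le 4$ for every $i$, so no edge $p_2t_{i_0}$ can be added while preserving $\mad\le\tfrac{18}{7}$. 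A weaker gadget (a $1$- or $2$-path) would be affordable potential-wise, but then $t_{i_0}$ is no longer within distance $2$ of both $p_2$ and $v$, and you lose precisely the colouring constraint $\psi(t_{i_0})\notin\{\psi(p_2),\psi(v)\}$ that your Hall argument relies on.

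The paper's route is structurally different and avoids this trap. It does not look for one good $i_0$; instead, for each $i$ it \emph{tries} to add a $2$-path $vp'_1p'_2v_i$ (cost $3$), shows that if this is possible then the resulting colouring extends to $G$ (a Hall argument very close to yours, using $\psi(p'_1)\ne\psi(v_i)$ to separate $L(u)$ from $L(q_i)$), and concludes $\rho^*_{H'}(vv_i)\le 2$ for all $i$. These six \emph{upper} bounds are then summed and confronted with the \emph{lower} bound $\sum_i\rho^*_{H'}(vv_i)\ge\rho^*_{H'}(vv_1\cdots v_6)+5\rho^*_{H'}(v)\ge 12+5$ coming from \Cref{potadd}, \Cref{potlowerbound}, and $\rho^*_{H'}(v)\ge 1$ (which is where the hypothesis $\rho^*_{G-P}(u)\le\rho^*_{G-P}(v)$ actually enters, via \Cref{pot 3-path lemma}). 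Separately, your Hall characterisation is not accurate as stated: the failure on the $7$- or $8$-set is governed by the \emph{pairs} $\{\psi(s_i),\psi(t_i)\}$, not by the $\psi(t_i)$ alone, so ``$\psi(t_i)=\psi(p_2)$ for every $i$'' is neither necessary nor sufficient; this is repairable using your freedom in choosing the $\psi(s_i)$, but it is not the ``short analysis'' you claim.
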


\begin{proof}
Suppose by contradiction that $u$ is incident to seven $2$-paths, say $uq_iq'_iv_i$ for $1\leq i\leq 6$ and $up_1p_2v$. Note that by \Cref{2-path lemma}, $u$ is distinct from $v$. Let $H= G-(\{q_i,q'_i|1\leq i\leq 6\}\cup \{p_1,p_2\})$.

We claim that:
\begin{claim} \label{claim weird cases lemma3}
For all $1\leq i\leq 6$, $\rho^*_H(vv_i)\leq 2$.
\end{claim}

\begin{proof}
W.l.o.g. suppose by contradiction that $\rho^*_H(vv_1)\geq 3$. We add the $2$-path $vp'_1p'_2v_1$ in $H$ and let $P'=\{p'_1,p'_2\}$ and let $H+P'$ be the resulting graph. Since $\rho^*_H(vv_1)\geq 3$, by \Cref{potlemma} with $k=2$, $\mad(H+P')\leq \frac{18}{7}$. By minimality of $G$, there exists a coloring $\psi$ of $H+P'$. We define $\phi$ a coloring of $G$ as follows:
\begin{itemize}
\item If $x\in V(H)$, then $\phi(x)=\psi(x)$.
\item Let $\phi(p_2)=\psi(p'_1)$.
\item Note that $d^*_G(q'_i)\leq 7$ since $d_G(v_i)\leq 5$ for all $1\leq i\leq 6$. As a result, we can always color them last.
\item Let $L(x)$ be the list of available colors left for a vertex $x$. Observe that we have $|L(p_1)|\geq 6$ and $|L(u)|,|L(q_1)|,\dots,|L(q_6)|\geq 7$. By \Cref{Hall}, the only way these eight vertices are not colorable is if $|L(p_1)\cup L(u)\cup L(q_1)\cup\dots\cup L(q_6)|\leq 7$. As a result, $|L(u)\cup L(q_1)| = 7$ and $|L(u)|, |L(q_1)|\geq 7$. In other words, $L(u)=L(q_1)$ . However, $u$ sees $\phi(p_2)=\psi(p'_1)\neq \psi(v_1)=\phi(v_1)$ which $q_1$ sees. So $L(u)\neq L(q_1)$.
\end{itemize}
We obtain a valid coloring of $G$ which is a contradiction.
\end{proof}

First, recall that $\rho^*_{G-P}(u)\leq \rho^*_{G-P}(v)$ and by \Cref{pot 3-path lemma}, $\rho^*_{G-P}(v)\geq 1$. As a result, $\rho^*_H(v)\geq \rho^*_{G-P}(v)\geq 1$ by \Cref{potsubgraph}.

Now, by \Cref{claim weird cases lemma3}, we have $\sum_{i=1}^6\rho^*_H(vv_i)\leq 2\cdot 6=12$. However, by \Cref{potadd}, then \Cref{potlowerbound} and the fact that $\rho^*_H(v)\geq 1$, we have $\sum_{i=1}^6\rho^*_H(vv_i)\geq \rho^*_H(vv_1\dots v_6) + 5\rho^*_H(v)\geq 12 + 5\cdot 1 = 17$.

\end{proof}

\begin{figure}[H]
\begin{center}
\begin{tikzpicture}{thick}
\begin{scope}[every node/.style={circle,thick,draw,minimum size=1pt,inner sep=2}]
    \node[label={above:$u$}] (0) at (-2,0) {};
    \node[fill,label={above:$q_1$}] (30) at (-1,1) {};
    \node[fill,label={above:$q'_1$}] (31) at (0,1) {};
    \node[label={above:$v_1$}] (32) at (1,1) {$5^-$};
    \node[fill,label={above:$q_6$}] (40) at (-1,-1) {};
    \node[fill,label={above:$q'_6$}] (41) at (0,-1) {};
    \node[label={above:$v_6$}] (42) at (1,-1) {$5^-$};

    \node[fill,label={above:$p_1$}] (1) at (-3,0) {};
    \node[fill,label={above:$p_2$}] (2) at (-4,0) {};
    \node[label={above:$v$}] (3) at (-5,0) {};
    
    \node[draw=none] (4) at (-0.5,0) {$\dots$};
    
    \node[fill,label={above:$p'_1$}] (5) at (-3,2) {};
    \node[fill,label={above:$p'_2$}] (6) at (-1,2.2) {};
\end{scope}

\begin{scope}[every edge/.style={draw=black,thick}]
	\path (0) edge (3);
    \path (0) edge (30);
    \path (0) edge (40);
      \path (30) edge (32);
      \path (40) edge (42);
      
   \path (3) edge[bend left=60,dashed] (32);
\end{scope}
\end{tikzpicture}
\caption{A $7$-vertex incident to seven $2$-paths, six of which have $5^-$-endvertices.}
\label{weird cases lemma3 figure}
\end{center}
\end{figure}
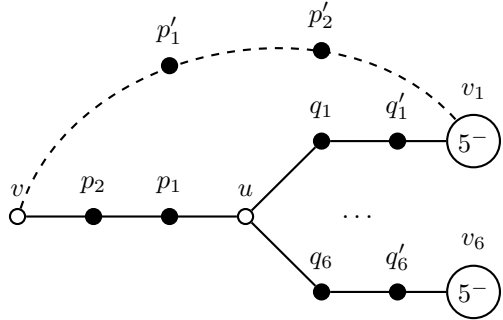

\begin{lemma} \label{sponsor lemma0}
Consider $u$ a $7$-vertex that is incident to a unique $3$-path $up_1p_2p_3v$ and let $P=\{p_1,p_2,p_3\}$. If $\rho^*_{G-P}(u)\leq \rho^*_{G-P}(v)$, then $u$ is incident to at most two $2$-paths where the other endvertices are $5^-$-vertices.
\end{lemma}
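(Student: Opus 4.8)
The plan is to argue by contradiction: suppose $u$ is incident to three $2$-paths $uq_iq_i'v_i$ ($1\le i\le 3$) with $d(v_i)\le 5$ (if there are more than three, pick any three) besides its unique $3$-path $up_1p_2p_3v$, and let $a,b,c$ denote the three remaining neighbours of $u$ and $P=\{p_1,p_2,p_3\}$. By \Cref{3-path lemma} we have $u\ne v$ and $d(v)=7$, say $N_G(v)=\{p_3,w_1,\dots,w_6\}$, and by \Cref{pot 3-path lemma} the hypothesis $\rho^*_{G-P}(u)\le\rho^*_{G-P}(v)$ forces $\rho^*_{G-P}(v)\ge 1$. The idea is to delete the three $2$-paths together with the $3$-path, but re-install a $3$-path between $u$ and $v$ at no net cost in $|V|+|E|$.

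Put $H=G-\bigl(P\cup\{q_1,q_1',q_2,q_2',q_3,q_3'\}\bigr)$. Since $H\subseteq G-P$, \Cref{potsubgraph} together with \Cref{potlowerbound} (applied with $A=P$, which gives $\rho^*_{G-P}(uv)\ge 7\cdot 2-\rho_G(P)=14-13=1$) yields $\rho^*_H(uv)\ge 1$; hence by \Cref{potlemma} with $k=3$ the graph $H'$ obtained from $H$ by adding a $3$-path $uy_1y_2y_3v$ has $\mad(H')\le\frac{18}{7}$. As $|V(H')|+|E(H')|<|V(G)|+|E(G)|$, minimality of $G$ produces a colouring $\psi$ of $H'$, which I extend to $G$ in the manner of \Cref{two 3-paths lemma}: keep $\psi$ on $V(H)$; set $\phi(p_1)=\psi(y_1)$ and $\phi(p_3)=\psi(y_3)$ (legal because in $H'$ the vertex $y_1$ is a $2$-distance neighbour of $a,b,c$, the vertex $y_3$ is a $2$-distance neighbour of $w_1,\dots,w_6$, and $\psi(y_1)\ne\psi(y_3)$); colour $p_2$ greedily, as it now sees only $\psi(y_1),\psi(y_3),\psi(u),\psi(v)$; colour $q_1,q_2,q_3$ using \Cref{Hall}; and colour $q_1',q_2',q_3'$ at the very end, each seeing at most $d^*_G(q_i')\le d(v_i)+2\le 7$ colours. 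Each $q_i$ is forbidden only the at most six colours $\psi(u),\psi(y_1),\psi(a),\psi(b),\psi(c),\psi(v_i)$, so $|L(q_i)|\ge 2$, and the Hall condition for $\{q_1,q_2,q_3\}$ can fail only when $L(q_1)=L(q_2)=L(q_3)$ has size exactly $2$, i.e.\ when $\psi(v_1)=\psi(v_2)=\psi(v_3)=:\alpha$ with $\alpha\notin\{\psi(u),\psi(y_1),\psi(a),\psi(b),\psi(c)\}$.

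This degenerate case is the whole point of the lemma, and I would attack it by squeezing the remaining freedom out of $\psi$. The vertex $y_2$ has only the four $2$-distance neighbours $y_1,y_3,u,v$ in $H'$, so if $\psi(y_2)=\alpha$ we may re-colour $y_2$ off $\alpha$; then $y_1$ has the six $2$-distance neighbours $u,y_2,y_3,a,b,c$ in $H'$, so if moreover $\psi(y_3)\ne\alpha$ we may re-colour $y_1$ to $\alpha$, and then every $|L(q_i)|\ge 3$, so \Cref{Hall} applies (now with $\phi(p_1)=\psi(y_1)=\alpha$, still legal). The case that escapes this is $\psi(y_3)=\alpha$: since $y_3$ is adjacent to $v$ and at distance $2$ from $w_1,\dots,w_6$, one gets $\{\psi(v),\psi(w_1),\dots,\psi(w_6)\}=\{1,\dots,8\}\setminus\{\alpha\}$, so that $v$ and its six neighbours other than $p_3$ occupy all colours but $\alpha$ — an extremely rigid situation. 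I expect this residual configuration to be the main obstacle: it has to be broken by a more global argument, either by showing that if it held for every colouring of $H'$ one could re-colour directly inside $N_G[v]$, or by re-installing a cheaper auxiliary structure, the missing unit of potential being supplied by $\rho^*_{G-P}(v)\ge 1$. Either way one reaches a $2$-distance $(\Delta+1)$-colouring of $G$, contradicting the choice of $G$.
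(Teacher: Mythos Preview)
Your setup is sound up to the degenerate case: the construction of $H'$, the verification that $\mad(H')\le\frac{18}{7}$ and $|V(H')|+|E(H')|<|V(G)|+|E(G)|$, and the Hall analysis for $q_1,q_2,q_3$ are all correct. The genuine gap is exactly where you say it is, and it is not cosmetic. In the residual configuration $\psi(v_1)=\psi(v_2)=\psi(v_3)=\psi(y_3)=\alpha$ with $\{\psi(v),\psi(w_1),\dots,\psi(w_6)\}=\{1,\dots,8\}\setminus\{\alpha\}$, the colour of $p_3$ is \emph{forced} to be $\alpha$ (its seven $2$-distance neighbours in $v\cup N_G(v)\setminus\{p_3\}$ already use the other seven colours), while $p_1$ cannot be $\alpha$ without colliding with $p_3$. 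So you cannot simultaneously free up the lists of the $q_i$ by putting $\alpha$ on $p_1$ and keep $p_3$ colourable. Recolouring $y_3$ is blocked for the same reason, and recolouring inside $N_G[v]$ is not available: nothing in your auxiliary graph $H'$ controls the colours on $w_1,\dots,w_6$. The single $3$-path $uy_1y_2y_3v$ simply does not carry enough information to break this rigidity; your closing sentence acknowledges this but does not supply the missing argument.

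The paper's proof avoids this dead end by never trying to extend one fixed coloring of one auxiliary graph. Instead it runs the potential method in earnest: it shows that if the configuration existed one would have $\rho^*_H(vv_i)\le 2$ for each $i$ (by adding a $2$-path $v\,p_1'p_2'\,v_i$ and extending) and $\rho^*_H(uv)\le 6$ (by adding the edge $uv$ and extending). Summing gives $\rho^*_H(uv)+\sum_i\rho^*_H(vv_i)\le 12$, whereas submodularity and \Cref{potlowerbound} together with $\rho^*_H(v)\ge 1$ force the same sum to be at least $13$. The point is that each auxiliary structure targets a \emph{different} failure mode of the greedy extension, and the contradiction comes from the incompatibility of the resulting potential bounds rather than from any single coloring. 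To repair your argument you would need a comparable second auxiliary construction that rules out the $\psi(y_3)=\alpha$ case; as written, the proof is incomplete.
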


\begin{proof}
Note that by \Cref{3-path lemma}, $u$ and $v$ are distinct vertices and $d(u)=d(v)=7$. Suppose by contradiction that $u$ is incident to at least three 2-paths $uq_1q'_1v_1$, $uq_2q'_2v_2$, and $uq_3q'_3v_3$ where $v_1$, $v_2$, and $v_3$ have degree at most 5. Note that by \Cref{2-path lemma}, $u$ is distinct from $v_i$ for all $1\leq i\leq 3$.

Let $H=G-\{p_1,p_2,p_3,q_1,q'_1,q_2,q'_2,q_3,q'_3\}$. Recall that $\rho^*_{G-P}(u)\leq \rho^*_{G-P}(v)$ and by \Cref{pot 3-path lemma}, $\rho^*_{G-P}(v)\geq 1$. As a result, $\rho^*_H(v)\geq \rho^*_{G-P}(v)\geq 1$ by \Cref{potsubgraph}.

We claim the following:
\begin{claim} \label{claim sponsor lemma0}
For all $1\leq i\leq 3$, $\rho^*_H(vv_i)\leq 2$ and $\rho^*_H(uv)\leq 6$.  
\end{claim}

\begin{proof}
Let us prove \Cref{claim sponsor lemma0} by contradiction.

First, suppose w.l.o.g. that $\rho^*_H(vv_1)\geq 3$. We add the 2-path $vp'_1p'_2v_1$ in $H$, let $P'=\{p'_1,p'_2\}$, and let $H+P'$ be the resulting graph. Since $\rho^*_H(vv_1)\geq 3$, by \Cref{potlemma} with $k=2$, $\mad(H+P')\leq \frac{18}7$. By minimality of $G$, there exists a coloring $\psi$ of $H+P'$. We define $\phi$ a coloring of $G$ as follows:
\begin{itemize}
\item If $x\in V(H)$, then $\phi(x)=\psi(x)$.
\item Let $\phi(p_3)=\psi(p'_1)$.
\item Note that $d^*_G(q'_i)\leq \Delta = 7$ since $d_G(v_i)\leq 5$ for all $1\leq i\leq 3$. As a result, we can always color them last. The same holds for $p_2$ since $d^*_G(p_2)=4$.
\item The only vertices left uncolored are $p_1,q_1,q_2,q_3$ and each of them has at least three available colors left. By \Cref{Hall}, they can be colored unless they have exactly the same three available colors each. Since they all see the same fours colors in $N_H(u)\cup\{u\}$ and $\phi(p_3)=\psi(p'_1)\neq \psi(v_1)=\phi(v_1)$, $p_1$ and $q_1$ cannot have the same three available colors.   
\end{itemize}

We obtain a valid coloring $\phi$ of $G$ so $\rho^*_H(vv_i)\leq 2$ for all $1\leq i\leq 3$.

Now, suppose that $\rho^*_H(uv)\geq 7$. We add the edge $e=uv$ in $H$. Since $\rho^*_H(uv)\geq 7$, by \Cref{potlemma} with $k=0$, $\mad(H+e)\leq \frac{18}7$. By minimality of $G$, there exists a coloring $\psi$ of $H+e$. We define $\phi$ a coloring of $G$ as follows:
 \begin{itemize}
\item If $x\in V(H)$, then $\phi(x)=\psi(x)$.
\item Let $\phi(p_3)=\psi(u)$.
\item Note that $d^*_G(q'_i)\leq \Delta = 7$ since $d_G(v_i)\leq 5$ for all $1\leq i\leq 3$. As a result, we can always color them last. The same holds for $p_2$ since $d^*_G(p_2)=4$.
\item We color $q_1,q_2,q_3$ which is possible since they have at least three available colors each. 
\item We finish by coloring $p_1$ which sees eight colored vertices but since it sees $\phi(u)=\phi(p_3)$ twice, it has at least one available color left.
\end{itemize}

We obtain a valid coloring $\phi$ of $G$ so $\rho^*_H(uv)\leq 6$. Thus, \Cref{claim sponsor lemma0} is true.
\end{proof}

By \Cref{claim sponsor lemma0}, we get $\rho_H^*(uv)+\sum^3_{i=1}\rho^*_H(vv_i) \leq 6 + 3\cdot 2 = 12$. However, by \Cref{potadd} then by \Cref{potlowerbound} and recall that $\rho^*_H(v)\geq 1$, we get $\rho_H^*(uv)+\sum^3_{i=1}\rho^*_H(vv_i) \geq \rho^*_H(uvv_1v_2v_3) + 3\rho^*_H(v) \geq 10 + 3\cdot 1 = 13$ which is a contradiction.

\end{proof}

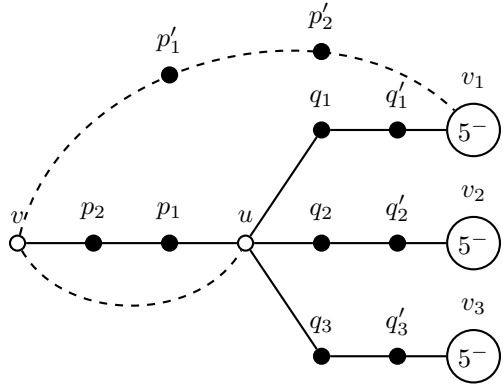
\begin{figure}[H]
\begin{center}
\begin{tikzpicture}{thick}
\begin{scope}[every node/.style={circle,thick,draw,minimum size=1pt,inner sep=2}]
    \node[label={above:$u$}] (0) at (-2,0) {};
    \node[fill,label={above:$q_2$}] (20) at (-1,0) {};
    \node[fill,label={above:$q'_2$}] (21) at (0,0) {};
    \node[label={above:$v_2$}] (22) at (1,0) {$5^-$};
    \node[fill,label={above:$q_1$}] (30) at (-1,1.5) {};
    \node[fill,label={above:$q'_1$}] (31) at (0,1.5) {};
    \node[label={above:$v_1$}] (32) at (1,1.5) {$5^-$};
    \node[fill,label={above:$q_3$}] (40) at (-1,-1.5) {};
    \node[fill,label={above:$q'_3$}] (41) at (0,-1.5) {};
    \node[label={above:$v_3$}] (42) at (1,-1.5) {$5^-$};

    \node[fill,label={above:$p_1$}] (1) at (-3,0) {};
    \node[fill,label={above:$p_2$}] (2) at (-4,0) {};
    \node[label={above:$v$}] (3) at (-5,0) {};
    
    \node[fill,label={above:$p'_1$}] (5) at (-3,2.23) {};
    \node[fill,label={above:$p'_2$}] (6) at (-1,2.54) {};
\end{scope}

\begin{scope}[every edge/.style={draw=black,thick}]
	\path (0) edge (3);
    \path (0) edge (20);
    \path (0) edge (30);
    \path (0) edge (40);
    
    \path (20) edge (22);
    \path (30) edge (32);
    \path (40) edge (42);
      
   \path (3) edge[bend left=60,dashed] (32);
   \path (3) edge[bend right=60,dashed] (0);
\end{scope}
\end{tikzpicture}
\caption{A $7$-vertex incident to at least four $2$-paths, three of which have $5^-$-endvertices.}
\label{sponsor lemma0 figure}
\end{center}
\end{figure}

\begin{lemma} \label{sponsor lemma1}
Consider $u$ a $7$-vertex that is incident to a unique $3$-path $up_1p_2p_3v$ and let $P=\{p_1,p_2,p_3\}$. If $\rho^*_{G-P}(u)\leq \rho^*_{G-P}(v)$, then $u$ has a neighbor that is neither a $(2,2,0)$-vertex nor a $2$-vertex belonging to a $2$-path.
\end{lemma}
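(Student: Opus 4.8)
The plan is to argue by contradiction. Assume $u$ is a $7$-vertex incident to the unique $3$-path $up_1p_2p_3v$ with $\rho^*_{G-P}(u)\le \rho^*_{G-P}(v)$, yet every neighbour of $u$ other than $p_1$ is a $(2,2,0)$-vertex or a $2$-vertex lying on a $2$-path; call these neighbours $w_1,\dots,w_6$. By \Cref{pot 3-path lemma}, $\rho^*_{G-P}(v)\ge 1$; by \Cref{3-path lemma}, $u\ne v$ and $d(u)=d(v)=7$; and by \Cref{2-path lemma} (applied to a $2$-path carried by a $(2,2,0)$-neighbour, whose near endpoint has degree $3<\Delta-1$) the far endpoint of each such $2$-path is a $7$-vertex. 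Say $t$ of the $w_j$ are $2$-vertices on $2$-paths $uw_jw_j'v_j$ and the other $6-t$ are $(2,2,0)$-vertices, each carrying two $2$-paths to $7$-vertices. Let $A$ consist of $p_1,p_2,p_3$, of $w_j$ and its partner $w_j'$ for each $w_j$ of the first kind, and of $w_j$ together with the four internal $2$-vertices on its two $2$-paths for each $w_j$ of the second kind, and set $H=G-A$. Every neighbour of $u$ lies in $A$, so $u$ is isolated in $H$, and since $H\subseteq G-P$ we have $\rho^*_H(v)\ge \rho^*_{G-P}(v)\ge 1$ by \Cref{potsubgraph}. Let $X$ be the set of exposed vertices --- the $v_j$, and the two far endpoints of each $(2,2,0)$-neighbour's $2$-paths --- so that $N(A)\setminus A=\{u,v\}\cup X$ and, generically, $|X|=12-t$; the finitely many degenerate cases where two of the listed vertices coincide, or one lies in $A$, are each dispatched by a short direct reduction.

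The heart of the matter is the claim that $\rho^*_H(vx)\le 2$ for every $x\in X$. Suppose instead $\rho^*_H(vx)\ge 3$ for some $x$. By \Cref{potlemma} with $k=2$, adding a $2$-path $vp'_1p'_2x$ to $H$ yields a graph $H'$ with $\mad(H')\le\frac{18}{7}$; since $|V(H')|+|E(H')|<|V(G)|+|E(G)|$ and $v$ still has degree $7$, minimality of $G$ gives a $2$-distance $8$-colouring $\psi$ of $H'$. I extend $\psi$ to $G$ as follows. Keep $\psi$ on $V(H)$, but first exploit that $u$ is isolated in $H'$ to re-set $\psi(u)$ to a colour chosen conveniently. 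Then colour the deleted $2$-vertices from the outermost layer inwards: give the vertex of $A$ adjacent to $x$ the colour $\psi(p'_2)$, give $p_3$ the colour $\psi(p'_1)$, and colour each remaining outermost $2$-vertex with one of its available colours (it sees at most seven coloured vertices; the single tight spot, an outermost $w_j'$ whose endpoint $v_j$ has degree $7$ and which also sees $u$, is made to work by the choice of $\psi(u)$). After that, colour the seven vertices $p_1,w_1,\dots,w_6$ --- pairwise at distance at most two --- simultaneously via \Cref{Hall}, using that the borrowed colours $\psi(p'_1),\psi(p'_2)$ and a judicious choice of colours on the remaining inner deleted $2$-vertices keep the lists of $p_1,w_1,\dots,w_6$ from all coinciding so that Hall's condition is met; finally colour $p_2$ and the inner $2$-vertices of the $(2,2,0)$-neighbours, each of which then sees at most four colours. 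This $2$-distance $8$-colours $G$, a contradiction, proving the claim.

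Finally I combine the claim with the potential function. Listing $X=\{x_1,\dots,x_m\}$ with $m=12-t$, iterating \Cref{potadd} gives $\sum_{i=1}^m\rho^*_H(vx_i)\ge\rho^*_H(\{v\}\cup X)+(m-1)\rho^*_H(v)$. On the other hand $A$ sends $|E(A,N(A)\setminus A)|=20-t$ edges out, has $|E(G[A])|=26-3t$ internal edges and $|A|=33-3t$ vertices, so $\rho_G(A)=9(33-3t)-7(26-3t)=115-6t$; \Cref{potlowerbound} then gives $\rho^*_H(\{u,v\}\cup X)\ge 7(20-t)-(115-6t)=25-t$, and since $u$ is isolated in $H$ this means $\rho^*_H(\{v\}\cup X)\ge 16-t$. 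With $\rho^*_H(v)\ge 1$ we obtain $\sum_{i=1}^m\rho^*_H(vx_i)\ge(16-t)+(12-t-1)=27-2t$, whereas the claim gives $\sum_{i=1}^m\rho^*_H(vx_i)\le 2m=24-2t$; since $27-2t>24-2t$ this is a contradiction, and the lemma follows. I expect the colouring extension inside the claim to be the main obstacle: scheduling the many deleted $2$-vertices of $2$-distance degree up to $9$ (such as $p_3$ and the outer $2$-vertices hanging from $7$-endpoints), making use of the free colour at the isolated $u$ and of the slack at the inner $2$-vertices, and above all verifying Hall's condition at $\{p_1,w_1,\dots,w_6\}$, together with the routine but numerous degenerate-coincidence cases.
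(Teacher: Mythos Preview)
Your overall strategy is genuinely different from the paper's and, if it worked, would be considerably cleaner: you delete a larger set (including the $(2,2,0)$-vertices and all their pendant $2$-path interiors), aim for a single uniform bound $\rho^*_H(vx)\le 2$ for every exposed endpoint $x$, and finish with one arithmetic inequality. The paper instead keeps the $(2,2,0)$-structure in $H$, proves four separate claims (each adding \emph{two} paths/edges simultaneously), and then does a long case analysis on $k=0,\dots,6$.

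The gap is in your proof of the claim. Your colouring extension relies on three things that are only asserted: (i) that a single free choice of $\psi(u)$ simultaneously handles every tight outermost vertex $q'_i$ with $d(v_i)=7$, (ii) that the same choice of $\psi(u)$ together with the two borrowed colours $\psi(p'_1),\psi(p'_2)$ forces Hall's condition on $\{p_1,w_1,\dots,w_6\}$, and (iii) that a ``judicious choice of colours on the remaining inner deleted $2$-vertices'' helps --- yet you also say those inner vertices are coloured \emph{after} the Hall step, so they cannot influence it. Concretely, at the Hall step every list lies inside the $7$-set $[8]\setminus\{\psi(u)\}$; the two failure modes are (a) all six $L(w_i)$ coincide in a $5$-set, and (b) $\phi(p_3)$ belongs to every forbidden pair. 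Your single $2$-path addition guarantees only that $\psi(p'_1)\ne\psi(p'_2)$ and $\psi(p'_1)\ne\psi(x)$, which rules out (a) and (b) occurring \emph{together} when $x=v_{i_0}$, but does not rule out either one separately; and the residual freedom in $\psi(u)$ is constrained by (i) (it must lie in the intersection of several $7$-sets and must differ from $\psi(p'_2)$), so you cannot freely set $\psi(u)$ equal to one of the forbidden pair to dissolve (a). This is precisely why the paper, for $k\ge 2$, adds \emph{two} paths at once in each of Claims~\ref{claim2 sponsor lemma1}--\ref{claim5 sponsor lemma1}: one addition to kill (a) and one to kill (b). Your single $2$-path plus the isolated colour at $u$ is not shown to be an adequate substitute, and I do not see how to complete the argument without either a second path addition or a substantially more detailed case analysis --- at which point you are essentially redoing the paper's proof.
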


\begin{figure}[H]
\begin{center}
\begin{tikzpicture}{thick}
\begin{scope}[every node/.style={circle,thick,draw,minimum size=1pt,inner sep=2}]
    \node[label={above:$u$}] (0) at (-2,0) {};
	\node[fill,label={above:$q_1$}] (13) at (-1,2.5) {};
    \node[fill,label={above:$q'_1$}] (14) at (0,2.5) {};
    \node[label={above:$v_1$}] (15) at (1,2.5) {};    
    
    \node[fill,label={above:$q_k$}] (20) at (-1,1.5) {};
    \node[fill,label={above:$q'_k$}] (21) at (0,1.5) {};
    \node[label={above:$v_k$}] (22) at (1,1.5) {};
	
	\node[fill,label={above:$w_1$}] (35) at (-1,0) {};    
    \node[fill,label={above:$r_1$}] (30) at (0,0.5) {};
    \node[fill,label={above:$r'_1$}] (31) at (1,0.5) {};
    \node[label={above:$w'_1$}] (32) at (2,0.5) {};
    \node[fill,label={above:$s_1$}] (40) at (0,-0.5) {};
    \node[fill,label={above:$s'_1$}] (41) at (1,-0.5) {};
    \node[label={above:$w''_1$}] (42) at (2,-0.5) {};
    
    \node[fill,label={above:$w_l$}] (55) at (-1,-2) {};
    \node[fill,label={above:$r_l$}] (50) at (0,-1.5) {};
    \node[fill,label={above:$r'_l$}] (51) at (1,-1.5) {};
    \node[label={above:$w'_l$}] (52) at (2,-1.5) {};
    \node[fill,label={above:$s_l$}] (60) at (0,-2.5) {};
    \node[fill,label={above:$s'_l$}] (61) at (1,-2.5) {};
    \node[label={above:$w''_l$}] (62) at (2,-2.5) {};

    \node[fill,label={above:$p_1$}] (3) at (-3,0) {};
    \node[fill,label={above:$p_2$}] (4) at (-4,0) {};
    \node[fill,label={above:$p_3$}] (5) at (-5,0) {};
    \node[label={above:$v$}] (6) at (-6,0) {};
    
    \node[draw=none] (8) at (-0.5,2.1) {$\dots$};
    
    \node[draw=none] (9) at (-1,-1) {$\dots$};
\end{scope}

\begin{scope}[every edge/.style={draw=black,thick}]
	\path (0) edge (6);
    \path (0) edge (13);
    \path (0) edge (20);
	\path (0) edge (35);
	\path (0) edge (55);    
    
    \path (35) edge (30);
    \path (35) edge (40);
    \path (55) edge (50);
    \path (55) edge (60);
      \path (13) edge (15);
      \path (20) edge (22);
      \path (30) edge (32);
      \path (40) edge (42);
      \path (50) edge (52);
      \path (60) edge (62);
   
\end{scope}
\end{tikzpicture}
\caption{A $7$-vertex that is incident to a $3$-path, $k$ $2$-paths, and $l$ $(2,2,0)$-vertices where $k+l=6$.}
\label{sponsor lemma1 figure}
\end{center}
\end{figure}
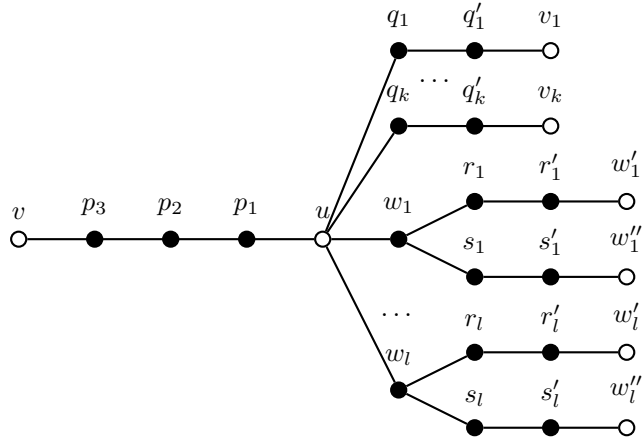

\begin{proof}
Suppose by contradiction that $u$ is incident to $k$ $2$-paths $uq_iq'_iv_i$ for $1\leq i\leq k$ and adjacent to $l$ $(2,2,0)$-vertices $w_j$ for $1\leq j\leq l$ where $k+l=6$. For all $1\leq j\leq l$, let $w_jr_jr'_jw'_j$ and $w_js_js'_jw''_j$ be the $2$-paths incident to $w_j$.
Due to \Cref{3-path lemma}, \Cref{2-path lemma} and \Cref{2-path lemma1}, $u$ is distinct from $v,v_1,\dots,v_k,w_1,\dots,w_l,w'_1,\dots,w'_l,w''_1,\dots,w''_l$ and for all $1\leq j\leq l$, $w_j$ is distinct from $w'_1,\dots,w'_l,w''_1,\dots,w''_l$.

We claim that:
\begin{claim} \label{claim sponsor lemma1}
For all subgraph $H$ of $G-P$, $\rho^*_H(v)\geq 1$.
\end{claim}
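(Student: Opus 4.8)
The plan is to read the claim as a bookkeeping step whose only purpose is to record a \emph{monotone} lower bound on the potential of $v$, usable for every auxiliary subgraph $H$ that will be built later in the proof of \Cref{sponsor lemma1}. Accordingly, I would derive it directly from two facts already in hand: \Cref{pot 3-path lemma} applied to the 3-path incident to $u$, and the subgraph monotonicity of the potential recorded in \Cref{potsubgraph}.

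First I would apply \Cref{pot 3-path lemma} with $k=3$ to the 3-path $up_1p_2p_3v$ and $P=\{p_1,p_2,p_3\}$. Its hypothesis $\rho^*_{G-P}(u)\le\rho^*_{G-P}(v)$ is exactly the standing assumption of \Cref{sponsor lemma1}, so the lemma gives $\rho^*_{G-P}(v)\ge 1$. Note that this uses only that $up_1p_2p_3v$ is a genuine 3-path with $u\ne v$, which holds by \Cref{3-path lemma}; in particular $v\notin P$, so $v$ is a vertex of $G-P$ and of every subgraph $H$ of $G-P$ under consideration, which makes $\rho^*_H(v)$ meaningful.

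Then, for an arbitrary subgraph $H$ of $G-P$, I would invoke \Cref{potsubgraph} with $A=\{v\}$ and ambient graph $G-P$: since $H$ is a subgraph of $G-P$ containing $v$, we get $\rho^*_H(v)\ge\rho^*_{G-P}(v)$. Chaining this with the previous inequality yields $\rho^*_H(v)\ge 1$, which is the claim. I do not expect any real obstacle here — the content is entirely ``transporting'' the bound $\rho^*_{G-P}(v)\ge 1$ through the subgraph inequality — and the only point requiring a line of care is the observation above that $v$ survives in $G-P$ and in every relevant $H$, so that the inequalities are legitimately applied.
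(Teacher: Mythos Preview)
Your proposal is correct and follows exactly the paper's own argument: use the hypothesis $\rho^*_{G-P}(u)\le\rho^*_{G-P}(v)$ together with \Cref{pot 3-path lemma} to get $\rho^*_{G-P}(v)\ge 1$, then apply \Cref{potsubgraph} to transport this bound to any subgraph $H$ of $G-P$ containing $v$. Your extra care in checking that $v\in V(H)$ (via \Cref{3-path lemma}) is a welcome clarification the paper leaves implicit.
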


\begin{proof}
Indeed, recall that $\rho^*_{G-P}(u)\leq \rho^*_{G-P}(v)$ and by \Cref{pot 3-path lemma}, $\rho^*_{G-P}(v)\geq 1$. As a result, $\rho^*_H(v)\geq \rho^*_{G-P}(v)\geq 1$ by \Cref{potsubgraph}.
\end{proof}

Now we will prove the lemma for each possible value of $0\leq k\leq 6$.
\begin{itemize}
\item Suppose that $k=0$.\\
Let $H=G-(\{u,p_1,p_2\}\cup\{w_j,r_j,s_j|1\leq j\leq 6\}$). By minimality of $G$, there exists a coloring of $H$. We will extend this coloring to $G$:
\begin{itemize}
\item If $x\in V(H)$, then $\phi(x)=\psi(x)$.
\item Note that $d^*_G(p_2)=4$ and $d^*_G(r_j)=d^*_G(s_j)=5$ for all $1\leq j\leq 6$ so we can always color them last.
\item We color $w_1,w_2,\dots,w_6$ since they have six available colors each.
\item We color $p_1$ then $u$.
\end{itemize}

We obtain a valid coloring of $G$ so $k\neq 0$. 
 
\item Suppose that $k=1$.
Let $H=G-(\{u,p_1,p_2,p_3,q_1,q'_1\}\cup\{w_j,r_j,s_j|1\leq j\leq 5\})$. We add the 3-path $vp'_1p'_2p'_3v_1$ in $H$, let $P'=\{p'_1,p'_2,p'_3\}$ and let $H+P'$ be the resulting graph. Since $\rho^*_H(vv_1)\geq \rho^*_H(v)\geq 1$ by \Cref{potsuperset} and \Cref{claim sponsor lemma1}, we get $\mad(H+P')\leq \frac{18}{7}$ by \Cref{potlemma}. By minimality of $G$, there exists a coloring $\psi$ of $H$. We will extend this coloring to $G$:
\begin{itemize}
\item If $x\in V(H)$, then $\phi(x)=\psi(x)$.
\item Let $\phi(p_3)=\psi(p'_1)$ and $\phi(q'_1)=\psi(p'_3)$.
\item Note that $d^*_G(p_2)=4$ and $d^*_G(r_j)=d^*_G(s_j)=5$ for all $1\leq j\leq 5$ so we can always color them last.
\item We color $w_1,w_2,\dots,w_5$ and $q_1$ since they have six available colors each.
\item We can color $u$ and $p_1$ unless they have exactly the same color left which is impossible since they see the same six colors in $\phi(w_1),\phi(w_2),\dots,\phi(w_5),\phi(q_1)$ and $p_1$ sees $\phi(p_3)=\psi(p'_1)\neq\psi(p'_3)=\phi(q'_1)$ which $u$ sees.
\end{itemize}

We obtain a valid coloring of $G$ so $k\neq 1$.

\item Suppose that $k\geq 2$.

Let $H=G-(\{u,p_1,p_2,p_3\}\cup\{q_i,q'_i|1\leq i\leq k\})$. Note that $\mad(H)\leq \frac{18}{7}$ since $H$ is a subgraph of $G$.

First, observe that by minimality of $G$, there exists a coloring $\psi$ of $H$. If we can define a coloring $\phi$ that extends $\psi$ to $G$, then we obtain a contradiction. So, let us see the potential problems.
\begin{itemize}
\item First, if $x\in V(H)\setminus\{w_j,r_j,s_j|1\leq j\leq l\}$, then we are forced to repeat the same colors for $x$. Thus, let $\phi(x)=\psi(x)$.
\item For all $1\leq i\leq k$, we might have only one choice of colors for $q'_i$ so we color them accordingly. The same holds for $p_3$.
\item Since $d^*_G(p_2)=4$ and $d^*_G(r_j)=d^*_G(s_j)=5$ for all $1\leq j\leq l$, we can always color them last.
\item The remaining uncolored vertices are exactly $N_G(u)\cup\{u\}$. Let $L(x)$ be the list of available colors left for a vertex $x$. Observe that $|L(u)|\geq 8-k\geq 2$, $|L(q_1)|,\dots,|L(q_k)|,|L(w_1)|,\dots,|L(w_k)|\geq 6$, and $|L(p_1)|\geq 7$.
\item Due to \Cref{Hall}, the only two reasons that makes these eight remaining vertices uncolorable are the following:
\begin{itemize}
\item We have seven vertices in $N_G(u)\cup\{u\}\setminus\{p_1\}$ but $|L(u)\cup L(q_1)\cup\dots\cup L(q_k)\cup L(w_1)\cup\dots\cup L(w_l)|\leq 6$. Since $|L(q_1)|,\dots,|L(q_k)|,|L(w_1)|,\dots,|L(w_k)|\geq 6$, we have $L(q_1)=\dots=L(q_k)=L(w_1)=\dots=L(w_l)$ and $|L(q_1)|=6$. In other words, $q_1,\dots,q_k,w_1,\dots,w_k$ all see the same two colors. More precisely, $\{\phi(q'_1),\phi(v_1)\}=\dots=\{\phi(q'_k),\phi(v_k)\}=\{\phi(r'_1),\phi(s'_1)\}=\dots=\{\phi(r'_l),\phi(s'_l)\}$.
\item Or, we have eight vertices in $N_G(u)\cup\{u\}$ but $|L(u)\cup L(p_1)\cup L(q_1)\cup\dots\cup L(q_k)\cup L(w_1)\cup\dots\cup L(w_l)|\leq 7$. Since $|L(p_1)|\geq 7$, we have $|L(p_1)|=7$. Moreover, $L(q_1),\dots,L(q_k),L(w_1),\dots,L(w_k)\subseteq L(p_1)$. In other words, $q_1,\dots,q_k,w_1,\dots,w_k$ all see $\phi(p_3)$. More precisely, $\phi(p_3)\in \{\phi(q'_i),\phi(v_i)\}$ for all $1\leq i\leq k$ and $\phi(p_3)\in \{\phi(r'_j),\phi(s'_j)\}$ for all $1\leq j\leq l$. 
\end{itemize}
\end{itemize}   
To solve these two problems, the idea is to add two paths (or edges) to $H$, each one preventing one problem. If we can add these two paths, we can define a valid coloring $\phi$ of $G$, thus obtaining a contradiction. As a consequence, we cannot add both paths. However, it results in an upper bound on the potential of the endvertices of the added paths. In \Cref{claim2 sponsor lemma1,claim3 sponsor lemma1,claim4 sponsor lemma1,claim5 sponsor lemma1}, we show these upper bounds by using this technique of adding two paths to the graph $H$ and constructing a valid coloring of $G$. Once we obtain all of these inequalities on the potential in $H$ of $v,v_1,\dots,v_k,w_1,\dots,w_l$, we show, for each value of $k$, that the obtained set of inequalities is not feasible, thus obtaining a contradiction.

\begin{claim}\label{claim2 sponsor lemma1}
For $k\geq 2$ and $j\geq 1$, if there exists $1\leq i\neq i'\leq k$ and $1\leq j\leq l$ such that $\rho^*_H(v_iv_{i'})\geq 3$ and $\rho^*_H(vw_j)\geq 7$, then $\rho^*_H(vw_jv_iv_{i'})\leq 9$.
\end{claim}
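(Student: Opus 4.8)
The plan is to argue by contradiction: suppose $\rho^*_H(vw_jv_iv_{i'})\geq 10$ and produce a $2$-distance $(\Delta+1)$-coloring of $G$, contradicting minimality. The mechanism is the one announced just before the claim: add to $H$ a $2$-path $P_1=v_ip''_1p''_2v_{i'}$ (to kill the first failure mode, the one localised at $q_i$ and $q_{i'}$) and the edge $P_2=vw_j$ (to kill the second, localised at $w_j$). The hypothesis $\rho^*_H(v_iv_{i'})\geq 3=7-2\cdot2$ lets us add $P_1$ with $\mad\leq\frac{18}{7}$ preserved, by \Cref{potlemma} with $k=2$. For $P_2$: $w_j$ is a $(2,2,0)$-vertex whose three neighbours are $u,r_j,s_j$, none of which is $v$, so $vw_j\notin E(G)$; and \Cref{potlemma2} applied to the first addition gives $\rho^*_{H+P_1}(vw_j)=\rho^*_H(vw_j)\geq 7$ or $\rho^*_{H+P_1}(vw_j)\geq\rho^*_H(vw_jv_iv_{i'})-(7-2\cdot2)\geq 10-3=7$, so \Cref{potlemma} with $k=0$ lets us add $P_2$ to $H+P_1$, still with $\mad\leq\frac{18}{7}$. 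Since $H$ was formed from $G$ by deleting $u,p_1,p_2,p_3$ and the $2k\geq 4$ vertices $q_i,q'_i$ together with their incident edges, while $P_1\cup P_2$ adds only $2$ vertices and $4$ edges, $|V(H+P_1+P_2)|+|E(H+P_1+P_2)|<|V(G)|+|E(G)|$, and minimality yields a coloring $\psi$ of $H+P_1+P_2$.

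Next I would extend $\psi$ to $G$ along the extension skeleton already given in the proof of \Cref{sponsor lemma1} for the case $k\geq 2$: copy $\psi$ on $V(H)\setminus\{w_{j'},r_{j'},s_{j'}:1\leq j'\leq l\}$; colour $p_3$; colour the vertices $q'_{i''}$ ($1\le i''\le k$), each of which has at least one free colour; then finish with $p_2$ and the $r_{j'},s_{j'}$ (these have $d^*\leq 5$) and with $N_G(u)\cup\{u\}=\{u,p_1,q_1,\dots,q_k,w_1,\dots,w_l\}$ via \Cref{Hall}. The only departures from that skeleton are the forced choices $\phi(q'_i)=\psi(p''_1)$, $\phi(q'_{i'})=\psi(p''_2)$ and $\phi(p_3)=\psi(w_j)$. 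Each is legal: $p''_1$ is adjacent to $v_i$ and within distance $2$ in $H+P_1$ of every $H$-neighbour of $v_i$, i.e. of all already-coloured $2$-distance neighbours of $q'_i$ (and symmetrically for $q'_{i'}$, $p''_2$, $v_{i'}$); and because of the edge $vw_j$, $\psi(w_j)$ differs from $\psi(v)$ and from the colours of all $H$-neighbours of $v$, which are exactly the already-coloured $2$-distance neighbours of $p_3$.

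These departures are precisely what disables the two failure modes of \Cref{sponsor lemma1}. First, $p''_1$ is adjacent to $p''_2$ and at distance $2$ from $v_{i'}$ in $H+P_1$, so $\{\phi(q'_i),\phi(v_i)\}=\{\psi(p''_1),\psi(v_i)\}$ and $\{\phi(q'_{i'}),\phi(v_{i'})\}=\{\psi(p''_2),\psi(v_{i'})\}$ are distinct $2$-element sets; hence $L(q_i)\neq L(q_{i'})$ and $|L(q_1)\cup\dots\cup L(q_k)\cup L(w_1)\cup\dots\cup L(w_l)|\geq 7$, excluding the first mode. Second, $r'_j$ and $s'_j$ lie at distance $2$ from $w_j$ in $H$, so $\phi(p_3)=\psi(w_j)\notin\{\phi(r'_j),\phi(s'_j)\}$; thus $\phi(p_3)\in L(w_j)$ while $\phi(p_3)\notin L(p_1)$, so $|L(p_1)\cup L(w_j)|\geq|L(p_1)|+1\geq 8$, excluding the second mode. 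By \Cref{Hall} the set $N_G(u)\cup\{u\}$ is then colourable, $\phi$ is a valid $2$-distance $(\Delta+1)$-coloring of $G$, a contradiction; hence $\rho^*_H(vw_jv_iv_{i'})\leq 9$. The potential bookkeeping of the first paragraph is routine once \Cref{potlemma,potlemma2} are in hand; the one genuinely delicate point is orchestrating the extension so that \emph{both} failure modes die at once, which is exactly why the claim needs two distinct $2$-path indices $i\neq i'$ and a $(2,2,0)$-index $j$. (The distinctness of $u,v,v_i,v_{i'},w_j,\dots$ used throughout follows from \Cref{3-path lemma,2-path lemma,2-path lemma1} as in \Cref{sponsor lemma1}; the only coincidence needing a word, $v_i=v_{i'}$, is harmless since adding a closed $2$-path at $v_i$ is still covered by \Cref{potlemma} and still produces $\psi(p''_1)\neq\psi(p''_2)$.)
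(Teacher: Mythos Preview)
Your proof is correct and follows essentially the same approach as the paper: the same construction (add a $2$-path between $v_i,v_{i'}$ and an edge $vw_j$, justified via \Cref{potlemma,potlemma2}) and the same two key observations (the added $2$-path forces $\{\phi(q'_i),\phi(v_i)\}\neq\{\phi(q'_{i'}),\phi(v_{i'})\}$; the added edge makes $\psi(w_j)$ a legal and useful value for $\phi(p_3)$). The only cosmetic difference is in the extension step: the paper fixes $\phi(w_j)=\psi(w_j)=\phi(p_3)$ explicitly and then colours $u$, the remaining $w$'s and $q$'s, then $q_i,q_{i'}$, then $p_1$ by hand, whereas you leave $w_j$ uncoloured and invoke \Cref{Hall} on all of $N_G(u)\cup\{u\}$ at once, showing directly that both failure modes are excluded---which is in fact closer to the general template the paper itself sets up just before the claims.
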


\begin{proof}
Suppose by contradiction that, w.l.o.g., $\rho^*_H(v_1v_2)\geq 3$, $\rho^*_H(vw_1)\geq 7$, and $\rho^*_H(vw_1v_1v_2)\geq 10$. We add the 2-path $v_1p'_1p'_2v_2$ in $H$, let $P'=\{p'_1,p'_2\}$, and let $H+P'$ be the resulting graph. Since $\rho^*_H(v_1v_2)\geq 3$, we get $\mad(H+P')\leq \frac{18}{7}$ by \Cref{potlemma}. Observe that $\rho^*_{H+P'}(vw_1)\geq 7$, otherwise, by \Cref{potlemma2}, we get  $6\geq \rho^*_{H+P'}(vw_1)=\rho^*_H(vw_1)\geq 7$ or $10\leq \rho^*_H(vw_1v_1v_2)\leq 6+3=9$ which are both contradictions. Now, we add the edge $e=vw_1$ in $H+P'$ and by \Cref{potlemma}, we have $\mad(H+P'+e)\leq \frac{18}{7}$. By minimality of $G$, there exists a coloring $\psi$ of $H+P'+e$. We define $\phi$ a coloring of $G$ as follows:

\begin{itemize}
\item If $x\in V(H)\setminus\{w_j,r_j,s_j|1\leq j\leq l\}$, then $\phi(x)=\psi(x)$.
\item We color $q'_i$ for all $3\leq i\leq k$ since they all have at least one available color each.
\item Let $\phi(p_3) = \phi(w_1) = \psi(w_1)$, $\phi(q'_1) = \psi(p'_1)$ and $\phi(q'_2)=\psi(p'_2)$.
\item Note that $d^*_G(p_2)=4$ and $d^*_G(r_j)=d^*_G(s_j)=5$ for all $1\leq j\leq l$ so we can always color them last.
\item We color $u$ who has at least two available colors left as $u$ sees $\phi(q'_1),\dots,\phi(q'_k)$, and $\phi(w_1)$.
\item Then, we color $w_2,\dots,w_l$ and $q_3,\dots,q_k$ since there are three of them and each one has at least three available colors left.
\item Now, we color $q_1$ and $q_2$ which each has at least one color left. These colors are different since $q_1$ and $q_2$ see the same five colors in $\phi(u)$, $\phi(w_1),\dots,\phi(w_l)$ and $\phi(q_3),\dots,\phi(q_k)$ and $q_1$ sees $\{\phi(q'_1),\phi(v_1)\}=\{\psi(v_1),\psi(p'_1)\}\neq\{\psi(p'_2),\psi(v_2)\}=\{\phi(q'_2),\phi(v_2)\}$ which $q_2$ sees.
\item We color $p_1$ since it sees eight colored vertices but two of them, namely $w_1$ and $p_3$ have the same color.
\end{itemize}

We obtain a valid coloring of $G$ which is a contradiction.
\end{proof}

\begin{claim}\label{claim3 sponsor lemma1}
For $k\geq 1$ and $j\geq 2$, if there exist $1\leq i\leq k$ and $1\leq  j\neq j'\leq l$ such that $\rho^*_H(vw_{j'})\geq 7$ and $\rho^*_H(v_iw_j)\geq 7$, then $\rho^*_H(vw_{j'}v_iw_j)\leq 13$.
\end{claim}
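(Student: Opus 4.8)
The plan is to argue exactly as in the proof of \Cref{claim2 sponsor lemma1}, now with two edges playing the role of ``guides''. Suppose for contradiction that $\rho^*_H(vw_{j'})\geq 7$, $\rho^*_H(v_iw_j)\geq 7$ and, \emph{in addition}, $\rho^*_H(vw_{j'}v_iw_j)\geq 14$. After relabelling I may assume $j'=1$, $j=2$, $i=1$ (here $l\geq2$ since $j\neq j'$, and $k\geq1$); note that $v\neq w_{j'}$ and $v_i\neq w_j$ because $d(v)=7$ while $w_{j'},w_j$ are $3$-vertices, and $u$ is distinct from $v,v_i,w_{j'},w_j,w'_\bullet,w''_\bullet$ by \Cref{3-path lemma}, \Cref{2-path lemma} and \Cref{2-path lemma1}. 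First I add the edge $e=vw_{j'}$ to $H$: since $\rho^*_H(vw_{j'})\geq7$, \Cref{potlemma} (with $k=0$) gives $\mad(H+e)\leq\frac{18}{7}$. To also add $e'=v_iw_j$, I need $\rho^*_{H+e}(v_iw_j)\geq7$, and this is exactly where the hypothesis $\rho^*_H(vw_{j'}v_iw_j)\geq14$ is used: by \Cref{potlemma2} applied to $H\mapsto H+e$ (so $7-2k=7$), either $\rho^*_{H+e}(v_iw_j)=\rho^*_H(v_iw_j)\geq7$, or $\rho^*_{H+e}(v_iw_j)\geq\rho^*_H(v_iw_j\cup\{v,w_{j'}\})-7=\rho^*_H(vw_{j'}v_iw_j)-7\geq7$. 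Either way \Cref{potlemma} yields $\mad(H+e+e')\leq\frac{18}{7}$, and since $|V(H+e+e')|+|E(H+e+e')|<|V(G)|+|E(G)|$, minimality of $G$ provides a $2$-distance $(\Delta+1)$-coloring $\psi$ of $H+e+e'$.

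I then extend $\psi$ to a coloring $\phi$ of $G$, keeping $\phi(x)=\psi(x)$ for $x\in V(H)\setminus\{w_m,r_m,s_m\mid 1\le m\le l\}$ and colouring the rest. The two added edges license two forced colour choices. Setting $\phi(p_3):=\psi(w_{j'})$ is valid for $p_3$: since $e$ makes $w_{j'}$ adjacent to $v$ in $H+e$, the colour $\psi(w_{j'})$ already avoids $\psi(v)$ and all colours on $N_G(v)\setminus\{p_3\}$ (those being at distance $2$ from $w_{j'}$); keeping $\phi(w_{j'})=\psi(w_{j'})$ is fine since the only $2$-distance neighbours of $w_{j'}$ not already coloured lie in $N_G(u)\cup\{u\}$ and will be coloured later so as to avoid $\phi(w_{j'})$. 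Likewise, setting $\phi(q'_i):=\psi(w_j)$ is valid for $q'_i$: since $e'$ makes $w_j$ adjacent to $v_i$, the colour $\psi(w_j)$ avoids $\psi(v_i)$ and all colours on $N_G(v_i)\setminus\{q'_i\}$. The first choice forces $\phi(p_3)=\phi(w_{j'})$, which — as in \Cref{claim2 sponsor lemma1} — leaves $p_1$ a free colour at the very end (it then sees only $7$ distinct colours on its $8$ coloured $2$-distance neighbours), and it also defeats the second obstruction of \Cref{sponsor lemma1}, since $\phi(p_3)=\phi(w_{j'})\notin\{\phi(r'_{j'}),\phi(s'_{j'})\}$ ($w_{j'}$ being at distance $2$ from $r'_{j'},s'_{j'}$). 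The second choice defeats the first obstruction: $\phi(q'_i)=\psi(w_j)\notin\{\psi(r'_j),\psi(s'_j)\}=\{\phi(r'_j),\phi(s'_j)\}$, so $q_i$ and $w_j$ do not see the same pair of colours, which is precisely what breaks the ``all neighbours of $u$ see the same two colours'' configuration.

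Concretely I would colour in the following order: the $q'_m$ with $m\neq i$ (each sees at most $5$ colours, as $d(v_m)\le5$); then $\phi(p_3),\phi(w_{j'}),\phi(q'_i)$ as above; then set aside the $2$-vertices $p_2,r_m,s_m$ (all of $2$-distance degree $\le5$) to be coloured last; then $u$ (which at that point sees only $\phi(q'_1),\dots,\phi(q'_k),\phi(w_{j'})$, at most $k+1$ colours); then the $w_m$ with $m\notin\{j',j\}$ and the $q_m$ with $m\neq i$; then $q_i$ and $w_j$ together via \Cref{Hall}, using that they see the same $5$ colours on $\{u\}\cup(N_G(u)\setminus\{p_1,q_i,w_j\})$, so the only obstruction would be that both have the same unique available colour, which would force $\{\phi(q'_i),\phi(v_i)\}=\{\phi(r'_j),\phi(s'_j)\}$ — already excluded; then $p_1$; and finally $p_2$ and all $r_m,s_m$. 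A routine check shows every vertex sees at most $7$ colours when it is coloured, so $\phi$ is a valid $2$-distance $(\Delta+1)$-coloring of $G$, contradicting minimality; hence $\rho^*_H(vw_{j'}v_iw_j)\leq13$.

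The potential bookkeeping (adding $e$ and $e'$) is short once \Cref{potlemma} and \Cref{potlemma2} are available. The main obstacle is the colour-extension step: one must verify that the two guide edges genuinely license the forced assignments $\phi(p_3)=\psi(w_{j'})$ and $\phi(q'_i)=\psi(w_j)$, that the prescribed order keeps every vertex colourable, and — above all — that the two obstructions isolated in \Cref{sponsor lemma1} are exactly the two that $e$ and $e'$ eliminate, so that the Hall argument on $N_G(u)\cup\{u\}$ and the final freeing of $p_1$ both go through.
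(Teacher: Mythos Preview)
Your argument is correct and follows the same route as the paper: add the two edges $e=vw_{j'}$ and $e'=v_iw_j$, invoke \Cref{potlemma} and \Cref{potlemma2} exactly as you do, and extend the resulting coloring $\psi$ by setting $\phi(p_3)=\psi(w_{j'})$ and $\phi(q'_i)=\psi(w_j)$. The only real difference is cosmetic: the paper also fixes $\phi(w_j)=\psi(w_j)$, so that $q_i$ sees the repeated color $\phi(q'_i)=\phi(w_j)$ and is colored directly, whereas you leave $w_j$ free and finish $\{q_i,w_j\}$ with a two-vertex Hall argument; both variants work. One small slip: in this lemma there is no hypothesis that $d(v_m)\le 5$ (that appears only in \Cref{sponsor lemma2}), so your bound ``each $q'_m$ sees at most $5$ colours'' is unjustified---but each $q'_m$ still sees at most $d(v_m)\le 7$ colours at that stage, so the greedy step goes through regardless.
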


\begin{proof}

Suppose by contradiction that, w.l.o.g., $\rho^*_H(vw_2)\geq 7$, $\rho^*_H(v_1w_1)\geq 7$, and $\rho^*_H(vw_2v_1w_1)\geq 14$. We add the edge $e=vw_2$ in $H$ and let $H+e$ be the resulting graph. Since $\rho^*_H(vw_2)\geq 7$, we get $\mad(H+e)\leq \frac{18}{7}$ by \Cref{potlemma}. We have $\rho^*_{H+e}(v_1w_1)\geq 7$, otherwise, by \Cref{potlemma2}, we get  $6\geq \rho^*_{H+e}(v_1w_1)=\rho^*_H(v_1w_1)\geq 7$ or $14\leq \rho^*_H(vw_2v_1w_1)\leq 6+7=13$ which are both contradictions. Now, we add the edge $e'=v_1w_1$ in $H+e$. So, by \Cref{potlemma}, we have $\mad(H+e+e')\leq \frac{18}{7}$. By minimality of $G$, there exists a coloring $\psi$ of $H+e+e'$. We define $\phi$ a coloring of $G$ as follows:

\begin{itemize}
\item If $x\in V(H)\setminus\{w_j,r_j,s_j|1\leq j\leq l\}$, then $\phi(x)=\psi(x)$.
\item We color $q'_i$ for all $2\leq i\leq k$ since they all have at least one available color each.
\item Let $\phi(p_3) = \phi(w_2) = \psi(w_2)$ and $\phi(w_1) = \phi(q'_1) = \psi(q'_1)$.
\item Note that $d^*_G(p_2)=4$ and $d^*_G(r_j)=d^*_G(s_j)=5$ for all $1\leq j\leq l$ so we can always color them last.
\item We color $u$ who has at least two available colors.
\item Then, we color $w_3,\dots,w_l$ and $q_2,\dots,q_k$ since there are three of them and each one has three available colors left.
\item Now, we color $q_1$ which sees eight colored vertices but two of them, namely $w_1$ and $q'_1$ have the same color.
\item Similarly, we can color $p_1$ since it sees eight colored vertices but two of them, namely $w_2$ and $p_3$ have the same color.
\end{itemize}

We obtain a valid coloring of $G$ which is a contradiction.
\end{proof}

\begin{claim}\label{claim4 sponsor lemma1}
For $k\geq 3$, if there exist three distinct integers $1\leq i,i',i''\leq k$, $\rho^*_H(vv_i'')\geq 3$ and $\rho^*_H(v_iv_{i'})\geq 3$, then $\rho^*_H(vv_iv_{i'}v_{i''})\leq 5$.
\end{claim}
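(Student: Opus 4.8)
The plan is to imitate \Cref{claim2 sponsor lemma1} and \Cref{claim3 sponsor lemma1}: argue by contradiction, add two short paths to $H$ that encode the two obstructions to \Cref{Hall} arising in the main argument of \Cref{sponsor lemma1}, colour the resulting strictly smaller graph by minimality of $G$, and extend the colouring back to $G$.

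So suppose for contradiction that $\rho^*_H(vv_iv_{i'}v_{i''})\geq 6$, and relabel the indices so that $(i,i',i'')=(1,2,3)$; the hypotheses then read $\rho^*_H(v_1v_2)\geq 3$ and $\rho^*_H(vv_3)\geq 3$. I would first add a $2$-path $P'=v_1p'_1p'_2v_2$: since $\rho^*_H(v_1v_2)\geq 3=7-2\cdot 2$, \Cref{potlemma} with $k=2$ gives $\mad(H+P')\leq\frac{18}{7}$. I would then check that $\rho^*_{H+P'}(vv_3)\geq 3$: by \Cref{potlemma2}, either it equals $\rho^*_H(vv_3)\geq 3$, or $\rho^*_{H+P'}(vv_3)\geq\rho^*_H(vv_3\cup\{v_1,v_2\})-3=\rho^*_H(vv_1v_2v_3)-3\geq 3$ --- this is the only place the assumption $\rho^*_H(vv_1v_2v_3)\geq 6$ enters. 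Now I add a $2$-path $P''=vp''_1p''_2v_3$ in $H+P'$, again legal by \Cref{potlemma}. Since $v$ has degree $6$ in $H$ and $7$ in $H+P''$ while $v_1,v_2,v_3$ keep degree at most $5$, the graph $H+P'+P''$ still has maximum degree $7$; and it is obtained from $G$ by deleting $\{u,p_1,p_2,p_3\}\cup\{q_i,q'_i\mid 1\leq i\leq k\}$, that is $4+2k\geq 10$ vertices, and adding only the $4$ vertices of $P'\cup P''$, so $|V|+|E|$ strictly decreases. Hence $H+P'+P''$ has a $2$-distance $8$-colouring $\psi$ by minimality of $G$.

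It then remains to extend $\psi$ to a $2$-distance $8$-colouring $\phi$ of $G$, contradicting the choice of $G$. I would keep $\phi=\psi$ on $V(H)\setminus\{w_j,r_j,s_j\mid 1\leq j\leq l\}$; transfer colours along the new paths by $\phi(q'_1)=\psi(p'_1)$, $\phi(q'_2)=\psi(p'_2)$, $\phi(p_3)=\psi(p''_1)$, $\phi(q'_3)=\psi(p''_2)$ (legal, because outside the internal vertices of $P''$ the $2$-distance neighbourhoods of $p_3$ and $q'_3$ in $G$ coincide with those of $p''_1$ and $p''_2$ in $H+P''$); colour $q'_4,\dots,q'_k$, each of which still has an available colour; and leave $p_2$ and all $r_j,s_j$ (each with $d^*_G\leq 5$) to be coloured at the very end. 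The only uncoloured vertices are then those of $N_G(u)\cup\{u\}$, with $|L(u)|\geq 8-k$, $|L(p_1)|\geq 7$ and $|L(q_i)|,|L(w_j)|\geq 6$, so exactly as in \Cref{sponsor lemma1} \Cref{Hall} fails only if either (a)~all of $q_1,\dots,q_k,w_1,\dots,w_l$ see the same pair of colours, or (b)~all of them see $\phi(p_3)$. Case (a) is impossible because $q_1$ sees $\{\psi(p'_1),\psi(v_1)\}$ and $q_2$ sees $\{\psi(p'_2),\psi(v_2)\}$, which are distinct sets since $\psi(p'_1)\notin\{\psi(p'_2),\psi(v_2)\}$ ($p'_1$ is within distance $2$ of both $p'_2$ and $v_2$ in $H+P'$). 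Case (b) is impossible because $q_3$ does not see $\phi(p_3)=\psi(p''_1)$: indeed $\psi(p''_1)\neq\psi(p''_2)=\phi(q'_3)$ and $\psi(p''_1)\neq\psi(v_3)=\phi(v_3)$ ($p''_1$ is within distance $2$ of both in $H+P''$), so $\phi(p_3)\notin\{\phi(q'_3),\phi(v_3)\}$. Thus $N_G(u)\cup\{u\}$ is colourable, the set-aside vertices follow greedily, and $\phi$ is a valid $2$-distance $8$-colouring of $G$: contradiction.

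The potential bookkeeping (two uses of \Cref{potlemma}, one of \Cref{potlemma2}) is routine given the lemmas of \Cref{sec2}. I expect the main obstacle to be the extension step: fixing a sound colouring order --- copy of $\psi$; then the four transferred colours and $q'_4,\dots,q'_k$; then $N_G(u)\cup\{u\}$; then $p_2$ and the $r_j,s_j$ --- and verifying that, at the precise moment \Cref{Hall} is invoked, the already-coloured $2$-distance neighbours of $u$, $p_1$, each $q_i$, and each $w_j$ are exactly the ones giving the list lengths $8-k$, $7$, $6$, $6$, so that (a) and (b) really are the only two failure modes.
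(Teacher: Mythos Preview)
Your proof is correct and follows essentially the same route as the paper's: assume $\rho^*_H(vv_1v_2v_3)\geq 6$, add the two $2$-paths $P'=v_1p'_1p'_2v_2$ and $P''=vp''_1p''_2v_3$ (using \Cref{potlemma} and \Cref{potlemma2} exactly as you describe), colour the smaller graph, transfer $\psi(p'_1),\psi(p'_2),\psi(p''_1),\psi(p''_2)$ to $q'_1,q'_2,p_3,q'_3$, and rule out the two Hall failure modes via $|L(q_1)\cup L(q_2)|\geq 7$ and $|L(p_1)\cup L(q_3)|\geq 8$. One harmless slip: you assert $v_1,v_2,v_3$ have degree at most $5$, which is not guaranteed in the setting of \Cref{sponsor lemma1}; however, since each $v_i$ loses the neighbour $q'_i$ in $H$ and regains exactly one neighbour from $P'$ or $P''$, the maximum degree of $H+P'+P''$ is still at most $7$, so your conclusion stands.
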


\begin{proof}

Suppose by contradiction that, w.l.o.g., $i=1,i'=2,i''=3$. In other words, $\rho^*_H(vv_3)\geq 3$, $\rho^*_H(v_1v_2)\geq 3$, and $\rho^*_H(vv_1v_2v_3)\geq 6$. We add the 2-path $v_1p'_1p'_2v_2$ in $H$ and let $P'=\{p'_1,p'_2\}$. Since $\rho^*_H(v_1v_2)\geq 3$, we get $\mad(H+P')\leq \frac{18}{7}$ by \Cref{potlemma}. We have $\rho^*_{H+P'}(vv_3)\geq 3$, otherwise, by \Cref{potlemma2}, we get  $2\geq \rho^*_{H+P'}(vv_3)=\rho^*_H(vv_3)\geq 3$ or $6\leq \rho^*_H(v_1v_2vv_3)\leq 2+3=5$ which are both contradictions. Now, we add the 2-path $vp''_1p''_2v_3$ in $H+P'$ and let $P''=\{p''_1,p''_2\}$. So, by \Cref{potlemma}, we have $\mad(H+P'+P'')\leq \frac{18}{7}$. By minimality of $G$, there exists a coloring $\psi$ of $H+P'+P''$. We define $\phi$ a coloring of $G$ as follows:

\begin{itemize}
\item If $x\in V(H)\setminus\{w_j,r_j,s_j|1\leq j\leq l\}$, then $\phi(x)=\psi(x)$.
\item We color $q'_i$ for all $4\leq i\leq k$ since they all have at least one available color each.
\item Let $\phi(p_3) = \psi(p''_1)$, $\phi(q'_3) = \psi(p''_2)$, $\phi(q'_1)=\psi(p'_1)$, and $\phi(q'_2)=\psi(p'_2)$.
\item Note that $d^*_G(p_2)=4$ and $d^*_G(r_j)=d^*_G(s_j)=5$ for all $1\leq j\leq l$ so we can always color them last.
\item Let $L(x)$ be the list of available colors left for a vertex $x$. Observe that we have $|L(u)|\geq 2$, $|L(w_1)|,\dots,|L(w_l)|,|L(q_1)|,\dots,|L(q_k)|\geq 6$ and $|L(p_1)|\geq 7$. By \Cref{Hall}, the only two ways these eight vertices are not colorable is the following:
\begin{itemize}
\item We have seven vertices $u,w_1,\dots,w_l,q_1,\dots,q_l$ but $|L(u)\cup L(w_1)\cup\dots\cup L(w_l)\cup L(q_1)\cup\dots\cup L(q_l)|\leq 6$. However, this is not possible since $q_1$ sees $\{\phi(q'_1),\phi(v_1)\}=\{\psi(v_1),\psi(p'_1)\}\neq\{\psi(p'_2),\psi(v_2)\}=\{\phi(q'_2),\phi(v_2)\}$ which $q_2$ sees. So, $|L(q_1)\cup L(q_2)|\geq 7$.
\item We have $|L(u)\cup L(p_1)\cup L(w_1)\cup\dots\cup L(w_l)\cup L(q_1)\cup\dots\cup L(q_l)|\leq 7$. However, this is not possible since $p_1$ sees $\phi(p_3)=\psi(p''_1)\notin\{\psi(p''_2),\psi(v_3)\}=\{\phi(q'_3),\phi(v_3)\}$ which $q_3$ sees. So, $|L(p_1)\cup L(q_3)|\geq 8$.
\end{itemize}
\end{itemize}

We obtain a valid coloring of $G$ which is a contradiction.
\end{proof}

\begin{claim}\label{claim5 sponsor lemma1}
For $k\geq 2$ and $j\geq 1$, if there exists $1\leq i \neq i'\leq k$ and $1\leq j\leq l$ such that $\rho^*_H(vv_{i'})\geq 3$ and $\rho^*_H(v_iw_j)\geq 7$, then $\rho^*_H(vv_{i'}v_iw_j)\leq 9$.
\end{claim}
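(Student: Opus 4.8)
The plan is to reuse the template of \Cref{claim2 sponsor lemma1,claim3 sponsor lemma1,claim4 sponsor lemma1}: assume the conclusion fails, graft two small gadgets onto $H$, colour the enlarged graph by minimality, then push the colouring back to $G$ and finish $N_G(u)\cup\{u\}$ with \Cref{Hall}. Suppose for contradiction that, after relabelling, $\rho^*_H(vv_2)\geq 3$, $\rho^*_H(v_1w_1)\geq 7$ and $\rho^*_H(vv_2v_1w_1)\geq 10$. First I would add to $H$ a $2$-path $P'=vp'_1p'_2v_2$; since $\rho^*_H(vv_2)\geq 3=7-2\cdot2$, \Cref{potlemma} with $k=2$ gives $\mad(H+P')\leq\frac{18}{7}$. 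Then I would check that the potential of $\{v_1,w_1\}$ survives: if $\rho^*_{H+P'}(v_1w_1)\leq 6$, then \Cref{potlemma2} applied to $A=\{v_1,w_1\}$ (the added path has endpoints $v,v_2$) gives either $6\geq\rho^*_{H+P'}(v_1w_1)=\rho^*_H(v_1w_1)\geq 7$, or $10\leq\rho^*_H(vv_2v_1w_1)\leq\rho^*_{H+P'}(v_1w_1)+3\leq 9$, both absurd; hence $\rho^*_{H+P'}(v_1w_1)\geq 7$. Now I would add the edge $e=v_1w_1$, and \Cref{potlemma} with $k=0$ gives $\mad(H+P'+e)\leq\frac{18}{7}$. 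As $H$ arises from $G$ by deleting $u,p_1,p_2,p_3$ and the $2k\geq 4$ vertices $q_i,q'_i$ while $P'$ adds only $p'_1,p'_2$, we get $|V(H+P'+e)|+|E(H+P'+e)|<|V(G)|+|E(G)|$; and since $\Delta(H+P'+e)=7$ (indeed $d_H(v)=6$ and $P'$ restores $d(v)$ to $7$, while no added edge or path raises another degree above $7$), minimality of $G$ produces a $2$-distance $8$-colouring $\psi$ of $H+P'+e$.

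I would then transfer $\psi$ to a colouring $\phi$ of $G$. Put $\phi(x)=\psi(x)$ for every $x\in V(H)\setminus\{w_j,r_j,s_j\mid 1\leq j\leq l\}$, so $v,v_1,\dots,v_k$ and all $r'_j,s'_j$ keep their colours. Put $\phi(p_3)=\psi(p'_1)$ and $\phi(q'_2)=\psi(p'_2)$: legal, since $p'_1$ (resp. $p'_2$) is adjacent to $v$ (resp. $v_2$) in $H+P'$, so $\psi(p'_1)$ avoids $\psi(v)$ and all colours on $v$'s other neighbours — exactly the constraints on $p_3$ in $G$ — and symmetrically for $\psi(p'_2)$ and $q'_2$. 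Put $\phi(q'_1)=\psi(w_1)$: legal, since $e$ makes $w_1$ adjacent to $v_1$ in $H+P'+e$, so $\psi(w_1)$ avoids $\psi(v_1)$ and all colours on $v_1$'s other neighbours — exactly the constraints on $q'_1$ in $G$. For $3\leq i\leq k$ colour $q'_i$ (it has at least $8-d_G(v_i)\geq1$ free colours), and defer the $2$-vertices $p_2,r_j,s_j$, each with $d^*_G\leq 7$, to the very end.

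The crux is to colour $S=\{u,p_1\}\cup\{q_1,\dots,q_k\}\cup\{w_1,\dots,w_l\}$ — an eight-vertex set, pairwise at distance $\leq 2$ through $u$ — via \Cref{Hall}. At this point $|L(u)|\geq 8-k\geq 2$ ($u$ sees only $\phi(q'_1),\dots,\phi(q'_k)$), $|L(p_1)|=7$ ($p_1$ sees only $\phi(p_3)=\psi(p'_1)$), $|L(q_i)|\geq 6$ ($q_i$ sees only $\phi(q'_i),\phi(v_i)$), and $|L(w_j)|\geq 6$ ($w_j$ sees only $\phi(r'_j),\phi(s'_j)$). By \Cref{Hall} a failure could come only from one of two subsets. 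A violating subset containing $p_1$ must be $S$ itself with every list inside $L(p_1)=\{1,\dots,8\}\setminus\{\psi(p'_1)\}$; but $p'_1p'_2v_2$ is a path, so $\psi(p'_1)\notin\{\psi(p'_2),\psi(v_2)\}=\{\phi(q'_2),\phi(v_2)\}$, whence $\psi(p'_1)\in L(q_2)$ and $L(q_2)\not\subseteq L(p_1)$ — impossible. A violating subset missing $p_1$ must be $\{u,q_1,\dots,q_k,w_1,\dots,w_l\}$ with all lists equal to one common $6$-set, in particular $L(q_1)=L(w_1)$; but $w_1$ is at distance $2$ from both $r'_1$ and $s'_1$, so $\psi(w_1)\notin\{\psi(r'_1),\psi(s'_1)\}$, while $e$ forces $\psi(w_1)\neq\psi(v_1)$, so $L(q_1)=\{1,\dots,8\}\setminus\{\psi(w_1),\psi(v_1)\}$ has size $6$ and differs from $L(w_1)=\{1,\dots,8\}\setminus\{\psi(r'_1),\psi(s'_1)\}$ — impossible. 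Thus $S$ is colourable, and finishing with $p_2,r_j,s_j$ yields a $2$-distance $8$-colouring of $G$, contradicting the choice of $G$.

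The main obstacle will be exactly this transfer-and-Hall bookkeeping: verifying that the two gadgets do complementary work — $P'$ producing the separation $\psi(p'_1)\notin\{\psi(p'_2),\psi(v_2)\}$ that rules out the ``$p_1$-saturated'' failure of \Cref{Hall}, and $e$ producing both $\psi(w_1)\notin\{\psi(r'_1),\psi(s'_1)\}$ and $\psi(w_1)\neq\psi(v_1)$ (hence $|L(q_1)|=6$) that rule out the ``balanced $6$-set'' failure — and that the forced precolours on $p_3,q'_1,q'_2$ remain consistent with the deferred $2$-vertices. The two potential inequalities and the size/degree checks needed for minimality are routine via \Cref{potlemma,potlemma2}.
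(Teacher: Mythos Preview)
Your proof is correct and follows essentially the same approach as the paper: add the $2$-path $vp'_1p'_2v_2$ and the edge $v_1w_1$ to $H$, invoke minimality, transfer $\phi(p_3)=\psi(p'_1)$, $\phi(q'_2)=\psi(p'_2)$, $\phi(q'_1)=\psi(w_1)$, and finish with \Cref{Hall}. The only cosmetic difference is that the paper also precolors $\phi(w_1)=\psi(w_1)$ (so it runs Hall on seven vertices with lists of size $\geq 5$ and exploits $|L(q_1)|\geq 6$ from the repeated colour $\phi(q'_1)=\phi(w_1)$), whereas you leave $w_1$ among the eight Hall vertices and instead argue $L(q_1)\neq L(w_1)$ directly; both versions work.
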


\begin{proof}
Suppose by contradiction that, w.l.o.g., $\rho^*_H(vv_2)\geq 3$, $\rho^*_H(v_1w_1)\geq 7$, and $\rho^*_H(vv_2v_1w_1)\geq 10$. We add the 2-path $vp'_1p'_2v_2$ in $H$ and let $P'=\{p'_1,p'_2\}$. Since $\rho^*_H(vv_2)\geq 3$, we get $\mad(H+P')\leq \frac{18}{7}$ by \Cref{potlemma}. We have $\rho^*_{H+P'}(v_1w_1)\geq 7$, otherwise, by \Cref{potlemma2}, we get  $6\geq \rho^*_{H+P'}(vw_1)=\rho^*_H(v_1w_1)\geq 7$ or $10\leq \rho^*_H(vv_2v_1w_1)\leq 6+3=9$ which are both contradictions. Now, we add the edge $e=v_1w_1$ in $H+P'$. So, by \Cref{potlemma}, we have $\mad(H+P'+e)\leq \frac{18}{7}$. By minimality of $G$, there exists a coloring $\psi$ of $H+P'+e$. We define $\phi$ a coloring of $G$ as follows:

\begin{itemize}
\item If $x\in V(H)\setminus\{w_j,r_j,s_j|1\leq j\leq l\}$, then $\phi(x)=\psi(x)$.
\item We color $q'_i$ for all $3\leq i\leq k$ since they all have at least one available color each.
\item Let $\phi(p_3) = \psi(p'_1)$, $\phi(q'_2) = \psi(p'_2)$ and $\phi(q'_1) = \phi(w_1) = \psi(w_1)$.
\item Note that $d^*_G(p_2)=4$ and $d^*_G(r_j)=d^*_G(s_j)=5$ for all $1\leq j\leq l$ so we can always color them last.
\item Let $L(x)$ be the list of available colors left for a vertex $x$. Observe that we have $|L(u)|\geq 2$, $|L(w_2)|,\dots,|L(w_l)|,|L(q_1)|,\dots,|L(q_k)|\geq 5$, and $|L(p_1)|\geq 6$. By \Cref{Hall}, the only two ways these seven vertices are not colorable is the following:
\begin{itemize}
\item We have six vertices $u,w_2,\dots,w_l,q_1,\dots,q_l$ but $|L(u)\cup L(w_1)\cup\dots\cup L(w_l)\cup L(q_1)\cup\dots\cup L(q_l)|\leq 5$. However, this is not possible since $q_1$ sees three colored vertices in $v_1$, $q'_1$ and $w_1$ but $\phi(q'_1)=\phi(w_1)$ so, $|L(q_1)|\geq 6$.
\item We have $|L(u)\cup L(p_1)\cup L(w_1)\cup\dots\cup L(w_l)\cup L(q_1)\cup\dots\cup L(q_l)|\leq 6$. However, this is not possible since $p_1$ sees $\phi(p_3)=\psi(p'_1)\notin\{\psi(p'_2),\psi(v_2)\}=\{\phi(q'_2),\phi(v_2)\}$ which $q_2$ sees. So, $|L(p_1)\cup L(q_2)|\geq 7$.
\end{itemize}
\end{itemize}

We obtain a valid coloring of $G$ which is a contradiction.
\end{proof}

Given \Cref{claim2 sponsor lemma1,claim3 sponsor lemma1,claim4 sponsor lemma1,claim5 sponsor lemma1}, we can show upper bounds on the potential on some subsets of vertices of $G$. However, due to \Cref{potlowerbound}, the lower bounds on the potential of these subsets exceed the upper bounds, which is a contradiction.

First, recall that $H=G-(\{u,p_1,p_2,p_3\}\cup\{q_i,q'_i|1\leq i\leq k\})$ and observe that:
\begin{observation}\label{obs sponsor lemma1}
For $0\leq i\leq k$ and $0\leq j\leq l$, by applying \Cref{potlowerbound} to the graph $G-\{up_1,uq_{i+1},\dots,uq_{k}$, $uw_{j+1},\dots,uw_l\}$ with $A=\{u\}\cup\{q_x,q'_x|1\leq x\leq i\}$ and $S=\{v_1,\dots,v_i,w_1,\dots,w_j\}$,  we have $\rho^*_H(S)\geq 3i+7j-9$. Similarly, by applying \Cref{potlowerbound} to the graph $G-\{uq_{i+1},\dots,uq_{k},uw_{j+1},\dots,uw_l\}$ with $A=\{u,p_1,p_2,p_3\}\cup\{q_x,q'_x|1\leq x\leq i\}$ and $S=\{v,v_1,\dots,v_i,w_1,\dots,w_j\}$, $\rho^*_H(S)\geq 3i+7j-8$.
\end{observation}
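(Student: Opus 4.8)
The plan is to establish both inequalities by a single application of \Cref{potlowerbound} to each of the two auxiliary graphs named in the statement, followed by one invocation of \Cref{potsubgraph} to pull the resulting bound down to $H$. No new combinatorial idea is needed beyond the pairwise distinctness of $u,v,v_1,\dots,v_k,w_1,\dots,w_l$ and the $w'_\bullet,w''_\bullet$ already recorded at the start of the proof of \Cref{sponsor lemma1}; the whole observation is a bookkeeping step that packages the potential estimates used in the endgame of the lemma.

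For the first inequality I would set $G_1=G-\{up_1,uq_{i+1},\dots,uq_k,uw_{j+1},\dots,uw_l\}$, so that the neighbours of $u$ in $G_1$ are exactly $q_1,\dots,q_i,w_1,\dots,w_j$, and put $A=\{u\}\cup\{q_x,q'_x\mid 1\le x\le i\}$ and $S=\{v_1,\dots,v_i,w_1,\dots,w_j\}$. The sets $A$ and $S$ are disjoint, and $S$ contains every vertex outside $A$ adjacent in $G_1$ to a vertex of $A$: each $q_x$ has both neighbours $u,q'_x$ in $A$, each $q'_x$ has its unique neighbour outside $A$ equal to $v_x\in S$, and the neighbours of $u$ outside $A$ are precisely $w_1,\dots,w_j\in S$. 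Since $G_1$ is a subgraph of $G$ we have $\mad(G_1)\le\frac{18}{7}$, so \Cref{potlowerbound} applied in $G_1$ gives $\rho^*_{G_1-A}(S)\ge 7|E_{G_1}(A,S)|-\rho_{G_1}(A)$. Here $E_{G_1}(A,S)$ contains the $i$ edges $q'_xv_x$ and the $j$ edges $uw_x$, so $|E_{G_1}(A,S)|\ge i+j$; and since $E(G_1[A])$ contains the $2i$ edges $uq_x$ and $q_xq'_x$ while $|A|=2i+1$, we get $\rho_{G_1}(A)\le 9(2i+1)-7\cdot 2i=4i+9$. Hence $\rho^*_{G_1-A}(S)\ge 7(i+j)-(4i+9)=3i+7j-9$. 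Finally $G_1-A$ and $G-A$ are the same graph, since the edges deleted to form $G_1$ are all incident to $u\in A$, and $H$ is a subgraph of $G-A$ containing $S$, so \Cref{potsubgraph} yields $\rho^*_H(S)\ge\rho^*_{G-A}(S)=\rho^*_{G_1-A}(S)\ge 3i+7j-9$.

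The second inequality is proved the same way. I would take $G_2=G-\{uq_{i+1},\dots,uq_k,uw_{j+1},\dots,uw_l\}$ (this time the edge $up_1$ is kept), $A=\{u,p_1,p_2,p_3\}\cup\{q_x,q'_x\mid 1\le x\le i\}$ and $S=\{v,v_1,\dots,v_i,w_1,\dots,w_j\}$; again $A\cap S=\emptyset$ and $S$ captures every neighbour of $A$ lying outside $A$ (along the $3$-path only $v$ escapes $A$, each $q'_x$ contributes $v_x$, and the outside-neighbours of $u$ are exactly $w_1,\dots,w_j$). Applying \Cref{potlowerbound} in $G_2$ gives $\rho^*_{G_2-A}(S)\ge 7|E_{G_2}(A,S)|-\rho_{G_2}(A)$, where $E_{G_2}(A,S)$ contains the edge $p_3v$, the $i$ edges $q'_xv_x$ and the $j$ edges $uw_x$, so $|E_{G_2}(A,S)|\ge 1+i+j$, while $E(G_2[A])$ contains $up_1,p_1p_2,p_2p_3$ together with the $2i$ edges $uq_x,q_xq'_x$ and $|A|=2i+4$, so $\rho_{G_2}(A)\le 9(2i+4)-7(2i+3)=4i+15$. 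Therefore $\rho^*_{G_2-A}(S)\ge 7(1+i+j)-(4i+15)=3i+7j-8$, and since $G_2-A=G-A$ and $H$ is a subgraph of $G-A$ containing $S$, \Cref{potsubgraph} gives $\rho^*_H(S)\ge 3i+7j-8$.

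The computation is routine; the only points requiring care are that after deleting the listed edges the vertex $u$ genuinely has no neighbour outside $A\cup S$ (this is exactly why those particular edges are removed, so that the hypothesis of \Cref{potlowerbound} on $S$ is met), and that the inequalities run in the right direction — we only ever need the listed edges to be present, i.e. lower bounds on $|E(A,S)|$ and on $|E(G[A])|$, and any further edges among these vertices would only strengthen the bound. I do not expect a genuine obstacle here.
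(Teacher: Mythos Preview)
Your argument is correct and follows exactly the route the paper takes: apply \Cref{potlowerbound} in the edge-deleted graph specified by the statement, do the arithmetic $7(i+j)-(9(2i+1)-14i)=3i+7j-9$ (respectively $7(1+i+j)-(9(2i+4)-7(2i+3))=3i+7j-8$), and pass to $H$ via \Cref{potsubgraph}. The paper's own justification is just the bare first calculation; your version spells out the verification that $S$ catches all outside-neighbours of $A$, the identity $G_1-A=G-A$, and the monotonicity step down to $H$, all of which the paper leaves implicit.
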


Indeed, by \Cref{potlowerbound}, we obtain the first inequality through the following calculations: 
\begin{align*}
\rho^*_H(S) & \geq 7(i+j) - (9(2i + 1) - 7\cdot 2i)\\
& \geq 7i + 7j - (18i + 9 - 14i)\\
& \geq 7i + 7j - 18i - 9 + 14i\\
& \geq 3i + 7j - 9
\end{align*}

Similarly, we can obtain the second equation through the same kind of calculations.

Now, we consider the different values of $2\leq k\leq 6$.

\textbf{For $\mathbf{k=2}$:}
\begin{itemize}
\item Suppose that $\rho^*_H(vw_j)\leq 6$ for all $1\leq j\leq 4$.\\
As a result, for all $1\leq i\leq 2$, $\rho^*_H(vw_1)+\dots+\rho^*_H(vw_4)+\rho^*_H(vv_i)\leq 6\cdot 4 + \rho^*_H(vv_i) = 24 + \rho^*_H(vv_i)$. We also have $\rho^*_H(vw_1)+\dots+\rho^*_H(vw_4)+\rho^*_H(vv_i)\geq \rho^*_H(vv_iw_1\dots w_4) + 4\rho^*_H(v)\geq 3\cdot 1 +7\cdot 4 - 8 + 4\rho^*_H(v) \geq 23 + 4\cdot 1 = 27$ where the first inequality corresponds to \Cref{potadd}, the second one to \Cref{obs sponsor lemma1}, and the third one to \Cref{claim sponsor lemma1} (from now on, we will repeat the same scheme). So, $\rho^*_H(vv_i)\geq 27-24 = 3$ for all $1\leq i\leq 2$.

Suppose there exist $1\leq i\leq 2$ and $1\leq j \leq 4$ such that $\rho^*_H(v_iw_j)\geq 7$. Say w.l.o.g. that $\rho^*_H(v_2w_2)\geq 7$, then by \Cref{claim5 sponsor lemma1}, $\rho^*_H(vv_1v_2w_2)\leq 9$. Thus, we get $\rho^*_H(vv_1v_2w_2)+\rho^*_H(vw_1)+\rho^*_H(vw_3)+\rho^*_H(vw_4)\leq 9 + 3\cdot 6 = 27$. However, by \Cref{potadd}, \Cref{obs sponsor lemma1} then \Cref{claim sponsor lemma1}, we have $\rho^*_H(vv_1v_2w_2)+\rho^*_H(vw_1)+\rho^*_H(vw_3)+\rho^*_H(vw_4)\geq \rho^*_H(vv_1v_2w_1\dots w_4) + 3\rho^*_H(v)\geq 3\cdot 2 + 7\cdot 4 - 8 + 3\rho^*_H(v) \geq 26 + 3 = 29$.

So, for all $1\leq i\leq 2$ and $1\leq j \leq 4$, $\rho^*_H(v_iw_j)\leq 6$. Thus, we get $\rho^*_H(v_1w_1)+\rho^*_H(v_2w_2)+\rho^*_H(vw_3)+\rho^*_H(vw_4)\leq 4\cdot 6 = 24$. However, by \Cref{potadd}, \Cref{obs sponsor lemma1} then \Cref{claim sponsor lemma1}, we have $\rho^*_H(v_1w_1)+\rho^*_H(v_2w_2)+\rho^*_H(vw_3)+\rho^*_H(vw_4)\geq \rho^*_H(vv_1v_2w_1\dots w_4) + \rho^*_H(v)\geq 3\cdot 2 + 7\cdot 4 - 8 +\rho^*_H(v) \geq 26 + 1 = 27$.

\item Suppose w.l.o.g. that $\rho^*_H(vw_1)\geq 7$.
\begin{itemize}
\item Suppose that for all $1\leq i\leq 2$ and $2\leq j\leq 4$, we have $\rho^*_H(v_iw_j)\leq 6$.\\
As a result, $\rho^*_H(v_2w_2)+\rho^*_H(v_1w_3)+\rho^*_H(v_1w_4)+\rho^*_H(v_1w_1)\leq 3\cdot 6 + \rho^*_H(v_1w_1) = 18 + \rho^*_H(v_1w_1)$. Moreover, by \Cref{potadd} then \Cref{obs sponsor lemma1}, $\rho^*_H(v_2w_2)+\rho^*_H(v_1w_3)+\rho^*_H(v_1w_4)+\rho^*_H(v_1w_1)\geq \rho^*_H(v_1v_2w_1\dots w_4) + 2\rho^*_H(v_1)\geq 3\cdot 2 + 7\cdot 4 - 9 = 25$. So, we get $\rho^*_H(v_1w_1) \geq 25-18 = 7$.

Suppose there exists $2\leq j\leq 4$ such that $\rho^*_H(vw_j)\geq 7$. Say w.l.o.g. that $\rho^*_H(vw_2)\geq 7$, then by \Cref{claim3 sponsor lemma1}, $\rho^*_H(v_1w_1vw_2)\leq 13$. We get $\rho^*_H(v_1w_1vw_2) + \rho^*_H(v_2w_3)+\rho^*_H(v_2w_4)\leq 13 + 2\cdot 6 = 25$. However, by \Cref{potadd} then \Cref{obs sponsor lemma1}, $\rho^*_H(v_1w_1vw_2) + \rho^*_H(v_2w_3)+\rho^*_H(v_2w_4)\geq \rho^*_H(vv_1v_2w_1\dots w_4) + \rho^*_H(v_2)\geq 3\cdot 2 + 7\cdot 4 - 8 = 26$.

So, $\rho^*_H(vw_j)\leq 6$ for all $2\leq j\leq 4$. We get $\rho^*_H(vw_2) + \rho^*_H(v_1w_3)+\rho^*_H(v_2w_4)\leq 3\cdot 6 = 18$. However, by \Cref{potadd} then \Cref{obs sponsor lemma1}, $\rho^*_H(vw_2) + \rho^*_H(v_1w_3)+\rho^*_H(v_2w_4)\geq \rho^*_H(vv_1v_2w_2w_3w_4)\geq 3\cdot 2 + 7\cdot 3 - 8 = 19$.

\item Suppose that there exist $1\leq i\leq 2$ and $2\leq j\leq 4$ such that $\rho^*_H(v_iw_j)\geq 7$. Say w.l.o.g. $\rho^*_H(v_2w_2)\geq 7$.\\
By \Cref{claim3 sponsor lemma1}, $\rho^*_H(vw_1v_2w_2)\leq 13$. As a result, by \Cref{potadd} then \Cref{obs sponsor lemma1}, we get $\rho^*_H(vw_1v_2w_2) + \rho^*_H(v_1w_3)+\rho^*_H(v_1w_4)\geq \rho^*_H(vv_1v_2w_1\dots w_4) + \rho^*_H(v_1) \geq 3\cdot 2 + 7\cdot 4 - 8 = 26$. So, $\rho^*_H(v_1w_3)+\rho^*_H(v_1w_4)\geq 26-13=13$ and $\rho^*_H(v_1w_3)\geq 7$ w.l.o.g.

By \Cref{claim3 sponsor lemma1}, $\rho^*_H(vw_1v_1w_3)\leq 13$. As a result, by \Cref{potadd} then \Cref{obs sponsor lemma1}, we get $\rho^*_H(vw_1v_2w_2) + \rho^*_H(vw_1v_1w_3)+\rho^*_H(v_1w_4)\geq \rho^*_H(vv_1v_2w_1\dots w_4) + \rho^*_H(vw_1)+\rho^*_H(v_1)\geq 3\cdot 2 + 7\cdot 4 - 8 + 7 = 33$. So, $\rho^*_H(v_1w_4)\geq 33-2\cdot 13 = 7$.

By \Cref{claim3 sponsor lemma1}, $\rho^*_H(vw_1v_1w_4)\leq 13$. Finally, we have $\rho^*_H(vw_1v_2w_2) + \rho^*_H(vw_1v_1w_3)+\rho^*_H(vw_1v_1w_4)\leq 3\cdot 13 = 39$. However, by \Cref{potadd} then \Cref{obs sponsor lemma1}, $\rho^*_H(vw_1v_2w_2) + \rho^*_H(vw_1v_1w_3)+\rho^*_H(vw_1v_1w_4)\geq \rho^*_H(vv_1v_2w_1\dots w_4) + 2\rho^*_H(vw_1) +\rho^*_H(v_1) \geq 3\cdot 2 + 7\cdot 4 - 8 + 2\cdot 7 = 40$.
\end{itemize}

\end{itemize}

\textbf{For $\mathbf{k=3}$:}\\
Suppose that $\rho^*_H(vw_j)\leq 6$ for all $1\leq j\leq 3$. 

Let $\{i,i',i''\}$ be any permutation of $\{1,2,3\}$, $\rho^*_H(vw_1)+\rho^*_H(vw_2)+\rho^*_H(vw_3)+\rho^*_H(v_iv_{i'})\leq 3\cdot 6 + \rho^*_H(v_iv_{i'}) = 18 + \rho^*_H(v_iv_{i'})$. Moreover, by \Cref{potadd}, \Cref{obs sponsor lemma1} then \Cref{claim sponsor lemma1}, $\rho^*_H(vw_1)+\rho^*_H(vw_2)+\rho^*_H(vw_3)+\rho^*_H(v_iv_{i'}) \geq \rho^*_H(vv_iv_{i'}w_1w_2w_3) + 2\rho^*_H(v)\geq 3\cdot 2 + 7\cdot 3 - 8 + 2 = 21$. So, we get $\rho^*_H(v_iv_{i'})\geq 21-18=3$.

If $\rho^*_H(vv_{i''})\geq 3$, then by \Cref{claim4 sponsor lemma1}, $\rho^*_H(vv_iv_{i'}v_{i''})\leq 5$. Since $\rho^*_H(vv_iv_{i'}v_{i''}) = \rho^*_H(vv_1v_2v_3)$, we get $\rho^*_H(vw_1)+\rho^*_H(vw_2)+\rho^*_H(vw_3)+\rho^*_H(vv_1v_2v_3)\leq 3\cdot 6 + 5 = 23$. However, by \Cref{potadd}, \Cref{obs sponsor lemma1} then \Cref{claim sponsor lemma1}, $\rho^*_H(vw_1)+\rho^*_H(vw_2)+\rho^*_H(vw_3)+\rho^*_H(vv_1v_2v_3) \geq \rho^*_H(vv_1v_2v_3w_1w_2w_3) + 3\rho^*_H(v) \geq 3\cdot 3 + 7\cdot 3 - 8 + 3 = 25$.

Observe the previous argument holds for any permutation of $\{i,i',i''\}=\{1,2,3\}$. So $\rho^*_H(vv_{i''})\leq 2$ for all $1\leq i''\leq 3$. Thus, we get $\rho^*_H(vw_1)+\rho^*_H(vw_2)+\rho^*_H(vw_3)+\rho^*_H(vv_1)+\rho^*_H(vv_2)+\rho^*_H(vv_3)\leq 3\cdot 6 + 3\cdot 2 = 24$. However, by \Cref{potadd}, \Cref{obs sponsor lemma1} then \Cref{claim sponsor lemma1}, $\rho^*_H(vw_1)+\rho^*_H(vw_2)+\rho^*_H(vw_3)+\rho^*_H(vv_1)+\rho^*_H(vv_2)+\rho^*_H(vv_3) \geq \rho^*_H(vv_1v_2v_3w_1w_2w_3) + 5\rho^*_H(v) \geq 3\cdot 3 + 7\cdot 3 - 8 + 5 = 27$.

Now, suppose there exists $1\leq j\leq 3$ such that $\rho^*_H(vw_j)\geq 7$. Say w.l.o.g. that $\rho^*_H(vw_1)\geq 7$
\begin{itemize}
\item Suppose that there exist $1\leq i\leq 3$ and $2\leq j\leq 3$ such that $\rho^*_H(v_iw_j)\geq 7$. Say w.l.o.g. that $\rho^*_H(v_3w_3)\geq 7$. By \Cref{claim3 sponsor lemma1}, $\rho^*_H(vw_1v_3w_3)\leq 13$.
\begin{itemize}
\item If $\rho^*_H(v_1v_2)\leq 2$, then $\rho^*_H(vw_1v_3w_3) + \rho^*_H(v_1v_2) + \rho^*_H(v_2w_2)\leq 13 + 2 + \rho^*_H(v_2w_2) = 15 + \rho^*_H(v_2w_2)$. By \Cref{potadd} then \Cref{obs sponsor lemma1}, $\rho^*_H(vw_1v_3w_3) + \rho^*_H(v_1v_2) + \rho^*_H(v_2w_2)\geq \rho^*_H(vv_1v_2v_3w_1w_2w_3) + \rho^*_H(v_2) \geq 3\cdot 3 + 7\cdot 3 - 8 = 22$. So, $\rho^*_H(v_2w_2)\geq 22-15=7$. 

By \Cref{claim3 sponsor lemma1}, $\rho^*_H(vw_1v_2w_2)\leq 13$. Thus, we get $\rho^*_H(vw_1v_2w_2)+\rho^*_H(vw_1v_3w_3)+\rho^*_H(v_1v_2)\leq 2\cdot 13 + 2 = 28$. However, by \Cref{potadd} then \Cref{obs sponsor lemma1}, $\rho^*_H(vw_1v_2w_2)+\rho^*_H(vw_1v_3w_3)+\rho^*_H(v_1v_2)\geq \rho^*_H(vv_1v_2v_3w_1w_2w_3) + \rho^*_H(vw_1) + \rho^*_H(v_2)\geq 3\cdot 3 + 7\cdot 3 - 8 + 7 = 29$.

\item If $\rho^*_H(v_1v_2)\geq 3$, then by \Cref{claim2 sponsor lemma1}, $\rho^*_H(vw_1v_1v_2)\leq 9$. So, we get $\rho^*_H(vw_1v_1v_2)+\rho^*_H(vw_1v_3w_3)+\rho^*_H(v_2w_2)\leq 9+13+\rho^*_H(v_2w_2) = 22 + \rho^*_H(v_2w_2)$. By \Cref{potadd} then \Cref{obs sponsor lemma1}, $\rho^*_H(vw_1v_1v_2)+\rho^*_H(vw_1v_3w_3)+\rho^*_H(v_2w_2)\geq \rho^*_H(vv_1v_2v_3w_1w_2w_3) + \rho^*_H(vw_1) + \rho^*_H(v_2)\geq 3\cdot 3 + 7\cdot 3 - 8 + 7 = 29$. As a result, $\rho^*_H(v_2w_2)\geq 29-22=7$.

By \Cref{claim3 sponsor lemma1}, $\rho^*_H(vw_1v_2w_2)\leq 13$. Thus, $\rho^*_H(vw_1v_3w_3) + \rho^*_H(vw_1v_2w_2) + \rho^*_H(vw_1v_1v_2) \leq 2\cdot 13 + 9 = 35$. However, by \Cref{potadd} then \Cref{obs sponsor lemma1}, $\rho^*_H(vw_1v_3w_3) + \rho^*_H(vw_1v_2w_2) + \rho^*_H(vw_1v_1v_2) \geq \rho^*_H(vv_1v_2v_3w_1w_2w_3) + 2\rho^*_H(vw_1) + \rho^*_H(v_2)\geq 3\cdot 3 + 7\cdot 3 - 8 + 2\cdot 7 = 36$. 

\end{itemize}

\item Suppose that $\rho^*_H(v_iw_j)\leq 6$ for all $1\leq i\leq 3$ and $2\leq j\leq 3$.

If $\rho^*_H(v_1v_2)\geq 3$, then by \Cref{claim2 sponsor lemma1}, $\rho^*_H(vw_1v_1v_2)\leq 9$. Thus, $\rho^*_H(vw_1v_1v_2) + \rho^*_H(v_2w_2) + \rho^*_H(v_3w_3)\leq 9 + 2\cdot 6 = 21$. However, by \Cref{potadd} then \Cref{obs sponsor lemma1}, $\rho^*_H(vw_1v_1v_2) + \rho^*_H(v_2w_2) + \rho^*_H(v_3w_3)\geq \rho^*_H(vv_1v_2v_3w_1w_2w_3) + \rho^*_H(v_2) \geq 3\cdot 3 + 7\cdot 3 - 8 =22$.

So, $\rho^*_H(v_1v_2)\leq 2$. Thus, $\rho^*_H(v_1v_2) + \rho^*_H(v_2w_2) + \rho^*_H(v_3w_3)\leq 2 + 2\cdot 6 = 14$. Moreover, by \Cref{potadd} then \Cref{obs sponsor lemma1}, $\rho^*_H(v_1v_2) + \rho^*_H(v_2w_2) + \rho^*_H(v_3w_3)\geq \rho^*_H(v_1v_2v_3w_2w_3) + \rho^*_H(v_2) \geq 3\cdot 3 + 7\cdot 2 - 9 + \rho^*_H(v_2) = 14 + \rho^*_H(v_2)$. As a result, $\rho^*_H(v_2) \leq 14 - 14 = 0$. However, $\rho^*_H(v_1w_2)+\rho^*_H(v_3w_3) + \rho^*_H(v_2) \leq 2\cdot 6 + 0 = 12$ and by \Cref{potadd} then \Cref{obs sponsor lemma1}, $\rho^*_H(v_1w_2)+\rho^*_H(v_3w_3) + \rho^*_H(v_2) \geq \rho^*_H(v_1v_2v_3w_2w_3) \geq 3\cdot 3 + 7\cdot 2 - 9 = 14$.   

\end{itemize}    

\textbf{For $\mathbf{k=4}$:}\\
If $\rho^*_H(v_1v_2)\leq 2$ and $\rho^*_H(v_3v_4)\leq 2$ then $\rho^*_H(v_1v_2)+\rho^*_H(v_3v_4)+\rho^*_H(w_1)+\rho^*_H(w_2)\leq 4+\rho^*_H(w_1)+\rho^*_H(w_2)$. Moreover, by \Cref{potadd} then \Cref{obs sponsor lemma1}, $\rho^*_H(v_1v_2)+\rho^*_H(v_3v_4)+\rho^*_H(w_1)+\rho^*_H(w_2)\geq \rho^*_H(v_1v_2v_3v_4w_1w_2)\geq 3\cdot 4 + 7\cdot 2 - 9 = 17$. As a result, $\rho^*_H(w_1)+\rho^*_H(w_2)\geq 17-4=13$. So, $\rho^*_H(w_1)\geq 7$ w.l.o.g. and by \Cref{potsuperset}, $\rho^*_H(w_1v_i)\geq 7$ for all $1\leq i\leq 4$. 

At the same time, $\rho^*_H(v_1v_2)+\rho^*_H(v_3v_4)+\rho^*_H(vw_2)\leq 4+ \rho^*_H(vw_2)$. Moreover, by \Cref{potadd} then \Cref{obs sponsor lemma1}, $\rho^*_H(v_1v_2)+\rho^*_H(v_3v_4)+\rho^*_H(vw_2)\geq \rho^*_H(vv_1v_2v_3v_4w_2)\geq 3\cdot 4 + 7 - 8 = 11$. As a result, $\rho^*_H(vw_2)\geq 11-4=7$.

By \Cref{claim3 sponsor lemma1}, we get $\rho^*_H(vw_2w_1v_i)\leq 13$ for all $1\leq i\leq 4$. As a result, we have $\rho^*_H(v_1v_2)+\rho^*_H(v_3v_4)+\rho^*_H(vw_1w_2v_1)\leq 2\cdot 2 + 13 = 17$. However, by \Cref{potadd} then \Cref{obs sponsor lemma1}, $\rho^*_H(v_1v_2)+\rho^*_H(v_3v_4)+\rho^*_H(vw_1w_2v_1)\geq \rho^*_H(vv_1v_2v_3v_4w_1w_2) + \rho^*_H(v_1) \geq 3\cdot 4 + 7\cdot 2 - 8 =18$.

Thus, we can suppose w.l.o.g. that $\rho^*_H(v_1v_2)\geq 3$.
\begin{itemize}
\item Suppose that $\rho^*_H(vw_1)\leq 6$ and $\rho^*_H(vw_2)\leq 6$.
\begin{itemize}
\item Suppose that $\rho^*_H(vv_3)\leq 2$ and $\rho^*_H(vv_4)\leq 2$.\\ Then $\rho^*_H(vw_1)+\rho^*_H(vw_2)+\rho^*_H(vv_3)+\rho^*_H(vv_4)+\rho^*_H(v_1) +\rho^*_H(v_2)\leq 2\cdot 6 + 2\cdot 2 + \rho^*_H(v_1)+\rho^*_H(v_2)= 16 + \rho^*_H(v_1)+\rho^*_H(v_2)$. Moreover, by \Cref{potadd}, \Cref{obs sponsor lemma1} then \Cref{claim sponsor lemma1}, $\rho^*_H(vw_1)+\rho^*_H(vw_2)+\rho^*_H(vv_3)+\rho^*_H(vv_4)+\rho^*_H(v_1)+\rho^*_H(v_2)\geq \rho^*_H(vv_1v_2v_3v_4w_1w_2) + 3\rho^*_H(v)\geq 3\cdot 4 + 7\cdot 2 - 8 + 3\cdot 1=21$. As a result, $\rho^*_H(v_1)+\rho^*_H(v_2)\geq 21-16=5$. So, $\rho^*_H(v_1)\geq 3$ w.l.o.g. and by \Cref{potsuperset} $\rho^*_H(v_1v_i)\geq 3$ for all $2\leq i\leq 4$.

If $\rho^*_H(vv_2)\geq 3$ then $\rho^*_H(vv_2v_1v_3)\leq 5$ and $\rho^*_H(vv_2v_1v_4)\leq 5$ by \Cref{claim4 sponsor lemma1}. As a result, $22= 2\cdot 6 + 2\cdot 5 \geq  \rho^*_H(vw_1)+\rho^*_H(vw_2)+\rho^*_H(vv_2v_1v_3)+\rho^*_H(vv_2v_1v_4)\geq \rho^*_H(vv_1v_2v_3v_4w_1w_2) + \rho^*_H(vv_1v_2)+2\rho^*_H(v)\geq 3\cdot 4 + 7\cdot 2 - 8 + 3 + 2\cdot 1=23$ by \Cref{potadd}, \Cref{obs sponsor lemma1} then \Cref{potsuperset} and \Cref{claim sponsor lemma1}.

If $\rho^*(vv_2)\leq 2$, then $18 = 3\cdot 2 + 2\cdot 6\geq \rho^*_H(vv_2)+\rho^*_H(vv_3)+\rho^*_H(vv_4)+\rho^*_H(vw_1)+\rho^*_H(vw_2)\geq \rho^*_H(vv_2v_3v_4w_1w_2)+4\rho^*_H(v)\geq 3\cdot 3 + 7\cdot 2 - 8 + 4\cdot 1 = 19$ by \Cref{potadd}, \Cref{obs sponsor lemma1} then \Cref{claim sponsor lemma1}.

\item Suppose w.l.o.g. that $\rho^*_H(vv_3)\geq 3$.\\
Then, by \Cref{claim4 sponsor lemma1}, $\rho^*_H(vv_3v_1v_2)\leq 5$. As a result, $\rho^*_H(vv_3v_1v_2)+\rho^*_H(vw_1)+\rho^*_H(vw_2)+\rho^*_H(vv_4)\leq 5+2\cdot 6+\rho^*_H(vv_4)=17+\rho^*_H(vv_4)$. Moreover, by \Cref{potadd}, \Cref{obs sponsor lemma1} then \Cref{claim sponsor lemma1}, $\rho^*_H(vv_3v_1v_2)+\rho^*_H(vw_1)+\rho^*_H(vw_2)+\rho^*_H(vv_4)\geq \rho^*_H(vv_1v_2v_3v_4w_1w_2) + 3\rho^*_H(v)\geq 3\cdot 4 + 7\cdot 2 - 8 + 3\cdot 1 = 21$. So, $\rho^*_H(vv_4)\geq 21-17=4$. 

Thus, by \Cref{claim4 sponsor lemma1}, $\rho^*_H(vv_4v_1v_2)\leq 5$. Finally, $22= 2\cdot 6 + 2\cdot 5 \geq  \rho^*_H(vw_1)+\rho^*_H(vw_2)+\rho^*_H(vv_3v_1v_2)+\rho^*_H(vv_4v_1v_2)\geq \rho^*_H(vv_1v_2v_3v_4w_1w_2) + \rho^*_H(vv_1v_2)+2\rho^*_H(v)\geq 3\cdot 4 + 7\cdot 2 - 8 + 3 + 2\cdot 1=23$ by \Cref{potadd}, \Cref{obs sponsor lemma1} then \Cref{potsuperset}, and \Cref{claim sponsor lemma1}. 
\end{itemize}

\item Suppose w.l.o.g. that $\rho^*_H(vw_1)\geq 7$, by \Cref{claim2 sponsor lemma1}, $\rho^*_H(vw_1v_1v_2)\leq 9$.\\
If $\rho^*_H(v_3v_4)\geq 3$, then by \Cref{claim2 sponsor lemma1}, $\rho^*_H(vw_1v_3v_4)\leq 9$. As a result, $\rho^*_H(vw_1v_1v_2)+\rho^*_H(vw_1v_3v_4)+\rho^*_H(vw_2)\leq 2\cdot 9 +\rho^*_H(vw_2)=18 +\rho^*_H(vw_2)$. Moreover, by \Cref{potadd}, \Cref{obs sponsor lemma1} then \Cref{claim sponsor lemma1}, $\rho^*_H(vw_1v_1v_2)+\rho^*_H(vw_1v_3v_4)+\rho^*_H(vw_2)\geq \rho^*_H(vv_1v_2v_3v_4w_1w_2)+ \rho^*_H(vw_1)+\rho^*_H(v)\geq 3\cdot 4 + 7\cdot 2 - 8 + 7+1 = 26$. So, $\rho^*_H(vw_2)\geq 26-18=8$. Thus, by \Cref{claim2 sponsor lemma1}, $\rho^*_H(vw_2v_3v_4)\leq 9$. Finally, $18=2\cdot 9\geq \rho^*_H(vw_1v_1v_2)+\rho^*_H(vw_2v_3v_4)\geq \rho^*_H(vv_1v_2v_3v_4w_1w_2)+\rho^*_H(v)\geq 3\cdot 4 - 7\cdot 2 - 8 + 1=19$ by \Cref{potadd}, \Cref{obs sponsor lemma1}, and \Cref{claim sponsor lemma1}.

If $\rho^*_H(v_3v_4)\leq 2$, then we get $\rho^*_H(vw_1v_1v_2)+\rho^*_H(v_3v_4)+\rho^*_H(vw_2)\leq 9+2+\rho^*_H(vw_2)=11+\rho^*_H(vw_2)$. Moreover, by \Cref{potadd}, \Cref{obs sponsor lemma1} then \Cref{claim sponsor lemma1}, $\rho^*_H(vw_1v_1v_2)+\rho^*_H(v_3v_4)+\rho^*_H(vw_2)\geq \rho^*_H(vv_1v_2v_3v_4w_1w_2)+ \rho^*_H(v)\geq 3\cdot 4 + 7\cdot 2 - 8 + 1 = 19$. As a result, $\rho^*_H(vw_2)\geq 19 -11 = 8$ and by \Cref{claim2 sponsor lemma1}, $\rho^*_H(vw_2v_1v_2)\leq 9$. Finally, $20=2\cdot 9 + 2\geq \rho^*_H(vw_1v_1v_2)+\rho^*_H(vw_2v_1v_2) + \rho^*_H(v_3v_4)\geq \rho^*_H(vv_1v_2v_3v_4w_1w_2)+\rho^*_H(vv_1v_2) \geq 3\cdot 4 + 7\cdot 2 - 8 + 3=21$ by \Cref{potadd}, \Cref{obs sponsor lemma1}, and \Cref{potsuperset}.  
\end{itemize}

\textbf{For $\mathbf{k=5}$:}
\begin{itemize}
\item Suppose that $\rho^*_H(vw_1)\leq 6$.\\
As a result, $\rho^*_H(vw_1) + \rho^*_H(vv_1v_2v_3v_4v_5)\leq 6+\rho^*_H(vv_1v_2v_3v_4v_5)$. Moreover, by \Cref{potadd}, \Cref{obs sponsor lemma1} then \Cref{claim sponsor lemma1}, $\rho^*_H(vw_1) + \rho^*_H(vv_1v_2v_3v_4v_5)\geq \rho^*_H(vv_1v_2v_3v_4v_5w_1) + \rho^*_H(v)\geq 3\cdot 5 + 7 - 8 + 1= 15$. So, $\rho^*_H(vv_1v_2v_3v_4v_5) \geq 15 - 6 = 9$.

By \Cref{potadd}, \Cref{obs sponsor lemma1} then \Cref{claim sponsor lemma1}, $\sum_{i=1}^5 \rho^*(vv_i)\geq \rho^*_H(vv_1v_2v_3v_4v_5) + 4\rho^*_H(v)\geq 9 + 4 = 13$. So, there exists $1\leq i\leq 5$ such that $\rho^*_H(vv_i)\geq 3$. 

Say w.l.o.g. that $\rho^*(vv_1)\geq 3$.

If $\rho^*(v_iv_j)\leq 2$ for all $2\leq i\neq j\leq 5$, then $\rho^*_H(v_2v_3)+\rho^*_H(v_4v_5)+\rho^*_H(vw_1)\leq 2\cdot 2 + 6 = 10$. However, by \Cref{potadd}, \Cref{obs sponsor lemma1} then \Cref{claim sponsor lemma1}, $\rho^*_H(v_2v_3)+\rho^*_H(v_4v_5)+\rho^*_H(vw_1)\geq \rho^*_H(vv_2v_3v_4v_5w_1) \geq 3\cdot 4 + 7 - 8 = 11$. 

So, there exist $2\leq i\neq j\leq 5$ such that $\rho^*_H(v_iv_j)\geq 3$. Say w.l.o.g. that $\rho^*_H(v_2v_3)\geq 3$.

By \Cref{claim4 sponsor lemma1}, $\rho^*_H(vv_1v_2v_3)\leq 5$. Moreover, by \Cref{potadd}, \Cref{obs sponsor lemma1} then \Cref{claim sponsor lemma1}, $\rho^*_H(vv_1v_2v_3)+\rho^*_H(vw_1)+\rho^*_H(v_4v_5) \geq \rho^*_H(vv_1v_2v_3v_4v_5w_1) + \rho^*_H(v) \geq 3\cdot 5 + 7 - 8 + 1 = 15$. As a result, $5+6+\rho^*_H(v_4v_5) \geq \rho^*_H(vv_1v_2v_3)+\rho^*_H(vw_1)+\rho^*_H(v_4v_5) \geq 15$. In other words, $\rho^*_H(v_4v_5)\geq 15-11=4$.

By \Cref{claim4 sponsor lemma1}, $\rho^*_H(vv_1v_4v_5)\leq 5$. As a result, $\rho^*_H(vv_1v_2v_3)+\rho^*_H(vv_1v_4v_5)+\rho^*_H(vw_1)\leq 2\cdot 5 + 6 = 16$. However, by \Cref{potadd}, \Cref{obs sponsor lemma1} then \Cref{claim sponsor lemma1}, $\rho^*_H(vv_1v_2v_3)+\rho^*_H(vv_1v_4v_5)+\rho^*_H(vw_1)\geq \rho^*_H(vv_1v_2v_3v_4v_5w_1) + \rho^*_H(vv_1) + \rho^*_H(v)\geq 3\cdot 5 + 7 - 8 + 3 + 1 = 18$.

\item Suppose that $\rho^*_H(vw_1)\geq 7$.\\
If $\rho^*_H(v_iv_j)\leq 2$ for all $1\leq i\neq j\leq 5$, then $\rho^*_H(v_1v_2)+\rho^*_H(v_3v_4)+\rho^*_H(v_5v_1)\leq 3\cdot 2 = 6$. Moreover, by \Cref{potadd} then \Cref{obs sponsor lemma1} $\rho^*_H(v_1v_2)+\rho^*_H(v_3v_4)+\rho^*_H(v_5v_1)\geq \rho^*_H(v_1v_2v_3v_4v_5) +\rho^*_H(v_1)\geq 3\cdot 5 - 9 + \rho^*_H(v_1)= 6 + \rho^*_H(v_1)$. So, $\rho^*_H(v_1)\leq 6-6 = 0$. Symmetrically, $\rho^*_H(v_i)\leq 0$ for all $2\leq i\leq 5$. However, by \Cref{potadd} then \Cref{obs sponsor lemma1}, $0\geq \sum_{i=1}^5\rho^*_H(v_i)\geq \rho^*_H(v_1v_2v_3v_4v_5) \geq 3\cdot 5 - 9 = 6$. So, w.l.o.g. $\rho^*_H(v_1v_2)\geq 3$.

By \Cref{claim2 sponsor lemma1}, $\rho^*_H(vw_1v_1v_2)\leq 9$. Moreover, by \Cref{potadd}, \Cref{obs sponsor lemma1} then \Cref{claim sponsor lemma1}, $\rho^*_H(vw_1v_1v_2)+\rho^*_H(v_3v_4)+\rho^*_H(v_3v_5) \geq \rho^*_H(vv_1v_2v_3v_4v_5w_1) + \rho^*_H(v_3) \geq 3\cdot 5 + 7 - 8 = 14$. As a result, $9+\rho^*_H(v_3v_4)+\rho^*_H(v_3v_5) \geq \rho^*_H(vw_1v_1v_2)+\rho^*_H(v_3v_4)+\rho^*_H(v_3v_5) \geq 14$. In other words, $\rho^*_H(v_3v_4)+\rho^*_H(v_3v_5)\geq 14-9=5$. So, w.l.o.g. $\rho^*_H(v_3v_4)\geq 3$. 

By \Cref{claim2 sponsor lemma1}, $\rho^*_H(vw_1v_3v_4)\leq 9$. Moreover, by \Cref{potadd}, \Cref{obs sponsor lemma1} then \Cref{claim sponsor lemma1}, $\rho^*_H(vw_1v_1v_2)+\rho^*_H(vw_1v_3v_4)+\rho^*_H(v_3v_5) \geq \rho^*_H(vv_1v_2v_3v_4v_5w_1) + \rho^*_H(vw_1) + \rho^*_H(v_3) \geq 3\cdot 5 + 7 - 8 + 7 = 21$. As a result, $2\cdot 9+\rho^*_H(v_3v_5) \geq \rho^*_H(vw_1v_1v_2)+\rho^*_H(vw_1v_3v_4)+\rho^*_H(v_3v_5) \geq 21$. In other words, $\rho^*_H(v_3v_5)\geq 21-18=3$.

By \Cref{claim2 sponsor lemma1}, $\rho^*_H(vw_1v_3v_5)\leq 9$. As a result, $\rho^*_H(vw_1v_1v_2)+\rho^*_H(vw_1v_3v_4)+\rho^*_H(vw_1v_3v_5)\leq 3\cdot 9= 27$. However, by \Cref{potadd}, \Cref{obs sponsor lemma1} then \Cref{claim sponsor lemma1}, $\rho^*_H(vw_1v_1v_2)+\rho^*_H(vw_1v_3v_4)+\rho^*_H(vw_1v_3v_5)\geq \rho^*_H(vv_1v_2v_3v_4v_5w_1) + 2\rho^*_H(vw_1) + \rho^*_H(v_3)\geq 3\cdot 5 + 7 - 8 + 2\cdot 7 = 28$.
\end{itemize}

\textbf{For $\mathbf{k=6}$:}\\
By \Cref{potadd}, \Cref{obs sponsor lemma1} then \Cref{claim sponsor lemma1}, $\sum_{i=1}^6\rho^*_H(vv_i) \geq \rho^*_H(vv_1v_2v_3v_4v_5v_6) + 5\rho^*_H(v) \geq 3\cdot 6 - 8 +5 =15$. So, there exists $1\leq i\leq 6$ such that $\rho^*_H(vv_i)\geq 3$.

Say w.l.o.g. that $\rho^*_H(vv_1)\geq 3$. 

If $\rho^*_H(v_iv_j)\leq 2$ for all $2\leq i\neq j\leq 6$, then $\rho^*_H(v_2v_3)+\rho^*_H(v_4v_5)+\rho^*_H(v_6v_2)\leq 3\cdot 2 = 6$. Moreover, by \Cref{potadd} then \Cref{obs sponsor lemma1}, $\rho^*_H(v_2v_3)+\rho^*_H(v_4v_5)+\rho^*_H(v_6v_2)\geq \rho^*_H(v_2v_3v_4v_5v_6) +\rho^*_H(v_2)\geq 3\cdot 5 - 9 + \rho^*_H(v_2)$. So, $\rho^*_H(v_2)\leq 6-6 = 0$. Symmetrically, $\rho^*_H(v_i)\leq 0$ for all $2\leq i\leq 6$. However, by \Cref{potadd} then \Cref{obs sponsor lemma1}, $0\geq \sum_{i=2}^6\rho^*_H(v_i)\geq \rho^*_H(v_2v_3v_4v_5v_6) \geq 3\cdot 5 - 9 = 6$. So, there exist $2\leq i\neq j\leq 6$ such that $\rho^*_H(v_iv_j)\geq 3$.

Say w.l.o.g. that $\rho^*_H(v_2v_3)\geq 3$.

By \Cref{claim4 sponsor lemma1}, $\rho^*_H(vv_1v_2v_3)\leq 5$. Moreover, by \Cref{potadd} then \Cref{obs sponsor lemma1}, $\rho^*_H(vv_1v_2v_3)+\rho^*_H(v_4v_5)+\rho^*_H(v_4v_6) \geq \rho^*_H(vv_1v_2v_3v_4v_5v_6) + \rho^*_H(v_4) \geq 3\cdot 6 - 8 = 10$. As a result, $\rho^*_H(v_4v_5)+\rho^*_H(v_4v_6) + 5 \geq \rho^*_H(vv_1v_2v_3)+\rho^*_H(v_4v_5)+\rho^*_H(v_4v_6)\geq 10$. In other words, $\rho^*_H(v_4v_5)+\rho^*_H(v_4v_6)\geq 10-5=5$. So w.l.o.g. $\rho^*_H(v_4v_5)\geq 3$.

By \Cref{claim4 sponsor lemma1}, $\rho^*_H(vv_1v_4v_5)\leq 5$. Moreover, by \Cref{potadd} then \Cref{obs sponsor lemma1}, $\rho^*_H(vv_1v_2v_3)+\rho^*_H(vv_1v_4v_5)+\rho^*_H(v_4v_6) \geq \rho^*_H(vv_1v_2v_3v_4v_5v_6) + \rho^*_H(vv_1) + \rho^*_H(v_4) \geq 3\cdot 6 - 8 + 3 = 13$. As a result, $\rho^*_H(v_4v_6) + 2\cdot 5 \geq \rho^*_H(vv_1v_2v_3)+\rho^*_H(vv_1v_4v_5)+\rho^*_H(v_4v_6)\geq 13$. So, $\rho^*_H(v_4v_6)\geq 13-10=3$.

By \Cref{claim4 sponsor lemma1}, $\rho^*_H(vv_1v_4v_6)\leq 5$. As a result, $\rho^*_H(vv_1v_2v_3)+\rho^*_H(vv_1v_4v_5)+\rho^*_H(vv_1v_4v_6)\leq 3\cdot 5 = 15$. However, by \Cref{potadd} then \Cref{obs sponsor lemma1}, $\rho^*_H(vv_1v_2v_3)+\rho^*_H(vv_1v_4v_5)+\rho^*_H(vv_1v_4v_6) \geq \rho^*_H(vv_1v_2v_3v_4v_5v_6) + 2\rho^*_H(vv_1) + \rho^*_H(v_4) \geq 3\cdot 6 - 8 + 2\cdot 3 = 16$.
\end{itemize}

\end{proof}

\begin{lemma} \label{sponsor lemma2}
Consider $u$ a $7$-vertex that is incident to a unique $3$-path $up_1p_2p_3v$ and let $P=\{p_1,p_2,p_3\}$. Suppose that $\rho^*_{G-P}(u)\leq \rho^*_{G-P}(v)$, that $u$ is adjacent to at least one $2$-path where the other endvertex is a $5^-$-vertex, and that $u$ is adjacent to exactly one vertex $x$ where $d^*_G(x)\leq 12$ (and so $d_G(x)\leq 3$). Then, $u$ has another neighbor that is neither a $(2,2,0)$-vertex nor a $2$-vertex belonging to a $2$-path.
\end{lemma}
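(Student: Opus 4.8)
The plan is to follow the proof of \Cref{sponsor lemma1} almost verbatim, the only structural change being that one of the six non-$3$-path neighbours of $u$ — namely $x$ — is set aside and the remaining five are assumed ``bad''. So suppose for contradiction that every neighbour of $u$ distinct from $p_1$ and $x$ is a $(2,2,0)$-vertex or a $2$-vertex lying on a $2$-path, and write these as $k$ $2$-paths $uq_iq'_iv_i$ ($1\le i\le k$) and $l$ $(2,2,0)$-neighbours $w_1,\dots,w_l$ with $k+l=5$; by the hypothesis on the $5^-$-endvertex we may assume $v_1$ has degree at most $5$. Exactly as in \Cref{sponsor lemma1}, \Cref{3-path lemma}, \Cref{2-path lemma} and \Cref{2-path lemma1} make $u,v,v_1,\dots,v_k,w_1,\dots,w_l$ and the outer endpoints of the $(2,2,0)$-paths pairwise distinct where this is needed. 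By \Cref{pot 3-path lemma} together with $\rho^*_{G-P}(u)\le\rho^*_{G-P}(v)$ we get $\rho^*_{G-P}(v)\ge 1$, hence $\rho^*_H(v)\ge 1$ for every subgraph $H$ of $G-P$ by \Cref{potsubgraph} — the analogue of \Cref{claim sponsor lemma1}.

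Put $H=G-(\{u,p_1,p_2,p_3\}\cup\{q_i,q'_i:1\le i\le k\})$ and fix a colouring $\psi$ of $H$ by minimality; the aim is to extend it to $G$. As in \Cref{sponsor lemma1}, $p_2$ ($d^*_G(p_2)=4$), the inner vertices $r_j,s_j$ of the $w_j$ ($d^*_G=5$), the $q'_i$ and $p_3$ (each with a, possibly unique, available colour chosen at the right moment) are never the obstruction; and the extra vertex $x$, coloured \emph{before} $u$ and the rest of $N_G(u)$, then sees only its at most two other neighbours and their neighbours — at most five colours, since $d^*_G(x)\le 12$ forces $d_G(x)\le3$ and hence a small degree-sum there — so $x$ is harmless too (it merely becomes one more colour forbidden to $u$, the $q_i$ and the $w_j$). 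Consequently the only genuinely contested vertices are $u,p_1$, the $q_i$, the $q'_i$ with $v_i$ a $6^+$-vertex, and the $w_j$, and by \Cref{Hall} a failure to colour them forces one of the two rigid palette situations of \Cref{sponsor lemma1}.

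To each of those two Hall-obstructions we attach one auxiliary $2$-path or edge to $H$, and argue via \Cref{potlemma} (which keeps $\mad\le\frac{18}{7}$), \Cref{potlemma2} and minimality of $G$ that both cannot be added at once; this produces the analogues of \Cref{claim2 sponsor lemma1,claim3 sponsor lemma1,claim4 sponsor lemma1,claim5 sponsor lemma1}, namely upper bounds on $\rho^*_H$ of small subsets of $\{v,v_1,\dots,v_k,w_1,\dots,w_l\}$. Combining these with the lower bounds coming from \Cref{potlowerbound} (packaged as in \Cref{obs sponsor lemma1}, but now with $k+l=5$), with \Cref{potadd}, \Cref{potsuperset}, and $\rho^*_H(v)\ge1$, one runs the same accounting as in \Cref{sponsor lemma1}: for each value of $k\in\{1,2,3,4,5\}$ the resulting linear inequalities on the $\rho^*_H$-values are shown to be infeasible, which is the desired contradiction.

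The main obstacle is, as in \Cref{sponsor lemma1}, simply the length of that case analysis; the genuinely new issue is that passing from six ``bad'' neighbours to five lowers every $w$-budget by one unit (and, because $x$ is coloured first, shrinks the lists used in the Hall step by one), so one has to recheck that all of the systems remain infeasible. This is precisely where the two extra hypotheses earn their keep: the $5^-$-endvertex $v_1$ keeps $d^*_G(q'_1)\le7$, so the reducibility claims go through, and the uniqueness of the small neighbour $x$ guarantees that exactly one slot, and no more, is lost compared with \Cref{sponsor lemma1}. (One could hope to shortcut this by invoking \Cref{sponsor lemma1}, which already yields one good neighbour of $u$; but that neighbour may well be $x$ itself, so the $k+l=5$ analysis above seems unavoidable.)
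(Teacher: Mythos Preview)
Your plan diverges substantially from the paper's proof, and in a way that leaves a real gap.

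The paper does \emph{not} rerun the long case analysis of \Cref{sponsor lemma1}. Instead it proves a single short claim: in $H=G-(\{u,p_1,p_2,p_3\}\cup\{q_i,q'_i\})$ one has $\rho^*_H(vv_i)\le 2$ for every $i$, $\rho^*_H(vw_j)\le 6$ for every $j$, and $\rho^*_H(vx)\le 6$. Each inequality is obtained by adding a single auxiliary path or edge to $H$ (a $2$-path $v\,p'_1p'_2\,v_{i_0}$, respectively the edge $vz$ with $z\in\{x,w_{j_0}\}$), colouring the smaller graph by minimality, and extending. The two extra hypotheses are used exactly here: $d_G(v_1)\le 5$ lets one defer $q'_1$ to the very end, and $d^*_G(x)\le 12$ gives $x$ enough slack in the Hall count (or, in the edge case, lets one set $\phi(p_3)=\phi(z)$ so that $p_1$ sees a repeated colour). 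Once the claim is in hand, summing the $k+l+1=6$ upper bounds and comparing with a single application of \Cref{potadd} and \Cref{potlowerbound} (together with $\rho^*_H(v)\ge 1$) gives $16+4l$ on one side and $19+4l$ on the other --- contradiction, with no split on $k$.

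Your proposal, by contrast, is to port the entire machinery of \Cref{sponsor lemma1} (\Cref{claim2 sponsor lemma1}--\Cref{claim5 sponsor lemma1} and \Cref{obs sponsor lemma1}) to the $k+l=5$ situation and then ``recheck that all of the systems remain infeasible'' case by case in $k\in\{1,\dots,5\}$. You do not actually perform that recheck, and it is not automatic: dropping from six to five bad neighbours shifts every lower bound from \Cref{obs sponsor lemma1} by $3$ or $7$, the Hall obstructions change because $x$ sits in $N_G(u)$ with its own list, and you give no account of how $x$ enters the potential inequalities (it is neither a $v_i$ nor a $w_j$, so the analogues of the four claims do not mention it). Until those computations are written out, the argument is a hope rather than a proof. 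The paper's one-claim route avoids all of this; I would recommend abandoning the case split and proving the three pairwise potential bounds directly.
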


\begin{proof}
Suppose by contradiction that $u$ is incident to a unique 3-path $up_1p_2p_3v$, to a unique neighbor $x$ where $d^*_G(x)\leq 12$, to $k$ 2-paths $uq_iq'_iv_i$ for $1\leq i\leq k$ where $k\geq 1$ and $d_G(v_1)\leq 5$, and $l$ $(2,2,0)$-vertices $w_j$ for $1\leq j\leq l$ where each $w_j$ is incident to two 2-paths $w_jr_jr'_jw'_j$ and $w_js_js'_jw''_j$, and finally $k+l=5$. Due to \Cref{3-path lemma}, \Cref{2-path lemma}, \Cref{2-path lemma1}  and the fact that we have no multi-edges, $u$ is distinct from $v,x,t,v_1,\dots,v_k,w_1,\dots,w_l,w'_1,\dots,w'_l,w''_1,\dots,w''_l$, and the endvertices of the 1-path and 2-path incident to $x$ when $x$ is a $(2,1,0)$-vertex. Similarly, for all $1\leq j\leq l$, $w_j$ is distinct from $w'_1,\dots,w'_l,w''_1,\dots,w''_l$.

Let $H=G-(\{u,p_1,p_2,p_3\}\cup \{q_i,q'_i|1\leq i\leq k\})$. 

We claim that:
\begin{claim} \label{claim sponsor lemma2}
For all $1\leq i\leq k$ and $1\leq j\leq l$, $\rho^*_H(vv_i)\leq 2$, $\rho^*_H(vw_j)\leq 6$ and $\rho^*_H(vx)\leq 6$.
\end{claim}

If \Cref{claim sponsor lemma2} holds, then we have the following. Recall that $\rho^*_{G-P}(u)\leq \rho^*_{G-P}(v)$ and by \Cref{pot 3-path lemma}, $\rho^*_{G-P}(v)\geq 1$. As a result, $\rho^*_H(v)\geq \rho^*_{G-P}(v)\geq 1$ by \Cref{potsubgraph}. By \Cref{claim sponsor lemma2}, we have $\sum_{i=1}^k\rho^*_H(vv_i)+\sum_{j=1}^l\rho^*_H(vw_l)+\rho^*_H(vx)\leq 2k+6l+6 = 16 + 4l$. However, by \Cref{potadd}, then by \Cref{potlowerbound} and the fact that $\rho^*_H(v)\geq 1$, we also have $\sum_{i=1}^k\rho^*_H(vv_i)+\sum_{j=1}^l\rho^*_H(vw_l)+\rho^*_H(vx)\geq \rho^*_H(vxv_1\dots v_kw_1\dots w_l) + 5\rho^*_H(v)\geq 14+4l+5 = 19+4l$ which is contradiction.

Now, let us prove \Cref{claim sponsor lemma2}.

First, suppose that $\rho^*_H(vv_{i_0})\geq 3$ for $1\leq i_0\leq k$. We add the 2-path $vp'_1p'_2v_{i_0}$ in $H$, let $P'=\{p'_1,p'_2\}$ and let $H+P'$ be the resulting graph. Since $\rho^*_H(vv_{i_0})\geq 3$, by \Cref{potlemma}, $\mad(H+P')\leq\frac{18}{7}$. By minimality of $G$, there exists a coloring $\psi$ of $H+P'$. We define $\phi$ a coloring of $G$ as follows:
\begin{itemize}
\item If $y\in V(H)\setminus(\{x\}\cup\{w_j,r_j,s_j|1\leq j\leq l\})$, then $\phi(y)=\psi(y)$.
\item Let $\phi(q'_{i_0})=\psi(p'_2)$ unless $i_0=1$ and $\phi(p_3)=\psi(p'_1)$. 
\item We color $q'_i$ for all $i\neq i_0$ and $2\leq i\leq k$ since they all have at least one available color each. 
\item Note that $d^*_G(p_2)=4$, $d^*_G(q'_1)=d_G(v_1)+2\leq 5+2=7$ and $d^*_G(r_j)=d^*_G(s_j)=5$ for all $1\leq j\leq l$ so we can always color them last.
\item Let $L(y)$ be the list of available colors left for a vertex $y$. Observe that we have $|L(u)|\geq 8-(k-1)-3\geq 1$, $|L(x)|\geq 3$ (since $x$ sees $d^*_G(x) - 7\leq 5$ colored vertices), $|L(p_1)|,|L(q_1)|\geq 7$ and $|L(w_1)|,\dots,|L(w_l)|$, $|L(q_2)|,\dots,|L(q_k)|\geq 6$. By \Cref{Hall}, these eight vertices are colorable unless $|L(u)\cup L(x)\cup L(p_1)\cup L(q_1)\cup\dots\cup L(q_k)\cup L(w_1)\cup\dots\cup L(w_l)|=7$. However, $p_1$ sees $\phi(p_3)=\psi(p'_1)\notin\{\psi(p'_2),\psi(v_{i_0})\}=\{\phi(q'_{i_0}),\phi(v_{i_0})\}$ which $q_{i_0}$ sees when $i_0\neq 1$. And when $i_0=1$, $p_1$ sees $\phi(p_3)=\psi(p'_1)\neq \psi(v_1)=\phi(v_1)$ which $q_1$ sees. In both cases, we have $|L(p_1)\cup L(q_{i_0})|\geq 8$.
\end{itemize}

We obtain a valid coloring of $G$ so $\rho^*_H(vv_i)\leq 2$ for all $1\leq i\leq k$.

Now, suppose that $\rho^*_H(vz)\geq 7$ for $z=x$ or $z=w_{j_0}$ for $1\leq j_0\leq l$. We add the edge $e=vz$ in $H$ and let $H+e$ be the resulting graph. Since $\rho^*_H(vz)\geq 7$, by \Cref{potlemma}, $\mad(H+e)\leq \frac{18}7$. By minimality of $G$, there exists a coloring $\psi$ of $H+e$. We define $\phi$ a coloring of $G$ as follows:
\begin{itemize}
\item If $y\in V(H)\setminus(\{x\}\cup\{w_j,r_j,s_j|1\leq j\leq l\})$, then $\phi(y)=\psi(y)$.
\item Let $\phi(p_3)=\phi(z)=\psi(z)$. 
\item We color $q'_i$ for all $2\leq i\leq k$ since they all have at least one available color each. 
\item Note that $d^*_G(p_2)=4$, $d^*_G(q'_1)=d_G(v_1)+2\leq 5+2=7$ and $d^*_G(r_j)=d^*_G(s_j)=5$ for all $1\leq j\leq l$ so we can always color them last.
\item Hence, it remains to color $N_G(u)\cup\{u\}\setminus\{z\}$. Let $L(y)$ be the list of available colors left for a vertex $y$. Observe that we have $|L(u)|\geq 8-(k-1)-1-2\geq 1$, $|L(x)|\geq 2$ if $z=w_{j_0}$ (since $x$ sees $d^*_G(x)-6\leq 6$ colored vertices), $|L(q_1)|\geq 6$ and $|L(w_1)|,\dots,|L(w_l)|,|L(q_2)|,\dots,|L(q_k)|\geq 5$. Since there are six uncolored vertices without counting $p_1$, by \Cref{Hall}, we can color all of these vertices except $p_1$.
\item Finally, we can color $p_1$ since it sees eight colored vertices but it sees $\phi(p_3)=\phi(z)$ twice.
\end{itemize}

We obtain a valid coloring of $G$ so $\rho^*_H(vw_j)\leq 6$ for all $1\leq j\leq l$ and $\rho^*_H(vx)\leq 6$.

\end{proof}

\begin{figure}[H]
\begin{center}
\begin{tikzpicture}{thick}
\begin{scope}[every node/.style={circle,thick,draw,minimum size=1pt,inner sep=2}]
    \node[label={above:$u$}] (0) at (-2,0) {};
    \node[fill,label={above:$q_1$}] (10) at (-1,3.5) {};
    \node[fill,label={above:$q'_1$}] (11) at (0,3.5) {};
    \node[label={above:$v_1$}] (12) at (1,3.5) {$5^-$};
	
	\node[fill,label={above:$q_2$}] (13) at (-1,2.5) {};
    \node[fill,label={above:$q'_2$}] (14) at (0,2.5) {};
    \node[label={above:$v_2$}] (15) at (1,2.5) {};    
    
    \node[fill,label={above:$q_k$}] (20) at (-1,1.5) {};
    \node[fill,label={above:$q'_k$}] (21) at (0,1.5) {};
    \node[label={above:$v_k$}] (22) at (1,1.5) {};
	
	\node[fill,label={above:$w_1$}] (35) at (-1,0) {};    
    \node[fill,label={above:$r_1$}] (30) at (0,0.5) {};
    \node[fill,label={above:$r'_1$}] (31) at (1,0.5) {};
    \node[label={above:$w'_1$}] (32) at (2,0.5) {};
    \node[fill,label={above:$s_1$}] (40) at (0,-0.5) {};
    \node[fill,label={above:$s'_1$}] (41) at (1,-0.5) {};
    \node[label={above:$w''_1$}] (42) at (2,-0.5) {};
    
    \node[fill,label={above:$w_l$}] (55) at (-1,-2) {};
    \node[fill,label={above:$r_l$}] (50) at (0,-1.5) {};
    \node[fill,label={above:$r'_l$}] (51) at (1,-1.5) {};
    \node[label={above:$w'_l$}] (52) at (2,-1.5) {};
    \node[fill,label={above:$s_l$}] (60) at (0,-2.5) {};
    \node[fill,label={above:$s'_l$}] (61) at (1,-2.5) {};
    \node[label={above:$w''_l$}] (62) at (2,-2.5) {};

    \node[fill,label={above:$p_1$}] (3) at (-3,1) {};
    \node[fill,label={above:$p_2$}] (4) at (-4,1) {};
    \node[fill,label={above:$p_3$}] (5) at (-5,1) {};
    \node[label={above:$v$}] (6) at (-6,1) {};
    
    \node[label={above:$x$}] (7) at (-3,-1) {$3^-$};
    
    \node[draw=none] (8) at (-0.5,2.1) {$\dots$};
    
    \node[draw=none] (9) at (-1,-1) {$\dots$};
    
    \node[fill,label={above:$p'_1$}] (100) at (-4,3.8) {};
    \node[fill,label={above:$p'_2$}] (101) at (-2,4.45) {};
\end{scope}

\begin{scope}[every edge/.style={draw=black,thick}]
	\path (0) edge (3);
    \path (0) edge (10);
    \path (0) edge (13);
    \path (0) edge (20);
	\path (0) edge (35);
	\path (0) edge (55);    
    
    \path (35) edge (30);
    \path (35) edge (40);
    \path (55) edge (50);
    \path (55) edge (60);
      \path (10) edge (12);
      \path (13) edge (15);
      \path (20) edge (22);
      \path (30) edge (32);
      \path (40) edge (42);
      \path (50) edge (52);
      \path (60) edge (62);
    
    \path (3) edge (6);
    \path (0) edge (7);
    
    \path (6) edge[bend right=50,dashed] (55);
    \path (6) edge[bend right,dashed] (7);
    \path (6) edge[bend left=60,dashed] (12);
    
\end{scope}
\end{tikzpicture}
\caption{A $7$-vertex that is incident to a $3$-path, $k$ $2$-paths, $l$ $(2,2,0)$-vertices, and a vertex $x$ where $k+l=5$ and $d^*_G(x)\leq 12$.}
\label{sponsor lemma2 figure}
\end{center}
\end{figure}
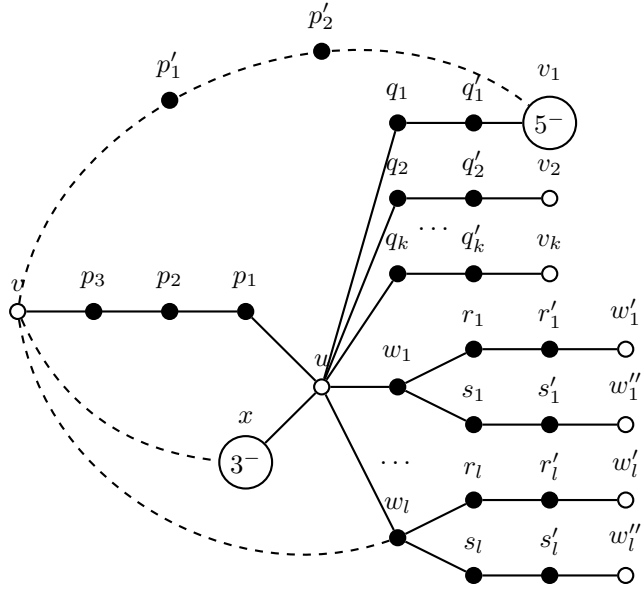

\subsection{Discharging rules \label{tonton}}

In this section, we will define a discharging procedure that contradicts the structural properties of $G$ (see \Cref{counting lemma} to \Cref{sponsor lemma2}) showing that $G$ does not exist. First, we will give a name to some special vertices in $G$.
\medskip
\begin{definition}[Small, medium, and large 2-vertex]
A 2-vertex $v$ is said to be
\begin{itemize}
\item {\em large} if it is adjacent to two $3^+$-vertices,
\item {\em medium} if it is adjacent to exactly one 2-vertex,
\item {\em small} if it is adjacent to two 2-vertices.
\end{itemize}
\end{definition}

\medskip

\begin{definition}[Bridge vertices]
We call a large $2$-vertex, a {\em $1$-path bridge} if it has a $3$-neighbor and a $6^+$-neighbor.
We call two adjacent medium $2$-vertices, a {\em $2$-path bridge} if one has a $5^-$-neighbor and the other a $7$-neighbor.
\end{definition}

\begin{definition}[Sponsor vertex] \label{sponsor definition}
Due to \Cref{tree lemma} and \Cref{three 3-paths lemma}, the $3$-paths in $G$ form a forest of stars. We can thus define the root of each tree in the forest as follows:
\begin{itemize}
\item If a tree is a star with at least two $3$-paths, then the root will be the center of the star.
\item If a tree has only one $3$-path, then let $sp_1p_2p_3r$ be such a $3$-path and $P=\{p_1,p_2,p_3\}$. Suppose w.l.o.g. that $\rho^*_{G-P}(r)\geq \rho^*_{G-P}(s)$. Then, $r$ will be the root (chosen arbitrarily if $\rho^*_{G-P}(r)= \rho^*_{G-P}(s)$).
\end{itemize}
We call a vertex {\em a sponsor} if it is a non-root endvertex of a $3$-path. To each sponsor is assigned the small $2$-vertex on the 3-path connecting it to the root.
\end{definition}

\begin{observation} \label{sponsor observation}
In \Cref{sponsor definition}, the root of a star and the root of a matching are chosen ``differently''. However, if we consider a $3$-path belonging to a star, due to \Cref{pot 3-path lemma} and \Cref{two 3-paths lemma}, the center of the star will always have a higher potential than the sponsor endvertex in the subgraph where we removed the internal $2$-vertices of the $3$-path.
\end{observation}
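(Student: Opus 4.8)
The plan is to deduce this directly from \Cref{two 3-paths lemma}, using \Cref{pot 3-path lemma} to upgrade it to a strict inequality. Recall that by \Cref{tree lemma} and \Cref{three 3-paths lemma} the $3$-paths of $G$ form a forest of stars. Fix a star with at least two $3$-paths, let $c$ be its center, and let $c p_1 p_2 p_3 s$ be one of its $3$-paths, with $P=\{p_1,p_2,p_3\}$ and $s$ the endvertex that is a leaf of the star (hence the sponsor). The goal is to show $\rho^*_{G-P}(c)\ge\rho^*_{G-P}(s)$.

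First I would produce a second $3$-path incident to $c$, say $c p'_1 p'_2 p'_3 s'$ with $P'=\{p'_1,p'_2,p'_3\}$; this exists precisely because the star has at least two $3$-paths. By \Cref{3-path lemma} and \Cref{tree lemma}, the vertices $s$, $c$, $s'$ are pairwise distinct, so $s p_3 p_2 p_1 c$ and $c p'_1 p'_2 p'_3 s'$ form two consecutive $3$-paths, and \Cref{two 3-paths lemma} applies with $c$ playing the role of the shared vertex. It yields $\rho^*_{G-P}(s)=0$ (and, symmetrically, $\rho^*_{G-P'}(s')=0$).

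Finally, since $G-P$ is a subgraph of $G$ and $\mad(G)\le\frac{18}{7}$, \Cref{potsubgraph} gives $\rho^*_{G-P}(c)\ge\rho^*_G(c)\ge 0=\rho^*_{G-P}(s)$, which already establishes the claim. To get the strict inequality, apply \Cref{pot 3-path lemma} to the $3$-path $s p_1 p_2 p_3 c$: since $\rho^*_{G-P}(s)=0\le\rho^*_{G-P}(c)$, the lemma gives $\rho^*_{G-P}(c)\ge 1>\rho^*_{G-P}(s)$, so the center really does have strictly larger potential than the sponsor endvertex, consistently with how the root is chosen for single-$3$-path trees in \Cref{sponsor definition}. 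There is essentially no obstacle here; the only point requiring care is checking that the hypotheses of \Cref{two 3-paths lemma} are genuinely met — that the two $3$-paths meet only at $c$ and have distinct far endpoints — which is immediate from the star structure guaranteed by \Cref{tree lemma} and \Cref{three 3-paths lemma}.
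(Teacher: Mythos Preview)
Your proof is correct and follows exactly the approach the paper indicates: the observation is stated without a detailed proof, merely citing \Cref{pot 3-path lemma} and \Cref{two 3-paths lemma}, and you have filled in the routine details accurately. The only minor simplification is that the appeal to \Cref{potsubgraph} for $\rho^*_{G-P}(c)\ge 0$ is unnecessary, since nonnegativity of $\rho^*$ on any subgraph of $G$ is immediate from $\mad(G)\le\frac{18}{7}$.
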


Now, we assign to each vertex $v$ the charge $\mu(v)=7d(v)-18$. By \Cref{equation}, the total sum of the charges is non-positive. We then apply the following discharging rules:

\medskip

\textbf{Vertices to vertices:}
\begin{itemize}
\item[\ru0] (see \Cref{r0 figure}):
\begin{itemize}
\item[(i)] Every $3^+$-vertex gives 2 to each adjacent large 2-neighbor, and 4 to each adjacent medium 2-neighbor.
\item[(ii)] Every sponsor gives 4 to its small 2-neighbor at distance two.
\item[(iii)] Every $6^+$-vertex gives 1 to each adjacent 1-path bridge.
\item[(iv)] Every 7-vertex gives $\frac12$ to each adjacent 2-path bridge.
\end{itemize}

\item[\ru1] (see \Cref{r1 figure}):
\begin{itemize}
\item[(i)] Every $6^+$-vertex gives 4 to each adjacent (2,2,0)-neighbor.
\item[(ii)] Every $5^+$-vertex gives $\frac52$ to each adjacent (2,1,0)-neighbor.
\item[(iii)] Every $4^+$-vertex gives 1 to each adjacent (1,1,0)-neighbor and $\frac12$ to each adjacent (2,0,0)-neighbor.
\item[(iv)] Every 1-path bridge gives 1 to its 3-neighbor.
\end{itemize}

\item[\ru2] (see \Cref{r2 figure}):
\begin{itemize}
\item[(i)] Every $5^+$-vertex gives $\frac12$ to each (2,2,2,0)-neighbor.
\item[(ii)] Every 2-path bridge gives $\frac12$ to its $5^-$-neighbor.
\end{itemize}

\end{itemize}

\renewcommand{\thesubfigure}{\roman{subfigure}}

\begin{figure}[H]
\begin{center}
\begin{subfigure}[b]{0.3\textwidth}
\begin{center}
\begin{tikzpicture}[baseline=(0.north)]
\begin{scope}[every node/.style={circle,thick,draw,minimum size=1pt,inner sep=2,font=\small}]
    \node[minimum width=23pt] (0) at (1,2) {$3^+$};
    \node[minimum width=23pt] (100) at (-1,2) {$3^+$};
    \node[fill] (3) at (0,2) {};

    \node[minimum width=23pt] (10) at (2,0) {$7$};
    \node[minimum width=23pt] (1100) at (-1,0) {$3^+$};
    \node[fill] (13) at (0,0) {};
    \node[fill] (12) at (1,0) {};

    \node[minimum width=23pt] (30) at (2,1) {$6$};
    \node[minimum width=23pt] (3100) at (-1,1) {$6$};
    \node[fill] (33) at (0,1) {};
    \node[fill] (32) at (1,1) {};

    \node[minimum width=23pt] (20) at (3,-1) {$7$};
    \node[minimum width=23pt] (2100) at (-1,-1) {$7$};
    \node[fill] (23) at (0,-1) {};
    \node[fill] (22) at (2,-1) {};
    \node[fill] (21) at (1,-1) {};
\end{scope}

\begin{scope}[every edge/.style={draw=black,thick}]
    \path (0) edge (100);
      \path[->] (0) edge[bend right] node[above] {2} (3);
      \path[->] (100) edge[bend left] node[above] {2} (3);
        \path (10) edge (1100);
      \path[->] (10) edge[bend right] node[above] {4} (12);
      \path[->] (1100) edge[bend left] node[above] {4} (13);
        \path (30) edge (3100);
      \path[->] (30) edge[bend right] node[above] {4} (32);
      \path[->] (3100) edge[bend left] node[above] {4} (33);
          \path (20) edge (2100);
      \path[->] (20) edge[bend right] node[above] {4} (22);
      \path[->] (2100) edge[bend left] node[above] {4} (23);
\end{scope}
\end{tikzpicture}
\caption{}
\end{center}
\end{subfigure}
\begin{subfigure}[b]{0.3\textwidth}
\begin{center}
\begin{tikzpicture}[baseline=(0.north)]
\begin{scope}[every node/.style={circle,thick,draw,minimum size=1pt,inner sep=2,font=\small}]
    \node[minimum width=23pt,label={above:sponsor}] (0) at (0,0) {$7$};

    \node[minimum width=23pt,label={above:root}] (100) at (-4,0) {$7$};
    \node[fill] (3) at (-3,0) {};
    \node[fill] (2) at (-2,0) {};
    \node[fill] (1) at (-1,0) {};

\end{scope}

\begin{scope}[every edge/.style={draw=black,thick}]
    \path (0) edge (100);
    \path[->] (0) edge[bend right] node [above] {4} (2);
\end{scope}
\end{tikzpicture}
\caption{sponsor case.}
\end{center}
\end{subfigure}
\begin{subfigure}[b]{0.19\textwidth}
\begin{center}
\begin{tikzpicture}[baseline=(0.north)]
\begin{scope}[every node/.style={circle,thick,draw,minimum size=1pt,inner sep=2,font=\small}]
    \node[minimum width=23pt] (0) at (-2,0) {$3$};

    \node[minimum width=23pt] (100) at (-4,0) {$6^+$};
    \node[fill] (3) at (-3,0) {};
\end{scope}

\begin{scope}[every edge/.style={draw=black,thick}]
    \path (0) edge (100);
      \path[->] (100) edge[bend left] node[above] {1} (3);
\end{scope}
\end{tikzpicture}
\caption{1-path bridge case.}
\end{center}
\end{subfigure}
\begin{subfigure}[b]{0.19\textwidth}
\begin{center}
\begin{tikzpicture}[baseline=(0.north)]
\begin{scope}[every node/.style={circle,thick,draw,minimum size=1pt,inner sep=2,font=\small}]
    \node[minimum width=23pt] (10) at (2,0) {$5^-$};
    \node[minimum width=23pt] (1100) at (-1,0) {$7$};
    \node[fill] (13) at (0,0) {};
    \node[fill] (12) at (1,0) {};
    
    \node[ellipse,minimum width=50pt,minimum height=10pt] (14) at (0.5,0) {};
\end{scope}

\begin{scope}[every edge/.style={draw=black,thick}]
        \path (10) edge (1100);
      \path[->] (1100) edge[bend left] node[above] {$\frac12$} (14);
\end{scope}
\end{tikzpicture}
\caption{2-path bridge case.}
\end{center}
\end{subfigure}
\caption{\textbf{R0}.}
\label{r0 figure}
\end{center}
\end{figure}

\begin{figure}[H]
\begin{center}
\begin{subfigure}[b]{0.4\textwidth}
\begin{center}
\begin{tikzpicture}[baseline=(3.north)]
\begin{scope}[every node/.style={circle,thick,draw,minimum size=1pt,inner sep=2,font=\small}]
    \node[minimum width=23pt] (0) at (-4,0) {$6^+$};
    \node[fill] (1) at (-3,0) {};
    
    \node[fill] (2) at (-2,-0.5) {};
    \node[fill] (3) at (-1,-0.5) {};
    \node[minimum width=23pt] (4) at (0,-0.5) {$7$};
    
    \node[fill] (5) at (-2,0.5) {};
    \node[fill] (6) at (-1,0.5) {};
    \node[minimum width=23pt] (7) at (0,0.5) {$7$};
\end{scope}

\begin{scope}[every edge/.style={draw=black,thick}]
    \path (0) edge (1);
	\path (1) edge (2);
	\path (1) edge (5);
	\path (2) edge (4);
	\path (5) edge (7);    
    
    \path[->] (0) edge[bend left] node[above] {4} (1);
\end{scope}
\end{tikzpicture}
\caption{$(2,2,0)$ case.}
\end{center}
\end{subfigure}
\begin{subfigure}[b]{0.4\textwidth}
\begin{center}
\begin{tikzpicture}[baseline=(3.north)]
\begin{scope}[every node/.style={circle,thick,draw,minimum size=1pt,inner sep=2,font=\small}]
    \node[minimum width=23pt] (0) at (-4,0) {$5^+$};
    \node[fill] (1) at (-3,0) {};
    
    \node[fill] (2) at (-2,-0.5) {};
    \node[fill] (3) at (-1,-0.5) {};
    \node[minimum width=23pt] (4) at (0,-0.5) {$7$};
    
    \node[fill] (5) at (-2,0.5) {};
    \node[minimum width=23pt] (7) at (-1,0.5) {$3^+$};
\end{scope}

\begin{scope}[every edge/.style={draw=black,thick}]
    \path (0) edge (1);
	\path (1) edge (2);
	\path (1) edge (5);
	\path (2) edge (4);
	\path (5) edge (7);    
    
    \path[->] (0) edge[bend left] node[above] {$\frac52$} (1);
\end{scope}
\end{tikzpicture}
\caption{$(2,1,0)$ case.}
\end{center}
\end{subfigure}
\begin{subfigure}[b]{0.4\textwidth}
\begin{center}
\begin{tikzpicture}[baseline=(3.north)]
\begin{scope}[every node/.style={circle,thick,draw,minimum size=1pt,inner sep=2,font=\small}]
    \node[minimum width=23pt] (0) at (-4,0) {$4^+$};
    \node[fill] (1) at (-3,0) {};
    
    \node[fill] (2) at (-2,-0.5) {};
    \node[minimum width=23pt] (4) at (-1,-0.5) {$3^+$};
    
    \node[fill] (5) at (-2,0.5) {};
    \node[minimum width=23pt] (7) at (-1,0.5) {$3^+$};

    \node[minimum width=23pt] (10) at (-4,-2) {$4^+$};
    \node[fill] (11) at (-3,-2) {};
    
    \node[fill] (12) at (-2,-2.5) {};
    \node[fill] (13) at (-1,-2.5) {};
    \node[minimum width=23pt] (14) at (0,-2.5) {$7$};
    
    \node[minimum width=23pt] (15) at (-2,-1.5) {$3^+$};
\end{scope}

\begin{scope}[every edge/.style={draw=black,thick}]
    \path (0) edge (1);
	\path (1) edge (2);
	\path (1) edge (5);
	\path (2) edge (4);
	\path (5) edge (7);    
    
    \path[->] (0) edge[bend left] node[above] {1} (1);
    
    \path (10) edge (11);
	\path (11) edge (12);
	\path (11) edge (15);
	\path (12) edge (14);  
    
    \path[->] (10) edge[bend left] node[above] {$\frac12$} (11);
\end{scope}
\end{tikzpicture}
\caption{$(1,1,0)$ and $(2,0,0)$ case.}
\end{center}
\end{subfigure}
\begin{subfigure}[b]{0.4\textwidth}
\begin{center}
\begin{tikzpicture}[baseline=(0.north)]
\begin{scope}[every node/.style={circle,thick,draw,minimum size=1pt,inner sep=2,font=\small}]
    \node[minimum width=23pt] (0) at (-2,0) {3};

    \node[minimum width=23pt] (100) at (-4,0) {$6^+$};
    \node[fill,label={[label distance=5pt]above:bridge}] (3) at (-3,0) {};
\end{scope}

\begin{scope}[every edge/.style={draw=black,thick}]
    \path (0) edge (100);
      \path[->] (3) edge[bend left] node[above] {1} (0);
\end{scope}
\end{tikzpicture}
\caption{1-path bridge case.}
\end{center}
\end{subfigure}

\caption{\textbf{R1}.}
\label{r1 figure}
\end{center}
\end{figure}

\begin{figure}[H]
\begin{center}
\begin{subfigure}[b]{0.49\textwidth}
\begin{center}
\begin{tikzpicture}[baseline=(3.north)]
\begin{scope}[every node/.style={circle,thick,draw,minimum size=1pt,inner sep=2,font=\small}]
    \node[minimum width=23pt] (1) at (-2,0) {$5^+$};
    \node[fill] (2) at (0,0) {};
    
    \node[fill] (3) at (1,0) {};
    \node[fill] (4) at (2,0) {};
    \node[minimum width=23pt] (5) at (3,0) {$7$};
    
    \node[fill] (30) at (1,1) {};
    \node[fill] (40) at (2,1) {};
    \node[minimum width=23pt] (50) at (3,1) {$7$};
    
    \node[fill] (31) at (1,-1) {};
    \node[fill] (41) at (2,-1) {};
    \node[minimum width=23pt] (51) at (3,-1) {$7$};
\end{scope}

\begin{scope}[every edge/.style={draw=black,thick}]    
    \path (1) edge (5);
	
	\path (2) edge (30);
	\path (2) edge (31);
	
	\path (30) edge (50);
	\path (31) edge (51);
		    
    \path[->] (1) edge[bend left] node[above] {$\frac12$} (2);
\end{scope}
\end{tikzpicture}
\caption{$(2,2,2,0)$ case.}
\end{center}
\end{subfigure}
\begin{subfigure}[b]{0.49\textwidth}
\begin{center}
\begin{tikzpicture}[baseline=(0.north)]
\begin{scope}[every node/.style={circle,thick,draw,minimum size=1pt,inner sep=2,font=\small}]
    \node[minimum width=23pt] (10) at (2,0) {$5^-$};
    \node[minimum width=23pt] (1100) at (-1,0) {$7$};
    \node[fill] (13) at (0,0) {};
    \node[fill] (12) at (1,0) {};
    
    \node[ellipse,minimum width=50pt,minimum height=10pt] (14) at (0.5,0) {};
\end{scope}

\begin{scope}[every edge/.style={draw=black,thick}]
        \path (10) edge (1100);
      \path[->] (14) edge[bend left] node[above] {$\frac12$} (10);
\end{scope}
\end{tikzpicture}
\caption{2-path bridge case.}
\end{center}
\end{subfigure}
\caption{\textbf{R2}.}
\label{r2 figure}
\end{center}
\end{figure}

In the following two sections, we will first prove that every vertex ends up with a non-negative charge after the discharging procedure. Thus, by \Cref{equation}, every vertex must have exactly charge 0 which will be proven to be impossible.

\subsection{Verifying that charges on each vertex are non-negative} \label{verification}

Let $\mu^*$ be the assigned charges after the discharging procedure. In what follows, we prove that: $$\forall v \in V(G), \mu^*(v)\ge 0.$$

\begin{enumerate}[\bf C{a}se 1:]
\item $d(v)=2$.\\
We have $\mu(v)=-4$. Vertex $v$ receives 4 by \ru0(i) and \ru(0ii). Now if $v$ belongs to a 1-path (resp. 2-path) bridge, then it also gives 1 (resp. $\frac12$) to a 3-vertex (resp. $5^-$-vertex) by \ru1(iv) (resp. \ru2(ii)), but it also receives 1 (resp. $\frac12$) from \ru0(iii) (resp. \ru0(iv)). In all cases, $\mu^*(v)=0$.

\item $d(v)=3$.\\
Vertex $v$ only gives away charges by \ru0(i): 4 (resp. 2) in the case of a 2-path (resp. in the case of a 1-path) and receives charges by \ru1 and \ru2(ii). Recall $\mu(v)=3$.
By \Cref{small vertex lemma}, $v$ is not a $(2,1^+,1^+)$-vertex. Let us examine all possible configurations for $v$.

\begin{itemize}
\item Suppose that $v$ is a $(2,2,0)$-vertex. Let $v_1$, $v_2$, and $u$ be the two 2-neighbors and $3^+$-neighbor of $v$ respectively. Since $v_1$ and $v_2$ satisfy $d^*(v_1) = d^*(v_2) = 5 \leq \Delta$. By \Cref{counting observation}, $d^*(v)\geq 10$ and $d^*(v)=d(u)+4$, so $d(u)\geq 6$. By \ru1(i), $v$ receives 4 from $u$. Due to \Cref{2-path lemma} and by \ru2(ii), $v$ also receives charge $\frac12$ twice from incident 2-path bridges. In total, we have $$ \mu^*(v) \geq 3-4\cdot 2 + 4 + 2\cdot\frac12 =0.$$

\item Suppose that $v$ is a $(2,1,0)$-vertex. Let $v_1$, $v_2$, and $u$ be the two 2-neighbors (where $v_1$ belongs to the 1-path and $v_2$ belongs to the 2-path) and the $3^+$-neighbor of $v$ respectively. As previously, due to \Cref{2-path lemma} and by \ru2(ii), $v$ receives $\frac12$ from the incident 2-path bridge. Vertex $v_2$ has $d^*(v_2) = 5 \leq \Delta$. By \Cref{counting observation}, $d^*(v)\geq 9$, and $d^*(v)=d(u)+4$, so $d(u)\geq 5$. Hence, $v$ receives $\frac52$ from $u$ by \ru1(ii). So,
$$ \mu^*(v) \geq 3 - 4 - 2 + \frac12 + \frac52 = 0.$$

\item Suppose that $v$ is a $(2,0,0)$-vertex. Let $u_1$, $u_2$, and $v_1$ be the two $3^+$-neighbors and the 2-neighbor of $v$ respectively. Since $d^*(v_1)=5\leq \Delta$. By \Cref{counting observation}, $d^*(v)\geq 9$ and $d^*(v)=d(u_1)+d(u_2)+2$, so $d(u_1)+d(u_2)\geq 7$. We can assume w.l.o.g. that $d(u_1)\geq 4$, thus $v$ receives $\frac12$ from $u_1$ by \ru1(iii). Due to \Cref{2-path lemma} and by \ru2(ii), $v$ also receives $\frac12$ from the incident 2-path bridge. So,
$$ \mu^*(v) \geq 3-4+\frac12 + \frac12=0.$$

\item Suppose that $v$ is a $(1,1,1)$-vertex. Let $vv'v''$ be a 1-path incident to $v$. We have $d^*(v) = 6 \leq \Delta$. It follows that $d^*(v')\geq 9$ by \Cref{counting observation} and as $d^*(v')=d(v'')+3$, we have $d(v'')\geq 6$,  meaning that $v'$ is a 1-path bridge. Thus, vertex $v$ gives 2 to each 2-neighbor by \ru0(i) and receives 1 from each 2-neighbor by \ru1(iv). We have
$$ \mu^*(v) \geq 3-3\cdot 2+3\cdot 1=0.$$

\item Suppose that $v$ is a $(1,1,0)$-vertex. Let $vv_1w_1$ and $vv_2w_2$ be the two 1-paths incident to $v$ and let $u$ be the $3^+$-neighbor of $v$.

If $d(u)=3$, then $d^*(v) = 7 \leq \Delta$. By \Cref{counting observation}, $d^*(v_1)\geq 9$. As $d^*(v_1)=d(w_1)+3$, we have $d(w_1)\geq 6$ meaning that $v$ receives 1 from $v_1$ by \ru1(iv) (and from $v_2$ by symmetry). Hence,
$$\mu^*(v)\geq 3-2\cdot 2 + 2\cdot 1 = 1.$$

If $d(u)\geq 4$, then $v$ receives 1 from $u$ by \ru1(iii).
$$ \mu^*(v)\geq 3-2\cdot2+1=0.$$

\item Suppose that $v$ is a $(1^-,0,0)$-vertex, then at worst, we have
$$ \mu^*(v)\geq 3 - 2 =1.$$

\end{itemize}

\item $d(v)=4$.\\
Vertex $v$ may give 4 (resp. 2, 1, $\frac12$) by \ru0(i) in the case of a 2-path (resp. \ru0(i) in the case of a 1-path, \ru1(iii) in the case of a (1,1,0)-neighbor, \ru1(iii) in the case of a (2,0,0)-neighbor). Recall $\mu(v)=10$. 

By \Cref{small vertex lemma}, $v$ is not a $(2,1^+,1^+,1^+)$-vertex. Hence, $v$ is incident to at most three 2-paths:
\begin{itemize}
\item If $v$ is a $(2,2,2,0)$, then let $v_1,v_2,v_3$ be the 2-neighbors along the three 2-paths and $u$ the last neighbor. Since $d(v_i)=6\leq \Delta$ for all $1\leq i\leq 3$, by \Cref{counting observation}, $d^*(v)\geq 11$. Moreover, $d^*(v)=d(u)+6$ so $d(u)\geq 5$. By \ru2(i), $v$ receives $\frac12$ from $u$. Due to \Cref{2-path lemma} and by \ru2(ii), $v$ also receives $\frac12$ from each incident 2-path bridge. So, $$\mu^*(v)\geq 10 - 3 \cdot 4 + \frac12 + 3\cdot\frac12 = 0.$$

\item If $v$ is a $(2,2,1^-,0)$, then let $v_1,v_2$ be the 2-neighbors along the two 2-paths and $u_1,u_2$ the other two neighbors. Since $d(v_1)=d(v_2)=6\leq \Delta$, by \Cref{counting observation}, $d^*(v)\geq 10$. Moreover, $d^*(v)=d(u_1)+d(u_2)+4$ so $d(u_1)+d(u_2)\geq 6$. Due to \Cref{2-path lemma} and by \ru2(ii), $v$ also receives $\frac12$ from each incident 2-path bridge. 

If $d(u_1)=2$, then $d(u_2)\geq 4$. So, $$\mu^*(v)\geq 10-2\cdot 4 - 2 + 2\cdot \frac12 = 1.$$ 
If $d(u_1)\geq 3$ and $d(u_2)\geq 3$, then at worst, $$\mu^*(v)\geq 10-2\cdot 4 - 2\cdot 1 + 2\cdot\frac12 = 1.$$

\item If $v$ is a $(2,1^-,1^-,0)$, then at worst $$\mu^*(v)\geq 10-4-2\cdot 2 - 1=1.$$

\item If $v$ is a $(1^-,1^-,1^-,1^-)$, then at worst $$\mu^*(v)\geq 10-4\cdot 2 =2.$$
\end{itemize}

\item $d(v)=5$.\\
Vertex $v$ may give 4 (resp. 2, $\frac52$, 1, $\frac12$, $\frac12$) by \ru0(i) in the case of a 2-path (resp. \ru0(i) in the case of a 1-path, \ru1(ii), \ru1(iii) in the case of a (1,1,0)-neighbor, \ru1(iii) in the case of a (2,0,0)-neighbor, \ru2(i)). Recall $\mu(v)=17$.

If $v$ is a $(2,2,2,2,0^+)$, then let $v_1,v_2,v_3,v_4$ be the 2-neighbors along the four 2-paths and $u$ the last neighbor. Since $d(v_i)=7\leq \Delta$ for all $1\leq i\leq 4$, by \Cref{counting observation}, $d^*(v)\geq 12$. Moreover, $d^*(v)=d(u)+8$ so $d(u)\geq 4$. Finally, 
$$\mu^*(v)\ge 17 - 4\cdot 4 -\frac12 =\frac12.$$ 

If $v$ is a $(2,2^-,2^-,1^-,1^-)$, then $v$ may give at most 4, 4, 4, $\frac52$, $\frac52$ along incident edges ; so $$\mu^*(v)\ge 17 - 3\cdot 4 - 2\cdot\frac52 = 0.$$

If $v$ is a $(1^-,1^-,1^-,1^-,1^-)$, then $v$ may give at most $\frac52$ along each incident edge ; so $$\mu^*(v)\ge 17 - 5\cdot\frac52 = \frac92 .$$

\item $d(v)=6$.\\
Observe that $v$ never gives away more than 4 along any edge. Indeed, it may give 4 (resp. 3, 4, $\frac52$, 1, $\frac12$, $\frac12$) by \ru0(i) in the case of a 2-path (resp. \ru0(i) and \ru0(iii) in the case of a 1-path, \ru1(i), \ru1(ii), \ru1(iii) in the case of a (1,1,0)-neighbor, \ru1(iii) in the case of a (2,0,0)-neighbor, \ru2(i)). Recall that $\mu(v)=24$. So, at worst we have 
$$\mu^*(v)\ge 24 - 6\cdot 4 = 0.$$

\item $d(v) = 7$.\\
Observe that every rule except for \ru1(iv) and \ru2(ii) may apply to $v$ and recall that $\mu(v)=31$. Observe that, when $v$ is not a sponsor, the largest amount of charge that $v$ can send away along an edge is $\frac92$ which only happens in the case of a $2$-path bridge by \ru0(i) and \ru0(iv).
\begin{itemize}
\item Suppose that $v$ is not incident to a $3$-path. The case where $v$ is incident to seven $2$-path bridges is impossible due to \Cref{weird cases lemma}. It follows that,
$$ \mu^*(v)\geq 31 - 6\cdot\frac92 - 4 = 0.$$
\item Suppose that $v$ is incident to a $3$-path, then:
\begin{itemize}

\item Suppose that $v$ is not a sponsor. Then, $v$ gives only $4$ to a medium $2$-neighbor on a $3$-path and nothing to the small 2-neighbor at distance 2. Due to \Cref{weird cases lemma}, $v$ cannot be also incident to six $2$-path bridges. So, at worst, we have
$$ \mu^*(v)\geq 31 - 4 - 5\cdot\frac92 - 4 = \frac12.$$

\item Suppose that $v$ is a sponsor. Then, by definition, $v$ is incident to a unique $3$-path. So, $v$ gives 4 to its medium $2$-neighbor and 4 to its small $2$-neighbor at distance 2, which is a total of 8 that $v$ sends away along the 3-path. 

Suppose that $v$ is not incident to any 2-path bridge. Observe that $v$ gives 4 to a neighbor only by \ru0(i) in the case of a 2-path and \ru1(i) in the case of a $(2,2,0)$-neighbor. By \Cref{sponsor observation} and \Cref{sponsor lemma1}, a sponsor has a neighbor different from a $(2,2,0)$-vertex, a 2-vertex on a 2-path and a 2-vertex on a 3-path. In other words, $v$ gives less than 4 to a vertex, which is at most 3 (by \ru0(i) and \ru0(iii) in the case of a 1-path bridge and less than 3 in the other cases). Thus, at worst, we have
$$ \mu^*(v)\geq 31 - 8 - 3 - 5\cdot4 = 0.$$

Suppose that $v$ is incident to a 2-path bridge. Vertex $v$ is incident to at most two 2-path bridges due to \Cref{sponsor observation} and \Cref{sponsor lemma0} ($v$ gives away $\frac92$ along each of these 2-path bridges). Due to \Cref{sponsor observation} and \Cref{sponsor lemma1}, at least one of $v$'s neighbors is not a $2$-vertex belonging to a $2$-path or a $3$-path, nor a $(2,2,0)$-vertex. So $v$ gives less than 4 to at least one neighbor.

If $v$ gives less than 4 to two neighbors, then we have
$$ \mu^*(v)\geq 31 - 8 - 2\cdot\frac92 - 2\cdot3 - 2\cdot4 = 0.$$
So $v$ gives less than 4 to exactly one neighbor. If that amount is at most 2, then at worst, we have 
$$ \mu^*(v)\geq 31 - 8 - 2\cdot\frac92 - 2 - 3\cdot4 = 0.$$
So that amount must be more than 2, so it must be $\frac52$ by \ru1(ii) in the case of a $(2,1,0)$-neighbor, or 3 by \ru0(i) and \ru0(iii) in the case of a 1-path bridge. Both of these cases cannot occur by \Cref{sponsor lemma2}.

\end{itemize}
\end{itemize}

\end{enumerate}

\subsection{Proving the non-existence of $G$.}

In the previous section, we have proven that every vertex has a non-negative amount of charge after the discharging procedure. Since the discharging rules preserve the total amount of charge and the total is non-positive by \Cref{equation}, every vertex must have exactly charge 0. Consequently, we have the following:
\begin{itemize}
\item There are no 3-paths since the endvertex of a 3-path that is not a sponsor always has at least charge $\frac12$ due to \textbf{Case 6} in \Cref{verification}.
\item There are no 7-vertices. Indeed, since there are no 3-paths, a 7-vertex $v$, with final charge 0, must be incident to six $2$-path bridges (where $v$ gives away $\frac92$ along each path) and be adjacent to a $(2,2,0)$-vertex or a $2$-vertex belonging to a $2$-path (where $v$ gives 4 in each case). The former is impossible due to \Cref{weird cases lemma}. In the latter, the endvertex $u$ of the 2-path, which is not a bridge, must be a $7$-vertex by \Cref{weird cases lemma}. Since $u$ also has charge 0, $u$ must also be incident to six 2-path bridges, which is not possible due to \Cref{weird cases lemma3}.

\item There are no 4-vertices or 5-vertices. Indeed, since there are no 7-vertices, due to \Cref{2-path lemma}, a $4$-vertex or $5$-vertex cannot be incident to a 2-path. So by \textbf{Case 3} and \textbf{Case 4} in \Cref{verification}, they always have at least charge 2.
\item There are no 6-vertices. Indeed, by \textbf{Case 5} of \Cref{verification}, a 6-vertex, with final charge 0, must give 4 to each of its neighbors. In other words, its neighbors must be $(2,2,0)$-vertices (which is impossible by \Cref{2-path lemma} and the fact that we have no 7-vertices) or $2$-vertices belonging to $2$-paths (where the other endvertices are $6$-vertices for the same reason). However, by \Cref{weird cases lemma2}, we cannot have a $6$-vertex that is incident to six $2$-paths with $6$-endvertices.

\item There are no 3-vertices. Indeed, if we take a closer look at \textbf{Case 2} of \Cref{verification}, we can observe the following:
\begin{itemize}
\item There are no 3-vertices incident to a $2$-path by \Cref{2-path lemma} and the fact that we have no $7$-vertices.
\item There are no $(1,1,1)$-vertices since the other endvertices of the 1-paths are $6^+$-vertices and we have no $6^+$-vertices.
\item There are no $(1,1,0)$-vertices. Indeed, the $3^+$-neighbor must be a $3$-vertex since there are no $4^+$-vertex, and as a result, the $(1,1,0)$-vertex has at least charge 1 left.
\item There are no $(1^-,0,0)$-vertex since they always have at least charge 1 left.  
\end{itemize} 
\end{itemize}
Finally, $G$ has only $2$-vertices so $G$ must be a cycle which is $2$-distance $8$-colorable. That completes the proof of \Cref{theorem}.

\bibliographystyle{plain}

\end{document}